\DeclareMathAlphabet{\mathpzc}{OT1}{pzc}{m}{it}
\newtheorem{thm}{Theorem}[section]
\newtheorem{lem}[thm]{Lemma}
\newtheorem{prop}[thm]{Proposition} 
\newtheorem{cor}[thm]{Corollary}
\newtheorem{rem}[thm]{Remark}
\newtheorem{ex}[thm]{Example}
\newcommand{\rar}{\rightarrow}
\newcommand{\m}{\mathpzc{m}}
\newcommand{\n}{\mathpzc{n}}
\newcommand{\p}{\mathpzc{p}}
\newcommand{\q}{\mathpzc{q}}
\newcommand{\bQ}{\mathbb Q}
\newcommand{\bR}{\mathbb R}
\newcommand{\bC}{\mathbb C}
\newcommand{\bN}{\mathbb N}
\newcommand{\A}{\mathbb A}
\newcommand{\Exp}{\operatorname{Exp}}
\newcommand{\hgt}{\operatorname{ht}}
\newcommand{\td}{\operatorname{tr.deg}}
\title{Some results on retracts of polynomial rings}
\author{Sagnik Chakraborty$^*$, Nikhilesh Dasgupta$^{**}$, Amartya Kumar Dutta$^{\$}$ \\
and Neena Gupta$^\dagger$\\
{\small{\it $^*$Department of Mathematics, Ramakrishna Mission Vivekananda Educational and Research Institute,}}\\
{\small{\it G. T. Road, P.O. Belur Math, Howrah, Kolkata - 711 202, India.}}\\
{\small{\it e-mail: jusagnik28@gmail.com}}\\
{\small{\it $^{**}$, $^{\$}$ and $^\dagger$ Stat-Math Unit, Indian Statistical Institute,}}\\
{\small{\it 203 B.T. Road, Kolkata 700 108, India.}}\\
{\small{\it e-mail : $^{**}$its.nikhilesh@gmail.com, $^{\$}$amartya.28@gmail.com, $^\dagger$ neenag@isical.ac.in}}\\
}
\begin{document}
\date{}
\maketitle
\abstract{In this paper, we first consider the relationship between a polynomial ring $B$ 
over a Noetherian domain $R$ and the ring of invariants $A$ of a ${\mathbb G}_a$-action on $B$, when $A$ occurs as a retract of $B$.
Next, we study  retracts of a polynomial ring in general and 
address the questions of D. L. Costa raised in \cite{C}. 
Finally, we examine the behaviour of ideals and certain properties of rings under retractions.
\smallskip

\noindent
{\small {{\bf Keywords}. Retract, polynomial ring, locally nilpotent derivation, 
${\mathbb G}_a$-action, exponential map, $\A^2$-fibration.}

\noindent
{\small {{\bf 2010 MSC}. Primary: 13B25; Secondary: 14A10, 14R25, 13N15.}}
}}

\section{Introduction}
Let $R\subseteq A \subseteq B$ be commutative rings. 
The ring $A$ is said to be an $R$-algebra retract of $B$
if there exists an $R$-algebra homomorphism $\pi: B\to B$
such that $\pi^2=\pi$ and $\pi(B)= A$. 

Now suppose $R$ is a Noetherian domain containing $\bQ$, 
$B:= R[X_1,X_2, \dots, X_n]$, a polynomial ring in $n$ variables over $R$,
and $A$ is the kernel of a non-zero locally nilpotent derivation $D$ on $B$.
It is well-known that $B_a=A_a[T]$ for some  $a (\ne 0) \in A$ and $T \in B$ transcendental over A (Lemma \ref{lnd}(iii)).
One investigates the structure of $B$ over $A$ and conditions under which $B$ itself is isomorphic to the polynomial algebra over $A$.
For instance, in \cite{BD2}, it is shown that when $R$ is a Noetherian domain containing $\bQ$, $B=R[X_1,X_2]$ and $(DX_1, DX_2)B=B$
then $B=A[T]$ for some $T \in B$.  In this paper, we investigate the above problem when the kernel $A$ occurs as a retract of $B$.
More generally, we consider the ring of invariants $A$ of any exponential map $\phi$ 
(the ring-theoretic version of a ${\mathbb G}_a$-action, defined in Section 2)
on the polynomial ring $B$ over any Noetherian domain $R$.
We prove (Theorem \ref{rlnd1}) that {\it when $R$ is a Noetherian normal domain  and 
 $ B=R[X_1, \dots,X_n]$, then $B$, as an  $A$-algebra, is  isomorphic to the symmetric algebra of $IA$ for some invertible ideal $I$ of $R$}.
As a step to Theorem \ref{rlnd1}, we first show (Proposition \ref{ufdp}) that {\it when $R$ is a UFD then $B= A[T]$ for some $T \in A$}.
For the convenience of readers who are more comfortable with the language of locally nilpotent derivations,
the corresponding results for the latter have also been stated separately (Corollaries \ref{ufdpc} and \ref{rlnd1c}).
The proof of Theorem \ref{rlnd1} involves a new result (Lemma \ref{patch}) on the concept of $\A^1$-patch that was formally defined in 
\cite[Definition 2.1]{DGO}. This result is a variant of the patching result \cite[Lemma 3.1]{BD2} of Bhatwadekar-Dutta.

\smallskip

In this paper we also revisit the questions of D. L. Costa (\cite[Section 4]{C}) on 
$R$-algebra retracts of polynomial rings $R[X_1, \dots, X_n]$ (see Section 5). 
We show  that {\it any retract  of $R[X_1, \dots, X_n]$ of transcendence degree one over a Noetherian domain $R$ is
an $\A^1$-fibration over $R$} (Theorem \ref{ctd1}) and that {\it any retract of $R[X_1, \dots, X_n]$ of transcendence degree 
two over a Noetherian domain $R$ containing $\bQ$ is an $\A^2$-fibration over $R$} (Theorem \ref{td2n}). 
We shall discuss the implications of these results in the light of some well-known results and examples.

We observe that {\it if $k$ is a field of characteristic zero, then any retract of 
$k[X_1, X_2, X_3]$ is a polynomial ring} (Theorem \ref{ftd2}). This result follows from a characterization of polynomial
subrings of $k[X_1, X_2, X_3]$ by Miyanishi, Sugie and Fujita (Theorem \ref{r}) and has been independently observed by T. Nagamine (\cite{T}). 
On the other hand, over any field $k$ of positive characteristic,
the counterexamples to the Zariski Cancellation Problem by the fourth author in \cite{G} and \cite{G2} show that
when $n \ge 4$, retracts of $k[X_1,\dots,X_n]$ need not be polynomial rings.

We shall prove that over any field $k$, {\it a retract $A$ of $B:=k[X_1,\dots,X_n]$ 
is again a polynomial ring over $k$, provided $A$ is a graded subring 
of $B$ and the irrelevant maximal ideal of $B$ remains invariant under the retraction} 
(see Theorem \ref{gradedr} for a more general statement over an integral domain $R$). 

The paper also records some general results on retracts in the spirit of Costa's results in \cite{C}, 
including results on properties of rings and ideals preserved under retractions. Some of these results give conditions
for a ring to be faithfully flat over its retract. 
One of the results (Theorem \ref{complocalt}) gives an analogue of Theorem \ref{gradedr} for retracts of a complete 
equicharacteristic regular local ring.

We now discuss the layout of the paper. In Section 2, we set up the notation and recall a few definitions and  known results. 
In Section 3, we prove a new result on $\A^1$-patch and 
in Section 4, we study the ring of invariants of a ${\mathbb G}_a$-action on a polynomial ring which is also a retract
of the polynomial ring.
In Section 5, we discuss the Questions of Costa and
in Section 6, we record a few miscellaneous results on retracts.

\section{Preliminaries}

{\bf \textsl{Notation:}}

By a ring, we will mean a commutative ring with unity. We denote the group of units of a ring $R$ by $R^{*}$. 
For a ring $R$ and a non-zerodivisor $f \in R$, we use $R_f$ to denote the localisation of $R$ with respect 
to the multiplicatively closed set $\{1,f,f^2,\dots\}$. We denote the field of fractions of an integral domain $R$ by 
    $Q(R)$. {\bf  The notation $k$ will always denote a field}.
     
\smallskip
\noindent

Let $A \subseteq B$ be integral domains. Then the 
transcendence degree of the field of fractions of $B$ over that of $A$ is denoted by $\td_{A}{B}$. 
For a ring $R$ and a prime ideal $\p$ of $R$, $\kappa(\p)$ denotes the 
residue field of the local ring $R_{\p}$; and if $A$ is an $R$-algebra, we use $A_\p$ to denote the ring
$S^{-1}A$, where $S:= R\setminus\p$. For an $R$-module $M$, ${\rm Sym}_R(M)$ denotes the symmetric algebra of $M$.
\smallskip
\noindent

An ${\bN}$-{\it graded ring} $R$ is a ring together with a 
direct sum decomposition of $R$ as an additive group $R=\bigoplus_{i \in {\bN}}R_{i}$ such that  
$R_{i}R_{j} \subseteq R_{i+j}$ for all $i,j$. A non-zero element $r \in R$ is said to be {\it homogeneous} if $r \in R_{i}$ 
for some $i \in {\bN}$ and $i$ is called the {\it degree} of $r$. 
The ideal of $R$ generated by the homogeneous elements of positive degree is 
called the {\it irrelevant ideal} and is denoted by $R_{+}$. Note that $R=R_0\oplus R_{+}$.

\smallskip
\noindent

Let $R$ be a ring and $n$ a positive integer. For an $R$-algebra $A$, we use the notation $A=R^{[n]}$ to denote that $A$ is 
isomorphic to a polynomial ring in $n$ variables over $R$ and the notation $A=R^{[[n]]}$ to denote that $A$ is isomorphic to 
a power series ring in $n$ indeterminates over $R$. 
\medskip

\noindent
{\bf \textsl{Definitions:}}

A subring $A$ of a ring $B$ is said to be a {\it retract} of $B$ if there exists an idempotent endomorphism 
$\pi : B \rightarrow B$ such that $\pi(B)=A$. The map $\pi$ is called a {\it retraction}.

\smallskip

A finitely generated flat $R$-algebra $A$ is called an {\it ${\A}^{n}$-fibration over $R$} if,
for each prime ideal $\p$ of $R$, $ A \otimes_{R} \kappa(\p) ={\kappa(\p)}^{[n]}  $.

\smallskip

A derivation $D$ on a ring $B$ is said to be {\it locally nilpotent} if, for each $b \in B$, there exists a positive integer 
$n$ (depending on $b$) such that $D^{n}(b)=0$. When $B$ is an $R$-algebra, we denote the set of locally nilpotent 
$R$-derivations of $B$ by $LND_{R}(B)$. The {\it kernel} of a locally nilpotent derivation $D$ is denoted by $Ker ~ D$.

\smallskip

Let $R$ be a ring and $\phi: B \to B^{[1]}$ be an $R$-algebra homomorphism. For an indeterminate $U$ over $B$, let 
$\phi_U$ denote the map $\phi: B \to B[U]$.  Then
$\phi$ is said to be an { \it exponential map} on $B$ if $\phi$ satisfies the following two properties:
\begin{enumerate}
\item [\rm (i)] $\varepsilon_0 \phi_U$ is identity on $B$, where 
$\varepsilon_0: B[U] \to B$ is the evaluation at $U = 0$.
\item[\rm (ii)] $\phi_V \phi_U = \phi_{V+U}$, where 
$\phi_V: B \to B[V]$ is extended to a homomorphism 
$\phi_V: B[U] \to B[V,U]$ by  setting $\phi_V(U)= U$.
\end{enumerate}
We denote the ring of invariants of $\phi$, i.e., the subring $\{a \in B\,| \,\phi (b) = b\}$ of $B$ by $B^{\phi}$ and  
the set of all $R$-algebra exponential maps on $B$ by ${\rm Exp}_R(B)$. 

If $R$ contains $\bQ$, then any locally nilpotent derivation $D$ on $B$ gives
rise to an exponential map $\phi: B \to B[T]$ defined by  
$$
\phi:=\sum_{n \ge o}\frac{D^n}{n!}T^n
$$
and conversely, any exponential map of $B$ is of the above form for some locally nilpotent derivation.
For instance, if $B=k[x]=k^{[1]}$, then the derivation $\frac{\partial}{\partial x}$ induces the exponential map 
$\phi: B \to B[T]$ defined by $\phi (x)=x+T$.
\smallskip

A subring $A$ of $B$ is said to be {\it factorially closed} in 
$B$ if, for all $a,b \in B$, $ab \in A\setminus \{0\}$ implies $a,b \in A$.
\medskip

\noindent
{\bf \textsl{{Preliminary results:}}}

\smallskip
We first recall a few important properties of retracts recorded by Costa in \cite{C}.

\begin{lem}\label{ct}
Let $A$ be a subring of the integral domain $B$ such that $A$ is a retract of $B$. Then the following statements hold:
\begin{enumerate}
 \item [\rm (i)] If $B$ is an integral domain, then $A$ is algebraically closed in $B$ {\em (\cite[1.3]{C})}.
 \item[\rm (ii)]If $C$ is an $A$-algebra, then $C=A \otimes_{A} C$ is a retract of $B \otimes_{A} C$. In particular, if 
 $S$ is a multiplicatively closed subset of $A$ then $S^{-1}A$ is a retract of $S^{-1}B$; and if $Q$ is an ideal of $A$, then 
 $\frac{A}{Q}$ is a retract of $\frac{B}{QB}$ {\em (\cite[1.9]{C})}.
 \item[\rm (iii)] If $B$ is Noetherian then $A$ is Noetherian {\em (\cite[1.2]{C})}. 
If $B$ satisfies the ascending chain condition on principal ideals then so does $A$ {\em (\cite[1.8]{C})}.
 \item [\rm (iv)]If $B$ is a UFD, then so is $A$ {\em (\cite[1.8]{C})}.
 \item[\rm (v)]If $B$ is regular, then so is $A$ {\em (\cite[1.11]{C})}.   
\item[\rm (vi)]If $B$ is normal, then so is $A$ {\em (\cite[1.6]{C})}.
\end{enumerate}
\end{lem}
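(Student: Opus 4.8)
The plan is to work throughout with the retraction $\pi\colon B\to B$ satisfying $\pi^2=\pi$ and $\pi(B)=A$, together with the inclusion $\iota\colon A\hookrightarrow B$. Since every $a\in A$ has the form $a=\pi(b)$, we get $\pi(a)=\pi^2(b)=\pi(b)=a$, so $\pi|_A=\mathrm{id}_A$ and $\pi\colon B\to A$ is a surjective ring homomorphism with $A\cong B/\ker\pi$. I would first record a \emph{key identity} that drives almost everything:
\[
aB\cap A=aA\quad(a\in A),\qquad\text{and}\qquad Q(A)\cap B=A\ \text{inside }Q(B).
\]
Both come from applying $\pi$: if $x=ab\in A$ with $b\in B$ then $x=\pi(x)=a\,\pi(b)\in aA$; and if $x=a_1/a_2\in Q(A)\cap B$ then $a_2x=a_1\in A$ gives $a_2\pi(x)=a_1=a_2x$, whence $\pi(x)=x\in A$ since $B$ is a domain. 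In particular, divisibility between elements of $A$ is the same whether tested in $A$ or in $B$.

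The formal parts then follow quickly. For (ii), the map $\pi\otimes\mathrm{id}_C$ on $B\otimes_A C$ is idempotent with image $A\otimes_A C=C$, and the $A$-module splitting $B=A\oplus\ker\pi$ shows $C\hookrightarrow B\otimes_A C$; the two displayed special cases are $C=S^{-1}A$ and $C=A/Q$. For (iii), $A\cong B/\ker\pi$ is Noetherian when $B$ is, while for the chain condition a chain $a_1A\subseteq a_2A\subseteq\cdots$ pushes forward to $a_1B\subseteq a_2B\subseteq\cdots$, which stabilizes, and the key identity pulls the stabilization back to $A$. For (vi), an $x\in Q(A)$ integral over $A$ is integral over $B$, hence lies in $B$ by normality, hence in $Q(A)\cap B=A$. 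For (i) I would invoke (ii): with $S:=A\setminus\{0\}$ and $K:=Q(A)$, $K=S^{-1}A$ is a retract of the domain $S^{-1}B$; if $b\in B$ is algebraic over $K$ then $K[b]\subseteq S^{-1}B$ is a domain finite over the field $K$, hence a field, so the induced $K$-algebra map $K[b]\to K$ is injective and forces $K[b]=K$, giving $b\in Q(A)\cap B=A$.

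For (iv) I would show $A$ is a GCD domain and then apply (iii). Given $x,y\in A$, let $d=\gcd_B(x,y)$ in the UFD $B$; applying $\pi$ to $x=dx'$ and $y=dy'$ shows $\pi(d)$ is a common divisor of $x,y$ in $A$, and if $c\in A$ is any common divisor then $c\mid d$ in $B$ yields $c\mid\pi(d)$ in $A$ by the key identity. Hence $\pi(d)=\gcd_A(x,y)$ exists, so $A$ is a GCD domain; combined with the ascending chain condition on principal ideals from (iii), $A$ is a UFD.

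The main obstacle is (v). Having $A$ Noetherian, I reduce to showing each $A_{\mathfrak q}$ is regular local. Setting $\mathfrak P:=\pi^{-1}(\mathfrak q)$, one checks $\mathfrak P\cap A=\mathfrak q$ and that $\pi$ and $\iota$ localize to exhibit $A_{\mathfrak q}$ as a retract of the regular local ring $B_{\mathfrak P}$; write this as $(C,\mathfrak n)\xrightarrow{\iota}(D,\mathfrak m)\xrightarrow{\pi}(C,\mathfrak n)$ with $\pi\iota=\mathrm{id}$, both local homomorphisms. Passing to associated graded rings, $\gr_{\mathfrak n}C$ becomes a retract of $\gr_{\mathfrak m}D$, and $\gr_{\mathfrak m}D$ is a polynomial ring $k[\bar x_1,\dots,\bar x_d]$ since $D$ is regular. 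Because $\gr(\pi)\gr(\iota)=\mathrm{id}$, the map $\gr(\iota)$ is a split injection whose image is generated by the images of a basis of $\mathfrak n/\mathfrak n^2$, i.e.\ by finitely many \emph{linearly independent} linear forms; such forms are algebraically independent, so $\gr_{\mathfrak n}C$ is itself a polynomial ring over $k$. A Noetherian local ring whose associated graded ring is a polynomial ring over its residue field is regular, so $C=A_{\mathfrak q}$ is regular. I expect this graded-ring analysis — the injectivity of $\gr(\iota)$ and the identification of its image with a polynomial ring on linear forms — to be the technical heart of the lemma, the other parts being formal consequences of the key identity.
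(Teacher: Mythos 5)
Your proof is correct, but note that the paper itself contains no proof of Lemma \ref{ct}: it is a compendium of results quoted verbatim from Costa's paper \cite{C} (items 1.2, 1.3, 1.6, 1.8, 1.9, 1.11 there), so your argument can only be compared with the classical proofs, not with anything in this text. Measured against those, your write-up is a sound, self-contained reconstruction, organized around the two identities $aB\cap A=aA$ and $Q(A)\cap B=A$, which do indeed drive the descent of ACCP, divisibility and normality; your observation that the $A$-module splitting $B=A\oplus\ker\pi$ is what makes $\pi\otimes\operatorname{id}_C$ a well-defined idempotent ring endomorphism \emph{and} guarantees injectivity of $C\to B\otimes_A C$ handles a point that is often glossed over. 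For (iv), the route via GCD domain plus ACCP is a clean and standard way to get UFD descent; for (i), your passage to $Q(A)$ via (ii) is valid and involves no circularity, though the classical argument is even more elementary (from a relation $\sum_i a_ib^i=0$ of minimal degree one gets $(b-\pi(b))\,g(b)=0$ with $g$ of smaller degree and leading coefficient $a_n\neq 0$, forcing $b=\pi(b)\in A$ in the domain $B$). Part (v) is, as you anticipated, the real content, and your associated-graded argument is the standard proof of Costa's 1.11: the reductions are all correct ($\mathfrak{P}=\pi^{-1}(\mathfrak{q})$ satisfies $\mathfrak{P}\cap A=\mathfrak{q}$, both induced maps $A_{\mathfrak{q}}\to B_{\mathfrak{P}}\to A_{\mathfrak{q}}$ are local, $\gr$ is functorial so $\gr(\iota)$ is split injective). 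Two details deserve to be made explicit to close the argument: first, that $\gr_{\mathfrak{n}}C$ is generated in degree one over $C/\mathfrak{n}$ (true of the associated graded ring of any local ring at its maximal ideal), which is what identifies the image of $\gr(\iota)$ with the subalgebra of $k[\bar x_1,\dots,\bar x_d]$ generated by the $e=\dim_k(\mathfrak{n}/\mathfrak{n}^2)$ linearly independent linear forms $\gr(\iota)_1(\mathfrak{n}/\mathfrak{n}^2)$; and second, that the resulting graded isomorphism $\gr_{\mathfrak{n}}C\cong k^{[e]}$ with generators in degree one yields $\dim C=\dim\gr_{\mathfrak{n}}C=e=\dim_k(\mathfrak{n}/\mathfrak{n}^2)$, which is the precise form of the criterion ``associated graded polynomial implies regular'' that you invoke. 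With those two sentences added, the proof is complete and uniform in all characteristics.
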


The following theorem characterizes retracts of polynomial rings over a UFD in some special cases 
(\cite[Theorem 3.5 and subsequent Remark]{C}).

\begin{thm}\label{ct2}
Let $R$ be a UFD and $A$ a retract of $B=R[X_{1}, X_{2}, \dots ,X_{n}](=R^{[n]})$. 
\begin{enumerate}
 \item [\rm (i)]If $\td_{R}A=0$, then $A=R$. 
 \item [\rm (ii)]If $\td_{R}A=1$, then $A=R^{[1]}$.
 \item [\rm (iii)]If $\td_{R}A=n$, then $A=B$.
 \end{enumerate}
\end{thm}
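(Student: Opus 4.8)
The plan is to treat the two extreme cases directly and to reserve all the work for $\td_R A = 1$. Throughout, the ambient facts I would use are that $A$ is algebraically closed in $B$ (Lemma \ref{ct}(i)) and that $R$ is algebraically closed in $B = R^{[n]}$, as always holds for a polynomial ring over a domain. For (i): if $\td_R A = 0$ then every element of $A$ is algebraic over $R$, hence lies in the algebraic closure of $R$ in $B$, which is $R$; thus $A = R$. For (iii): additivity of transcendence degree gives $\td_A B = \td_R B - \td_R A = n - n = 0$, so $B$ is algebraic over $A$, and since $A$ is algebraically closed in $B$ we get $A = B$.

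For (ii) I would first extract the structural data from a retraction $\pi \colon B \to B$ with $\pi(B) = A$. Since $\pi$ is an $R$-algebra homomorphism, $A = R[\pi(X_1), \dots, \pi(X_n)]$ is a finitely generated $R$-algebra; it is a UFD (Lemma \ref{ct}(iv)) and normal (Lemma \ref{ct}(vi)); and $A^{*} = R^{*}$, because $A^{*} \subseteq B^{*} = R^{*}$. As $\pi$ is an $R$-linear idempotent, $B = A \oplus \ker \pi$ as $R$-modules, so $A$ is a direct summand of the free module $B$ and hence $R$-flat. Composing the inclusion $A \hookrightarrow B$ with the augmentation $B \to R$, $X_i \mapsto 0$, yields an $R$-algebra retraction $\epsilon \colon A \to R$ whose kernel $I$ satisfies $A/I \cong R$; in particular $I$ is prime and $I \cap R = 0$. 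Localizing at $S = R \setminus \{0\}$ and writing $K = Q(R)$, $A_K := S^{-1}A$, one checks $\hgt I = 1$: a prime strictly between $0$ and $I$ either meets $S$ (impossible, as it would force a nonzero element of $R$ into $I$) or survives to give a length-two chain below the maximal ideal $S^{-1}I$ of the one-dimensional ring $A_K$. Since $A$ is a UFD, $I = tA$ for a prime element $t$, and $t$ is transcendental over $R$ (it lies in $A \setminus R$, and $R$ is algebraically closed in $A$).

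The heart of the matter is the field case: a retract of $K^{[n]}$ of transcendence degree one over a field $K$ is $K^{[1]}$. I would prove this and then apply it to $A_K$, which by base change (Lemma \ref{ct}(ii)) is a retract of $K^{[n]}$ with $\td_K A_K = 1$. Here $A_K$ is a normal, one-dimensional, finitely generated UFD over $K$, hence a PID, so $\Sp A_K$ is a smooth affine curve $C$. Because $A_K$ is a subring of $K^{[n]}$, its fraction field is a transcendence-degree-one subfield of $K(X_1, \dots, X_n)$ in which $K$ is algebraically closed; by L\"uroth's theorem it is rational, so the smooth completion of $C$ is $\mathbb{P}^1_K$. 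Writing $C$ as $\mathbb{P}^1_K$ minus finitely many closed points, the vanishing of $\operatorname{Pic}(A_K)$ (UFD) and the equality $A_K^{*} = K^{*}$ together force exactly one rational point to be removed, so $C \cong \A^1_K$ and $A_K = K^{[1]}$.

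It remains to descend from $A_K = K^{[1]}$ to $A = R^{[1]}$ using the element $t$. Base-changing $I = tA$ shows $tA_K$ is the augmentation ideal of $A_K$, which is generated by a degree-one element; hence $t$ generates $A_K$, i.e. $A_K = K[t]$ and $A \subseteq K[t]$. Every $a \in A$ then has a finite expansion $a = \sum_{i=0}^{d} c_i t^i$ with $c_i \in K$, and I would show $a \in R[t]$ by induction on $d$: the constant term is $c_0 = \epsilon(a) \in R$, and $a - c_0 \in I = tA$ writes as $t a'$ with $a' \in A$ of $t$-degree $d - 1$, so $a' \in R[t]$ by induction. Therefore $A = R[t] = R^{[1]}$. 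I expect the field case to be the genuine obstacle --- in particular deducing rationality of the function field from the retract hypothesis and then identifying the curve as $\A^1$ --- whereas the descent from $K$ to $R$ is a clean induction once the single generator $t$ has been produced.
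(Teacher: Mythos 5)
You should know at the outset that the paper contains no proof of Theorem \ref{ct2}: it is imported verbatim from Costa (\cite[Theorem 3.5 and subsequent Remark]{C}), and the only internal commentary is the observation before Theorem \ref{ctd1} that the transcendence degree $0$ and $n$ cases follow from Lemma \ref{ct}(i) --- exactly your parts (i) and (iii), which are correct. Your proof of the substantive part (ii) is also correct, and it has the same overall shape as Costa's original argument: localize at $R\setminus\{0\}$ to reduce to a retract $A_K$ of $K^{[n]}$ (Lemma \ref{ct}(ii)), identify $\Sp A_K$ as a regular affine curve with rational function field, trivial Picard group and units $K^{*}$, hence as $\A^1_K$, and then descend by factoriality; your descent --- the kernel $I$ of the augmentation $\epsilon:A\to R$ has height one, is principal, $I=tA$, by Lemma \ref{ct}(iv) (the only place the UFD hypothesis on $R$ enters), $tA_K$ is the augmentation ideal so $A_K=K[t]$, and the $t$-adic induction gives $A=R[t]$ --- is complete and clean. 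Two precision points. First, the ``L\"uroth'' you invoke is really Igusa's generalization (a transcendence-degree-one subfield of $K(X_1,\dots,X_n)$ for arbitrary $n$ is purely transcendental, in all characteristics); this is genuinely stronger than the classical one-variable theorem and is the one nontrivial external input --- Costa's proof needs it too --- so it should be cited as such. Second, over an imperfect field ``smooth'' should read ``regular'' ($A_K$ is a one-dimensional affine UFD, hence a PID), which is harmless: once Igusa--L\"uroth gives $Q(A_K)=K(f)$, the regular projective model is $\mathbb{P}^1_K$ and your count ($r$ punctures of degrees $d_i$ give $A_K^{*}/K^{*}\cong\mathbb{Z}^{r-1}$ and $\operatorname{Pic}\cong\mathbb{Z}/\gcd(d_i)\mathbb{Z}$) correctly forces a single rational puncture. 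Minor remarks: $A_K^{*}=K^{*}$ is immediate from $A_K\subseteq K[X_1,\dots,X_n]$; once $R\subseteq A$, any retraction onto $A$ automatically fixes $R$, so your $R$-linearity assumption costs nothing; and an alternative descent available inside this paper would be Theorem \ref{rsdd} applied with the retraction $\epsilon:A\to R$, much as the authors use it in Proposition \ref{ufdp}.
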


We now state an elementary result on symmetric algebras (\cite[Lemma 3]{EH}).

\begin{lem}\label{eh}
Let $R$ be a ring and $M,N$ be finitely generated $R$-modules. Then the following statements are equivalent:
\begin{enumerate}
\item[\rm(I)] $M \cong N$ as $R$-modules.
\item[\rm(II)] ${\rm Sym}_R(M) \cong {\rm Sym}_R (N)$ as $R$-algebras.
\end{enumerate} 
\end{lem}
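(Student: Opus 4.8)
The implication (I) $\Rightarrow$ (II) is immediate and requires no hypothesis: the assignment $M \mapsto {\rm Sym}_R(M)$ is functorial, so an $R$-module isomorphism $M \xrightarrow{\sim} N$ induces an $R$-algebra isomorphism ${\rm Sym}_R(M) \xrightarrow{\sim} {\rm Sym}_R(N)$, with inverse induced by the inverse module map. The real content is the reverse implication (II) $\Rightarrow$ (I), and my plan is to recover the module $M$ \emph{intrinsically} from the $R$-algebra ${\rm Sym}_R(M)$, in a manner insensitive to the grading, so that it can be transported along an arbitrary $R$-algebra isomorphism.

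The natural invariant to use is the conormal module of an augmentation. Writing $S := {\rm Sym}_R(M)$, let $\varepsilon_M : S \to R$ be the standard augmentation (killing all elements of positive degree) and $J_M := \ker \varepsilon_M = S_+$ its kernel. A direct degree count gives $J_M^2 = \bigoplus_{i \ge 2}{\rm Sym}^i_R(M)$, whence $J_M/J_M^2 \cong {\rm Sym}^1_R(M) = M$ as $R$-modules. Thus $M$ is recovered as the conormal module $J_M/J_M^2$ of the augmentation $\varepsilon_M$, with no reference to the grading itself.

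Given an $R$-algebra isomorphism $\Phi : {\rm Sym}_R(M) \xrightarrow{\sim} {\rm Sym}_R(N)$, I would transport the standard augmentation of the target: set $\varepsilon := \varepsilon_N \circ \Phi : S \to R$, an $R$-algebra homomorphism, and put $K := \ker \varepsilon = \Phi^{-1}(J_N)$. Since $\Phi$ is an isomorphism carrying $K$ onto $J_N$, it induces an $R$-module isomorphism $K/K^2 \cong J_N/J_N^2 = N$ by the previous paragraph applied to $N$. It then remains to identify $K/K^2$ with $M$.

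Here lies the only genuine obstacle: $\Phi$ need not respect the gradings, so $\varepsilon$ need not be the standard augmentation $\varepsilon_M$, and $K$ need not equal $J_M$. To handle this I would invoke the universal property ${\rm Hom}_{R\text{-alg}}(S, R) \cong {\rm Hom}_R(M, R)$, which exhibits $\varepsilon$ as $\varepsilon_M \circ \tau_\psi$ for the ``translation'' automorphism $\tau_\psi$ of $S$ determined by $m \mapsto m + \psi(m)$ (with $m \in M$ in degree $1$ and $\psi(m) \in R$ in degree $0$), where $\psi \in {\rm Hom}_R(M,R)$ is the homomorphism corresponding to $\varepsilon$. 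As $\tau_\psi$ is an $R$-algebra automorphism (inverse $\tau_{-\psi}$) sending $K$ onto $J_M$, it yields $K/K^2 \cong J_M/J_M^2 = M$; combined with the previous step this gives $M \cong N$, as required. Equivalently, the argument can be packaged through Kähler differentials, using that $\Omega_{S/R} \cong S \otimes_R M$ is extended from $M$, so that base change along any $R$-point returns $M$ and the translations make the choice of point invisible. I note that the finite-generation hypothesis on $M$ and $N$ plays no role in this particular argument.
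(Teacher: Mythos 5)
Your proof is correct, but there is nothing in the paper to compare it with: the paper does not prove this lemma, it simply quotes it as a known result of Eakin and Heinzer (\cite[Lemma 3]{EH}). Your argument is in fact the standard proof of that result, and essentially the one in \cite{EH}: recover the module as the conormal module $K/K^{2}$ of an augmentation, and use the translation automorphisms to show this is independent of which augmentation one picks. All the steps check out. The degree count $J_M^{2}=\bigoplus_{i\ge 2}{\rm Sym}^{i}_R(M)$ gives $J_M/J_M^{2}\cong M$; the map $\tau_\psi$ is indeed an $R$-algebra automorphism (the composites $\tau_{-\psi}\tau_\psi$ and $\tau_\psi\tau_{-\psi}$ fix the degree-one generators, hence are the identity); the identity $\varepsilon=\varepsilon_M\circ\tau_\psi$ follows from the uniqueness clause of the universal property, since both sides restrict to $\psi$ on $M$; and both $\tau_\psi$ and $\Phi$ are $R$-linear, so they induce $R$-module isomorphisms $K/K^{2}\cong J_M/J_M^{2}\cong M$ and $K/K^{2}\cong J_N/J_N^{2}\cong N$ respectively. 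Your closing remark is also accurate: finite generation of $M$ and $N$ is never used in either direction, so the equivalence holds for arbitrary $R$-modules; the finiteness hypothesis merely reflects how the lemma is stated in \cite{EH} and how it is applied in this paper (e.g., to invertible ideals and projective modules, to conclude that ${\rm Sym}_R(J)\neq R^{[1]}$ when $J$ is a non-principal invertible ideal).
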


The following local-global theorem was proved by H. Bass, E. H. Connell and D. L. Wright (\cite{BCW}), 
and independently by A. A. Suslin (\cite{SUS}).
\begin{thm}\label{bcw}
Let $R$ be a ring and $A$ a finitely presented $R$-algebra. Suppose that for each maximal 
ideal $\m$ of $R$, $A_{\m}= {R_{\m}}^{[n]}$ for some integer $n \ge 0$. 
Then $A \cong {\rm Sym}_R{(M)}$ for some finitely generated projective $R$-module $M$ of rank $n$. 
\end{thm}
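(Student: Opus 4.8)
The plan is to build the projective module $M$ as a conormal module and then compare $A$ with ${\rm Sym}_R(M)$ after reducing every assertion to the localizations $R_\m$. As a first reduction, since $A$ is finitely presented over $R$, I would write $R$ as a filtered union of its finitely generated $\bZ$-subalgebras and descend $A$ (together with the local polynomial structure) to one of them, so that $R$ may be assumed Noetherian, and hence $A$ Noetherian. Localizing the hypothesis shows that $A$ is flat over $R$ (flatness being local and polynomial rings being free), and that $\Omega_{A/R}$ is a finitely presented $A$-module which is locally free of rank $n$, hence projective of rank $n$ over $A$; this will be used to control ranks throughout.

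The core strategy is to construct a finitely generated projective $R$-module $M$ of rank $n$ together with an $R$-algebra homomorphism $\psi\colon {\rm Sym}_R(M)\to A$ that becomes an isomorphism after $\otimes_R R_\m$ for every maximal ideal $\m$ of $R$. Since a homomorphism of modules (equivalently, of $R$-algebras) is an isomorphism exactly when it is so at every maximal ideal, such a $\psi$ is automatically a global isomorphism, which proves the theorem.

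To produce $M$ and $\psi$ I would first exhibit a global augmentation, that is, an $R$-algebra retraction $\epsilon\colon A\to R$ splitting the structure map $R\to A$. Granting such an $\epsilon$, set $I:=\ker\epsilon$ and $M:=I/I^2$. Over each $R_\m$ the local isomorphism $A_\m\cong {R_\m}^{[n]}$ carries $\epsilon_\m$ to an augmentation of a polynomial ring, so $I_\m$ is generated by a regular sequence of length $n$ and $M_\m=(I/I^2)_\m$ is free of rank $n$; hence $M$ is finitely generated projective of rank $n$. Because $M$ is projective, the surjection $I\twoheadrightarrow I/I^2=M$ splits as $R$-modules, giving an $R$-linear section $M\to I\subseteq A$ and thus the induced $\psi\colon {\rm Sym}_R(M)\to A$. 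It then remains to check that $\psi$ is an isomorphism at each $\m$: the map respects the $I$-adic filtrations, and since $I_\m$ is generated by a regular sequence the associated graded ring is ${\rm gr}_{I_\m}(A_\m)\cong{\rm Sym}_{R_\m}(M_\m)$, on which $\psi_\m$ induces the identity; a standard comparison of the natural gradings then forces $\psi_\m$ to be bijective.

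The main obstacle is the very first construction above, namely the existence of the global augmentation $\epsilon$. Augmentations exist over each $R_\m$ (send the local variables to $0$), but these local sections do not glue canonically: the isomorphisms $A_\m\cong {R_\m}^{[n]}$ are pinned down only up to the large automorphism group of affine space, and the difference of two augmentations is not controlled by a single vector bundle, so the gluing obstruction is genuinely non-abelian. I would overcome this by a patching/descent argument over the affine base $\Sp R$, assembling the local augmentations over a suitable Zariski cover and killing the gluing discrepancy using the vanishing of the relevant obstruction groups on an affine scheme. This is the technical heart of the result and is in the spirit of the patching lemmas of Bhatwadekar--Dutta used elsewhere in this paper; once $\epsilon$ is in hand, the remaining steps are the routine local computations sketched above.
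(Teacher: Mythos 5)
First, a point of comparison: the paper itself offers no proof of Theorem \ref{bcw}. It is quoted as a known result of Bass--Connell--Wright \cite{BCW} and Suslin \cite{SUS}, so your attempt has to be measured against those original (and quite substantial) proofs rather than against anything in this paper.

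Your proposal has two genuine gaps, either of which is fatal on its own. The first is the one you flag yourself: the existence of the global augmentation $\epsilon\colon A\to R$. This cannot be delegated to ``vanishing of the relevant obstruction groups on an affine scheme,'' because no such vanishing theorem exists. The discrepancy between two local augmentations is governed by a cocycle with values in the automorphism sheaf of $\A^n$, a non-abelian (indeed infinite-dimensional) object, and non-abelian $H^1$ over an affine base does not vanish --- it cannot, since the conclusion of the theorem allows a nontrivial projective module $M$. Concretely, for a Dedekind domain $R$ with a non-principal invertible ideal $J$, the algebra $A={\rm Sym}_R(J)$ satisfies all hypotheses yet is not $R^{[1]}$ (see Remark \ref{ctd1remark2} and Example \ref{rlndex1} of the paper); any patching principle strong enough to ``kill the gluing discrepancy'' between arbitrary local trivializations would equally prove $A\cong R^{[n]}$, which is false. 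Producing the section is precisely the hard core of \cite{BCW}, not a step one may assume.

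Second, even granting $\epsilon$, your concluding step is incorrect as stated. An arbitrary $R$-linear splitting $s\colon M=I/I^2\to I$ does induce a filtered map $\psi\colon {\rm Sym}_R(M)\to A$ that is an isomorphism on associated graded rings, but a gr-isomorphism yields only injectivity (because the source filtration is separated); surjectivity would require $I$-adic completeness, which fails for polynomial rings. A concrete counterexample: take $R=k$, $A=k[x]$, $\epsilon(x)=0$, $I=xk[x]$, and the perfectly legitimate splitting $s(\bar x)=x+x^2$. Then $\psi(k[\bar x])=k[x+x^2]\subsetneq k[x]$, although ${\rm gr}(\psi)$ is an isomorphism in every degree. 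So ``a standard comparison of the natural gradings forces $\psi_{\m}$ to be bijective'' is false; one must choose the splitting compatibly with the local polynomial coordinates, and making such compatible choices globally is again a nontrivial patching problem --- in fact it is the other half of what Bass--Connell--Wright actually prove. (By contrast, your preliminary reductions --- passing to Noetherian $R$ and showing $M=I/I^2$ is finitely generated projective of rank $n$ --- are fine.)
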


Next we state a result of A. Sathaye (\cite[Theorem 1]{S}) on the triviality of ${\A}^{2}$-fibrations over a discrete 
valuation ring containing $\bQ$.

\begin{thm}\label{sth}
Let $R$ be a discrete valuation ring containing $\bQ$. If $A$ is an ${\A}^{2}$-fibration over $R$, then $A=R^{[2]}$.
\end{thm}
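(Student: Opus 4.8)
The plan is to peel off the two fibre dimensions one at a time, reducing the $\A^2$-fibration to a pair of $\A^1$-fibration problems; the decisive feature is that, because $R \supseteq \bQ$, one may pass freely between exponential maps, $\Ga_a$-actions and locally nilpotent derivations, exactly as recalled in Section 2.

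First I would record the basic structure. Write $K := Q(R)$, let $t$ be a uniformizer and $k := R/\m$ the residue field. Since every nonzero element of $R$ is a unit times a power of $t$, we have $R_t = K$, so the generic fibre is $A_t = A \otimes_R K = K^{[2]}$ and the special fibre is $A/tA = k^{[2]}$; in particular $t$ is a nonzerodivisor on the flat (hence torsion-free) $R$-algebra $A$, and $\td_R A = 2$ with $\dim A = 3$. A preliminary observation is that $A$ is regular, hence a normal domain: $A$ is flat over the regular ring $R$ and all its fibres, being polynomial rings over fields of characteristic zero, are geometrically regular, so regularity passes to the total ring $A$. It is also convenient (via a descent argument that I would verify separately) to assume $R$ complete.

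The heart of the argument is the construction of a \emph{first coordinate}. Over the generic fibre, the derivation $\partial/\partial Y$ on $A_t = K[X,Y]$ has kernel $K[X]$; the aim is to produce a single locally nilpotent derivation $D$ on $A$ itself, with kernel $C := A^D$, that restricts compatibly to both fibres. Here the obstacle is genuinely two-fold. One must (a) choose a generic coordinate and a special-fibre coordinate so that they are the reductions of one common element of $A$ — that is, patch the fibrewise $\Ga_a$-structures into a \emph{global} one over the DVR — and (b) guarantee that the generic fibre of the resulting $\A^1$-fibration is the affine line rather than an exotic curve. Step (b) is exactly where the Abhyankar--Moh--Suzuki epimorphism theorem is needed, and it is the reason the characteristic-zero hypothesis cannot be relaxed: the positive-characteristic counterexamples to cancellation alluded to in the Introduction show that the conclusion fails without $\bQ \subseteq R$. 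I expect this patching-plus-line-fibre step to be the main difficulty of the whole proof.

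Granting a first coordinate, I would finish by descending the dimension. The kernel $C = A^D$ is an $\A^1$-fibration over the DVR $R$, so by the simpler one-dimensional analogue one gets $C = R^{[1]}$; then $A$ is an $\A^1$-fibration over the two-dimensional regular ring $C = R^{[1]}$, and a slice argument for the induced $\Ga_a$-action yields $A = C^{[1]} = R^{[2]}$. In the trivialization steps every finitely generated projective module that appears is free, since the relevant localizations of $R$ are principal ideal domains; concretely, wherever a symmetric-algebra structure is produced (as in Theorem \ref{bcw}), freeness of projectives collapses it to a genuine polynomial ring. The remaining difficulty is entirely contained in the previous paragraph, namely matching a generic-fibre coordinate with a special-fibre coordinate and promoting the two fibrewise affine-line structures to a global $\A^1$-fibration over $R$.
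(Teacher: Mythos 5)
First, a point of comparison: the paper does not prove Theorem \ref{sth} at all --- it is quoted verbatim from Sathaye \cite{S} as a known input. So your proposal is being measured against Sathaye's original argument, and as it stands it does not reconstruct it: the entire content of the theorem is concentrated in the step you yourself flag as ``the main difficulty'' and then defer. Producing an element $W \in A$ whose images are simultaneously a coordinate of the generic fibre $A \otimes_R K = K^{[2]}$ and of the special fibre $A/tA = k^{[2]}$ (a \emph{residual variable}), is not a patching formality --- it is the theorem. Your invocation of Abhyankar--Moh--Suzuki is also slightly misplaced: in characteristic zero, Rentschler's theorem already guarantees that the kernel of any nontrivial locally nilpotent derivation on $K[X,Y]$ is generated by a coordinate, so the generic fibre of such a fibration is automatically a line; where the epimorphism theorem (and the real work in \cite{S}, on degenerations of coordinate lines in the special fibre) enters is precisely in arranging that a single global element restricts to a variable on \emph{both} fibres. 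No mechanism for this is offered, so the proposal has a genuine gap rather than a different route.

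Two further steps in your closing paragraph need repair even granting a residual variable $W$. First, it is not automatic that the kernel $C = A^D$ of a locally nilpotent derivation on $A$ is finitely generated, flat, or has the right fibres --- $A$ is not yet known to be a polynomial ring, so none of the standard kernel theorems apply directly; what one actually proves is that $A$ is an $\A^1$-fibration over $R[W]$, and this requires argument (it is the content of later residual-variable theorems of Bhatwadekar--Dutta). Second, the concluding ``slice argument'' is circular as stated: the existence of a slice for the induced $\Ga_a$-action over $C$ is \emph{equivalent} to $A = C^{[1]}$. The non-circular finish is the one consistent with the rest of your sketch: once $A$ is an $\A^1$-fibration over the two-dimensional regular UFD $C = R^{[1]}$, it is the symmetric algebra of an invertible ideal of $C$ (\cite[Theorem 3.10]{BD}, or Theorem \ref{bcw} plus local triviality), and invertible ideals of a UFD are principal, giving $A = C^{[1]} = R^{[2]}$. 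Your correct peripheral observations (flatness, regularity of $A$, $C = R^{[1]}$ from triviality of $\A^1$-fibrations over a PID) do not compensate for the missing core.
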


The following version of Russell-Sathaye criterion \cite[Theorem 2.3.1]{RS} and of
Dutta-Onoda result \cite[Theorem 2.4]{DO} was proved  by Das-Dutta in \cite[Proposition 2.5]{DD}.

\begin{thm}\label{rsdd}
Let $A$ be a subring of an integral domain $B$ with a retraction $\pi: B \to A$.
Suppose that there exists a prime element $p \in A$ such that 
\begin{enumerate}
  \item [\rm (i)] $p$ is prime in $B$.
 \item [\rm (ii)] $B[1/p]={A[1/p]}^{[1]}$.
 \item [\rm (iii)] $\bigcap_{n \geqslant0} p^nB=(0)$.  
\end{enumerate}
Then there exists an element $x \in B$ such that $B=A[x]=A^{[1]}$.
\end{thm}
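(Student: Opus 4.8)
The plan is to reduce the statement to the inertness criterion of Russell--Sathaye, for which the retraction supplies the one hypothesis that is normally hard to verify, namely $pB \cap A = pA$. First I would record the consequences of having the retraction $\pi$. Since $p \in A$ and $\pi$ fixes $A$ pointwise, any relation $pb = a$ with $b \in B$ and $a \in A$ gives $a = \pi(a) = \pi(pb) = p\,\pi(b)$ with $\pi(b) \in A$; hence $pB \cap A = pA$, and an easy induction using that $p$ is a prime (in particular a non-zerodivisor) in the domain $B$ upgrades this to $p^nB \cap A = p^nA$ for every $n$. Thus the $p$-adic filtration of $A$ is induced from that of $B$, and since $A \subseteq B$ gives $\bigcap_n p^nA \subseteq \bigcap_n p^nB = (0)$ by (iii), the $p$-adic valuation $v_p$ is well defined and separated on both rings. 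Moreover $I := \ker \pi$ is an ideal of $B$ with the canonical retract decomposition $B = A \oplus I$ as $A$-modules, and $\pi$ extends to the localisation $\pi \colon B[1/p] \to A[1/p]$.

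Next I would normalise the variable. By (ii) we have $B[1/p] = A[1/p][y]$ for some $y$; replacing $y$ by $y - \pi(y)$ (an element of $A[1/p]$, so still a variable) I may assume $\pi(y) = 0$. Then the extended $\pi$ is exactly the evaluation $y \mapsto 0$ on $A[1/p][y]$, so for any $b = \sum_i a_i y^i \in B$ with $a_i \in A[1/p]$ we get $\pi(b) = a_0 \in A$: the $y$-constant term of every element of $B$ already lies in $A$, and $I$ is precisely the set of elements of $B$ with zero $y$-constant term, so $I[1/p] = y\,A[1/p][y]$ is principal. Now any variable generating $B[1/p]$ over $A[1/p]$ has the form $x = cy + d$ with $c \in A[1/p]^{*}$ and $d \in A[1/p]$; applying $\pi$ forces $d = \pi(x) \in A$, whence $cy = x - d \in B$, so $c$ lies in the fractional $A$-ideal $\mathfrak{a} := \{\, c \in A[1/p] : cy \in B \,\}$ (a nonzero $A$-submodule of $A[1/p]$, containing $p^m$ for $m \gg 0$). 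The whole problem therefore reduces to finding a \emph{unit} of $A[1/p]$ inside $\mathfrak{a}$, and then showing that the resulting $x$ generates $B$ over $A$.

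The main obstacle is precisely the production of such a unit, equivalently the demonstration that the candidate $x$ is a variable over $A$ and not merely over $A[1/p]$. The generic identity $B[1/p] = A[1/p][y]$ only places coefficients in $A[1/p]$, and $\mathfrak{a}$ is a priori an arbitrary fractional ideal, so one cannot clear $p$-power denominators term by term. This is where the three hypotheses combine in the Russell--Sathaye manner: I would select a $p$-minimal element of $\mathfrak{a}$ and use inertness $p^nB \cap A = p^nA$ together with primality of $p$ (so that $v_p$ stays multiplicative) to refine it to a unit $c \in \mathfrak{a} \cap A[1/p]^{*}$, giving $x := cy \in B$ with $B[1/p] = A[1/p][x]$. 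The final step, $B = A[x]$, would then go by induction on $y$-degree: given $b \in B$, the retraction puts its constant term $\pi(b)$ in $A$, subtracting a suitable $A$-combination of powers of $x$ lowers the degree, and the separatedness $\bigcap_n p^nB = (0)$ forces the successive denominators to clear so that all coefficients land in $A$. Making this approximation argument exact is the technical heart; once it is in place, $B = A[x] = A^{[1]}$ follows.
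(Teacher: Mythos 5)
Your opening moves coincide with the standard Das--Dutta argument that the paper relies on (the paper itself gives no proof of this statement; it quotes it from \cite{DD}, Proposition 2.5): extend $\pi$ to $B[1/p]\to A[1/p]$, normalise the variable so that $\pi(y)=0$, observe that $\pi$ then becomes evaluation at $y=0$ so constant terms of elements of $B$ lie in $A$, and record $p^nB\cap A=p^nA$ (which, incidentally, needs no induction: apply $\pi$ to $p^nb=a$ directly). All of this is correct, but the proof stops exactly where the theorem begins: the two steps you defer as ``the technical heart'' are the entire content, and your sketches of them do not work. First, producing a unit of $A[1/p]$ inside $\mathfrak{a}$ is not an obstacle at all: since $p$ is prime in $A$, every unit of $A[1/p]$ has the form $up^i$ with $u\in A^*$ and $i\in\bZ$, and $p^m\in\mathfrak{a}$ for $m\gg0$, so one simply takes $x:=p^ny$ with $n\in\bZ$ \emph{minimal} such that $p^ny\in B$. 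The genuine role of hypothesis (iii) is precisely here: it bounds $n$ below (if $p^{-k}y\in B$ for all $k$, then $y\in\bigcap_k p^kB=(0)$, absurd), and minimality then gives $x\notin pB$. You instead assign (iii) the job of ``clearing successive denominators'' in the final induction, where it is not in fact needed.

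Second, the closing induction as you describe it fails: for $b=\sum_i a_iy^i\in B$ with $a_i\in A[1/p]$, subtracting the constant term $\pi(b)=a_0\in A$ does not lower degree in any usable way, because you cannot divide by $y$ (or by $x$) inside $B$, and the higher coefficients still lie only in $A[1/p]$. The correct descent is on the power of $p$, not on the degree: choose $t\geqslant 0$ minimal with $p^tb=c_0+c_1x+\dots+c_dx^d\in A[x]$. If $t\geqslant 1$, applying $\pi$ gives $c_0=p^t\pi(b)\in pA$; reducing modulo $pB$ (a domain, by (i)) and cancelling $\bar{x}\neq 0$ yields $c_1+c_2x+\dots+c_dx^{d-1}\in pB$; dividing by $p$ and applying $\pi$ again gives $c_1\in pA$; iterating shows every $c_i\in pA$, whence $p^{t-1}b\in A[x]$ after cancelling $p$ in the domain $B$, contradicting minimality of $t$. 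This alternation --- reduce mod $pB$, cancel $\bar{x}$, apply $\pi$ --- is where hypothesis (i) and the retraction jointly do the real work, substituting for the inertness/residual-transcendence hypothesis of the original Russell--Sathaye criterion; nothing in your outline supplies this mechanism, so as written the proposal is a correct setup together with an unproved core.
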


Next we state the well-known cutting down lemma of Eakin (\cite[Lemma B]{E}).
\begin{lem}\label{e}
Let $k \subseteq A \subseteq k^{[n]}$ for some positive integer $n$. 
Then $A$ can be embedded inside $k^{[d]}$, where $d= \dim A \leqslant n$. 
\end{lem}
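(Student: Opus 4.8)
The plan is to build the desired embedding by peeling off one variable at a time, reducing the ambient polynomial ring from $k^{[n]}$ to $k^{[n-1]}$ while keeping a faithful copy of $A$ inside, and iterating until the number of variables drops to $d$. As a preliminary I would record that for a $k$-subalgebra $A$ of the affine domain $k^{[n]}$ one has $\dim A = \td_k \Frac(A)$; writing $d = \td_k \Frac(A)$, the bound $d \le n$ is immediate, and it is this transcendence degree that the induction will chase down.

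The induction is on $n - d$. If $n = d$ there is nothing to prove, since $A \subseteq k^{[n]} = k^{[d]}$. If $n > d$, set $B = k[X_1,\dots,X_n]$ and $B' = k[X_1,\dots,X_{n-1}]$, and look for a polynomial $h \in B'$ such that the $k$-algebra substitution $e_h \colon B \to B'$, fixing $X_1,\dots,X_{n-1}$ and sending $X_n \mapsto h$, restricts to an injection on $A$. Granting such an $h$, the image $e_h(A) \cong A$ is a $k$-subalgebra of $B' = k^{[n-1]}$ of the same transcendence degree $d$, and the inductive hypothesis embeds it, and hence $A$, into $k^{[d]}$.

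The crux is thus the existence of a projection $e_h$ that stays injective on $A$, and this is exactly where the hypothesis $n > d$ enters. The kernel of $e_h$ is the height-one prime $(X_n - h)B$, so $e_h|_A$ fails to be injective precisely when $(X_n - h)B \cap A \ne (0)$; passing to the localisation $C := S^{-1}B$ with $S = A \setminus \{0\}$, one checks that $e_h|_A$ is injective if and only if $X_n - h$ is a non-unit of $C$. Now $C = \Frac(A)[X_1,\dots,X_n]$ is an affine domain over $\Frac(A)$ of dimension $n - d \ge 1$, so the fibres of $\Sp B \to \Sp A$ are positive-dimensional; geometrically, a general hyperplane section $\{X_n = h\}$ still dominates $\Sp A$, which is precisely the statement that $X_n - h$ is a non-unit for a suitable $h$. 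I would make this precise by first performing a generic linear change of the $X_i$ so that $X_n$ is transcendental over $\Frac(A)$ (possible since $\td_{\Frac(A)}\Frac(B) \ge 1$), and then choosing $h$ generically.

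The step I expect to be the main obstacle is exactly this construction of $h$: the substitution polynomial is forced to have coefficients in $k$, so that the quotient $B/(X_n - h)$ is again a polynomial ring, and a single $h$ must work simultaneously for every nonzero element of $A$. When $A$ is finitely generated the requirement reduces to preserving the algebraic independence of a transcendence basis $a_1,\dots,a_d$ under the substitution, which holds for generic $h$ by a Jacobian argument; the general, possibly non-finitely-generated, case and the case of a finite base field $k$ require the genericity argument to be arranged with more care, and this is the technical heart of the lemma.
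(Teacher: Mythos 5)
The paper offers no proof of this lemma at all --- it is quoted verbatim from Eakin \cite[Lemma B]{E} --- so your attempt must stand on its own. Much of your scaffolding is correct. The induction on $n-d$ via a substitution $e_h\colon X_n\mapsto h\in k[X_1,\dots,X_{n-1}]$ is the natural (and, as far as one can tell, Eakin's own) strategy; your criterion that $e_h|_A$ is injective if and only if $X_n-h$ is a non-unit of $C=S^{-1}B$ is right, since $a=(X_n-h)b$ with $0\neq a\in A$ exactly exhibits $X_n-h$ as a divisor of a unit of $C$; and your reduction to preserving the algebraic independence of a transcendence basis $a_1,\dots,a_d$ is also valid --- in fact it needs no finite generation of $A$, contrary to your worry: given $0\neq a\in A$ with $e_h(a)=0$, pick $0\neq P\in k[T,T_1,\dots,T_d]$ with $P(a,a_1,\dots,a_d)=0$, cancel powers of $T$ using that $B$ is a domain so that $P(0,T_1,\dots,T_d)\neq 0$, and apply $e_h$ to contradict the independence of the $e_h(a_i)$. (Your preliminary $\dim A=\td_k\Frac(A)$ for an arbitrary subalgebra is itself nontrivial --- it is part of what Eakin's paper proves --- but the lemma's statement presupposes it, so I let that pass.)

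The genuine gap is exactly where you yourself locate it, and it is not a technicality but the entire content of the lemma: the existence of a single $h$ \emph{with coefficients in $k$} preserving the independence. Your proposed mechanism, ``generic $h$ by a Jacobian argument,'' fails on both fronts the lemma must cover. In characteristic $p>0$ the Jacobian does not detect algebraic independence ($X_1^p$ is transcendental over $k$ yet has identically zero differential), so full Jacobian rank is neither implied by independence nor required for it; and over a finite field ``generic'' has no force, since there are only finitely many $h$ of each bounded degree, so no Zariski-open density argument can produce a $k$-rational choice. Thus what you have is a correct plan with a theorem-sized hole at its centre. For what it is worth, over an infinite $k$ your outline can be completed without Jacobians: renumber so that $X_n$ is transcendental over $\Frac(A)$ (possible since $\td_{\Frac(A)}\Frac(B)=n-d\geq 1$), and invoke Rosenlicht's theorem that $C^*/\Frac(A)^*$ is a finitely generated group; if $X_n-c$ were a unit of $C$ for more than rank-many $c\in k$, a relation $\prod_i(X_n-c_i)^{m_i}\in\Frac(A)^*$ with distinct $c_i$ and $m_i$ not all zero would make $X_n$ algebraic over $\Frac(A)$, a contradiction --- so even a constant substitution $X_n\mapsto c$ works for all but finitely many $c$. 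The finite-field case, which the lemma as stated includes, requires genuinely more care, which is presumably why the paper simply cites \cite{E} rather than reproving it.
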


The following theorem was proved by Fujita (\cite{Fu}) and Miyanishi-Sugie (\cite{MS}) in characteristic zero, and 
by Russell (\cite[Theorem 3]{R}) in arbitrary characteristic.
\begin{thm}\label{r}
Let $k$ be a perfect field with an algebraic closure $\bar{k}$. Let $B= k^{[2]}$ and $A$ a finitely generated regular $k$-subalgebra 
of $B$ of dimension $2$ such that ${\bar{k}} \otimes_{k} A$ is a UFD and $Q(B)|_{Q(A)}$ is a separable extension. Then $A=k^{[2]}$.
\end{thm}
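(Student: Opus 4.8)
The plan is to reduce to the geometric situation over $\bar k$, prove there that the surface is the affine plane using the theory of open algebraic surfaces, and then descend back to $k$. Since $\bar k$ is a flat $k$-module, the inclusion $A \hookrightarrow B$ base changes to an inclusion $\bar A := \bar k \otimes_k A \hookrightarrow \bar B := \bar k \otimes_k B = \bar k^{[2]}$. By hypothesis $\bar A$ is a UFD, hence an integral domain, so $A$ is geometrically integral; and since $k$ is perfect and $A$ is regular and finitely generated, $A$ is smooth over $k$, whence $\bar A$ is a smooth (in particular regular) affine $\bar k$-algebra of dimension $2$. As $\bar A \subseteq \bar k^{[2]}$, every unit of $\bar A$ lies in $(\bar k^{[2]})^* = \bar k^*$, so $\bar A^* = \bar k^*$, and being a regular UFD forces $\operatorname{Pic}(\Sp \bar A) = 0$. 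I would first aim to prove $\bar A = \bar k^{[2]}$.

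Because $\td_A B = 0$ while $Q(A)$ and $Q(B)$ are finitely generated of transcendence degree $2$ over $k$, the extension $Q(B)/Q(A)$ is finite, and it is separable by hypothesis; after tensoring with $\bar k$ the induced morphism $\Sp \bar B = \A^2_{\bar k} \to \Sp \bar A$ stays dominant, generically finite and separable. Consequently $\Sp \bar A$ is \emph{separably unirational}, and by the Castelnuovo--Zariski theorem a separably unirational smooth surface over an algebraically closed field is rational; this is exactly where separability of $Q(B)/Q(A)$ is indispensable, since in positive characteristic inseparably unirational surfaces need not be rational. Next I would invoke the monotonicity of the logarithmic Kodaira dimension under a dominant, generically finite, separable morphism of smooth surfaces: pulling back log pluricanonical forms along $\A^2_{\bar k} \to \Sp \bar A$ gives $\bar\kappa(\Sp \bar A) \le \bar\kappa(\A^2_{\bar k}) = -\infty$, so $\bar\kappa(\Sp \bar A) = -\infty$.

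The heart of the argument --- and the step I expect to be the main obstacle --- is the structure theory of open algebraic surfaces due to Miyanishi, Sugie and Fujita (and Russell in positive characteristic). A smooth affine surface with $\bar\kappa = -\infty$ carries an $\A^1$-fibration $\Sp \bar A \to C$ over a smooth curve $C$; here one must fix a smooth SNC completion and run the logarithmic minimal model analysis, which is the technically demanding part. Since $\bar A$ is affine and rational, $C$ is a smooth rational affine curve, and the inclusion $\mathcal{O}(C)^* \subseteq \bar A^* = \bar k^*$ forces $C = \A^1_{\bar k}$, because removing any point of $\A^1$ would create a nonconstant unit. Finally, an $\A^1$-fibration over $\A^1_{\bar k}$ on a factorial surface with only constant units has no degenerate fibres obstructing triviality, and the open-surface theory then yields $\Sp \bar A \cong \A^2_{\bar k}$, that is, $\bar A = \bar k^{[2]}$.

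It remains to descend from $\bar k$ to $k$. The identity $\bar A = \bar k^{[2]}$ exhibits $A$ as a $\bar k/k$-form of the affine plane, and since $k$ is perfect this is a separable form. By Kambayashi's theorem that every separable form of the affine plane over a field is trivial, one concludes $A = k^{[2]}$, completing the proof. The routine points I would still need to check are that geometric integrality, regularity and separability genuinely pass to the base change $\bar A \subseteq \bar B$, but the substantive difficulty is concentrated entirely in the open-surface classification invoked in the third paragraph.
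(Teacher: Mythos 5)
The paper offers no proof of Theorem \ref{r} to compare against: it is quoted as a known theorem of Fujita and Miyanishi--Sugie (characteristic zero) and of Russell (arbitrary characteristic), with citations in place of an argument. Your sketch correctly reconstructs, in outline, the standard proof from exactly those cited sources (together with Kambayashi's theorem on separable forms of the affine plane for the descent step): rationality of the separably unirational surface $\Sp(\bar{k}\otimes_{k}A)$ via Castelnuovo--Zariski, logarithmic Kodaira dimension $-\infty$ by pulling back log pluricanonical forms, the Miyanishi--Sugie--Fujita/Russell $\A^1$-fibration structure theory to obtain $\bar{k}^{[2]}$, and then Galois descent to $k$ --- sound as a blueprint, with the substantive content residing, as you yourself acknowledge, in the invoked open-surface classification, which is precisely what the cited papers establish.
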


Next we state a useful criterion for flatness (\cite[20.G]{MA}).
\begin{lem}\label{akl}
Let $R \rightarrow A$ and $A \rightarrow B$ be local homomorphisms of Noetherian local rings. Suppose that $A$ is flat 
over $R$. Then $B$ is flat over $A$ if (and only if) the following two conditions hold:
\begin{enumerate}
 \item [\rm (i)]$B$ is flat over $R$.
 \item [\rm (ii)]$B \otimes_{R} L$ is flat over $A \otimes_{R} L $, where $L:=R/{\m}_{R}$.
\end{enumerate}
\end{lem}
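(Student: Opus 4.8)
The statement is an equivalence, but since the necessity is parenthetical I would dispatch it quickly and concentrate on the substantive ``if'' direction. For necessity, suppose $B$ is flat over $A$. Transitivity of flatness settles (i): for an $R$-module $M$ we have $M \otimes_R B \cong (M \otimes_R A) \otimes_A B$, and the functor $M \mapsto M\otimes_R B$ is thus a composite of exact functors (exact because $A$ is flat over $R$ and $B$ is flat over $A$), hence exact, so $B$ is flat over $R$. Stability of flatness under base change settles (ii): with $A' := A \otimes_R L$ one has $B \otimes_R L \cong B \otimes_A A'$, and flatness of $B$ over $A$ makes $B \otimes_A A'$ flat over $A'$.

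For sufficiency, the plan is to invoke the local criterion of flatness (\cite{MA}) applied to the ideal $I := \m_R A$ of $A$ and the $A$-module $M := B$. That criterion reduces the flatness of $B$ over $A$ to two conditions: that $B/\m_R B$ be flat over $A/\m_R A$, and that $\operatorname{Tor}_1^A(A/\m_R A,\, B) = 0$. The first condition is exactly hypothesis (ii), since $A/\m_R A = A \otimes_R L$ and $B/\m_R B = B \otimes_R L$.

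The heart of the argument is therefore the vanishing of $\operatorname{Tor}_1^A(A/\m_R A, B)$, and here I would exploit the flatness of \emph{both} $A$ and $B$ over the base ring $R$. Choose a free resolution $F_\bullet \to L$ of $L$ over $R$. Since $A$ is flat over $R$, tensoring over $R$ with $A$ preserves exactness, so $F_\bullet \otimes_R A$ is a free $A$-resolution of $A \otimes_R L = A/\m_R A$. Computing $\operatorname{Tor}$ from this resolution gives
\[
\operatorname{Tor}_n^A(A/\m_R A,\, B) \;\cong\; H_n\big((F_\bullet \otimes_R A) \otimes_A B\big) \;\cong\; H_n(F_\bullet \otimes_R B) \;\cong\; \operatorname{Tor}_n^R(L, B),
\]
and the last group vanishes for every $n \ge 1$ because $B$ is flat over $R$. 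In particular $\operatorname{Tor}_1^A(A/\m_R A, B) = 0$, as required.

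It remains to check the hypothesis that licenses the local criterion, namely that $B$ is $\m_R A$-adically ideal-separated as an $A$-module. This holds because $A \to B$ is a local homomorphism of Noetherian local rings: one has $\m_R A \subseteq \m_A$, hence $\m_R B \subseteq \m_B$, so for a finitely generated ideal $\mathfrak{a} \subseteq A$ the finitely generated $B$-module $\mathfrak{a} \otimes_A B$ satisfies $\bigcap_n (\m_R A)^n(\mathfrak{a}\otimes_A B) \subseteq \bigcap_n \m_B^{\,n}(\mathfrak{a}\otimes_A B) = 0$ by Krull's intersection theorem. With separatedness in hand the local criterion applies and yields the flatness of $B$ over $A$. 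The main obstacle here is not any single computation but the correct bookkeeping: routing the $\operatorname{Tor}$-vanishing through the base ring $R$ via flat base change, and confirming that the Noetherian and locality hypotheses supply the ideal-separatedness needed to invoke the local flatness criterion.
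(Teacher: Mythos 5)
Your proof is correct, and it is essentially the argument behind the result as cited: the paper gives no proof of this lemma, stating it as a known flatness criterion quoted from Matsumura \cite[20.G]{MA}, and your route --- the local criterion of flatness applied to the ideal $\m_R A$, with the Tor-vanishing $\operatorname{Tor}_1^A(A/\m_R A, B) \cong \operatorname{Tor}_1^R(L,B)=0$ obtained by flat base change and ideal-separatedness supplied by Krull's intersection theorem --- is precisely the standard proof given in that reference. Nothing is missing; the necessity direction via transitivity and base change is also handled correctly.
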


For convenience, we state below a well-known result on flatness (\cite[Theorem 7.4(i)]{M}).
\begin{lem}\label{bourbaki}
Let $R$ be a ring, $M$ a flat $R$-module, $N$ an $R$-module and let $N_1,N_2$ be submodules of the $R$-module $N$.
Then, considering all the modules below as submodules of $N \otimes_R M$, we have
$$(N_1 \cap N_2) \otimes_R M= (N_1 \otimes_R M) \cap (N_2 \otimes_R M).   $$ 
\end{lem}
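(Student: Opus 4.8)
The plan is to deduce the equality from a single short exact sequence together with the exactness of $-\otimes_R M$. First I would record the standing identifications that make the statement meaningful: since $M$ is flat, each inclusion $N_i \hookrightarrow N$ remains injective after tensoring, so $N_i \otimes_R M$ embeds in $N \otimes_R M$ and may genuinely be regarded as a submodule. This embedding is exactly where flatness becomes indispensable, and it is what allows the displayed intersection on the right-hand side to be formed literally inside $N \otimes_R M$.

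The key construction is a diagonal-difference map. Consider the $R$-linear map $\psi \colon N \to (N/N_1) \oplus (N/N_2)$ defined by $\psi(n) = (n + N_1,\, n + N_2)$. Its kernel is precisely $N_1 \cap N_2$, so we obtain the exact sequence
\[
0 \longrightarrow N_1 \cap N_2 \longrightarrow N \xrightarrow{\ \psi\ } (N/N_1) \oplus (N/N_2).
\]
Applying the exact functor $-\otimes_R M$ (this is where flatness of $M$ is invoked a second time) yields the exact sequence
\[
0 \longrightarrow (N_1 \cap N_2) \otimes_R M \longrightarrow N \otimes_R M \xrightarrow{\ \psi \otimes 1\ } \bigl((N/N_1) \oplus (N/N_2)\bigr) \otimes_R M.
\]

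Next I would identify the target. Tensor products commute with finite direct sums, and flatness gives $(N/N_i) \otimes_R M \cong (N \otimes_R M)/(N_i \otimes_R M)$ for $i=1,2$, where on the right the submodule $N_i \otimes_R M$ is the one fixed above. Under these isomorphisms the map $\psi \otimes 1$ becomes $x \mapsto \bigl(x + (N_1 \otimes_R M),\, x + (N_2 \otimes_R M)\bigr)$, whose kernel is by inspection $(N_1 \otimes_R M) \cap (N_2 \otimes_R M)$. Comparing with the exact sequence, this same kernel equals the image of $(N_1 \cap N_2) \otimes_R M$, which the injectivity at the left-hand end identifies with $(N_1 \cap N_2) \otimes_R M$ itself. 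Equating the two descriptions of the kernel gives the asserted equality.

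The argument has no hard computational core; the only point demanding care is the bookkeeping of \emph{which} copies of the various tensor products sit inside $N \otimes_R M$, that is, checking that the several natural identifications are compatible as literal submodules rather than merely abstractly isomorphic. All of these compatibilities reduce to the injectivity of $N_i \otimes_R M \to N \otimes_R M$ and the right-exactness of the tensor product, so flatness is used at precisely two places: to turn the initial inclusions into honest submodule embeddings, and to preserve exactness after tensoring. I would also remark that the equality can fail when $M$ is not flat, so the hypothesis is essential and not a mere convenience.
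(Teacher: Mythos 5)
Your proof is correct. The paper itself gives no proof of this lemma---it is quoted as a known result with a citation to Matsumura, \emph{Commutative Ring Theory}, Theorem 7.4(i)---and your argument, tensoring the left-exact sequence $0 \to N_1 \cap N_2 \to N \to (N/N_1) \oplus (N/N_2)$ with the flat module $M$ and identifying the kernel of the resulting map with $(N_1 \otimes_R M) \cap (N_2 \otimes_R M)$, is precisely the standard proof found in that cited source, including the careful handling of the submodule identifications.
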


Finally, we recall some useful properties of exponential maps (\cite[pp. 1291-1292]{Cr}).
\begin{lem}\label{lnd}
Let $B$ be an integral domain, $\phi\in \Exp_{R}(B)$ and $A:=B^{\phi}$. 
Then the following statements hold:
\begin{enumerate}
 \item [\rm (i)] $A$ is a factorially closed subring of $B$. Consequently, $A$ is  algebraically closed in $B$ and
  if $B$ is a UFD, then $A$ is also a UFD.
 \item [\rm (ii)] For a multiplicatively closed set $S\subseteq A \setminus \{0\}$, $\phi$ extends to an exponential map  
 of $S^{-1}B$ with ring of invariants $S^{-1}A$ and $B \cap S^{-1}A=A$. 
 \item [\rm (iii)] If $\phi$ is non-trivial (i.e., $\phi \neq Id$), then there exists a non-zero element $a \in A$ such that 
 $B_{a}={A_{a}}^{[1]}$. In particular, $\td_{A}B=1$.
\end{enumerate}
\end{lem}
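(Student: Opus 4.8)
The unifying tool throughout will be the \emph{degree function} attached to $\phi$: for $b \in B$ set $\deg_\phi(b) := \deg_U \phi_U(b)$, the $U$-degree of the polynomial $\phi_U(b) \in B[U]$, with $\deg_\phi(0) = -\infty$. Since $B$ is a domain, so is $B[U]$, and the homomorphism property $\phi_U(bc) = \phi_U(b)\phi_U(c)$ immediately gives $\deg_\phi(bc) = \deg_\phi(b) + \deg_\phi(c)$ and $\deg_\phi(b+c) \le \max\{\deg_\phi(b), \deg_\phi(c)\}$; moreover $b \in A \setminus \{0\}$ precisely when $\deg_\phi(b) = 0$. For (i) I would deduce factorial closure directly: if $bc \in A \setminus \{0\}$ then $\deg_\phi(b) + \deg_\phi(c) = 0$ with both summands non-negative, forcing $b, c \in A$. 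Algebraic closedness then follows by the standard argument (take a minimal algebraic relation of $b$ over $A$; its constant term lies in $A \setminus \{0\}$ and is divisible by $b$, so factorial closure gives $b \in A$). The UFD claim follows because a prime of $B$ lying in $A$ stays prime in $A$, while factorial closure forces every factor in the $B$-factorisation of an element $a \in A$ to lie in $A$; hence every nonzero nonunit of $A$ is a product of primes of $A$, which characterises $A$ as a UFD.

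For (ii), the point is that $S \subseteq A$ consists of $\phi$-invariants, so $\phi_U(s) = s$ is a unit of $S^{-1}B[U]$ for every $s \in S$; by the universal property of localisation the composite $B \to B[U] \to S^{-1}B[U]$ extends uniquely to an $R$-algebra map $S^{-1}B \to S^{-1}B[U]$, and I would verify the two exponential axioms for this extension by reducing to the corresponding identities on $B$. The invariant ring is computed by clearing denominators: $\phi_U(b)/s = b/s$ in $S^{-1}B[U]$ forces $t(\phi_U(b) - b) = 0$ for some $t \in S$, whence $\phi_U(b) = b$ as $B[U]$ is a domain, so the invariants are exactly $S^{-1}A$. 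Finally $B \cap S^{-1}A = A$ is again factorial closure: if $x \in B$ satisfies $sx = a$ with $a \in A$ and $s \in S \subseteq A \setminus \{0\}$, then $x \in A$.

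For (iii), nontriviality means some element has positive $\deg_\phi$; let $n \ge 1$ be the minimal positive value and pick $b$ attaining it, writing $\phi_U(b) = b + c_1 U + \cdots + c_n U^n$ with $c_n \ne 0$. Comparing coefficients in the cocycle identity $\phi_V \phi_U = \phi_{V+U}$ (recalling $\phi_V(U) = U$) yields $\phi_V(c_j) = \sum_{i \ge j} \binom{i}{j} c_i V^{i-j}$, so $\deg_\phi(c_j) \le n - j < n$ for every $j \ge 1$; by minimality of $n$ this forces $c_1, \dots, c_n \in A$, and in particular the leading coefficient $a := c_n$ lies in $A \setminus \{0\}$. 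Since $b \notin A$ and $A$ is algebraically closed in $B$ by (i), $b$ is transcendental over $A$; and a degree induction (observing that $a^{\deg_\phi f} f^{\,n}$ and a suitable power of $b$ share the same leading behaviour, so their difference has strictly smaller $\deg_\phi$) shows every element of $B$ is algebraic over $Q(A)(b)$, giving $\td_{A} B = 1$.

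It remains to produce $a' \in A \setminus \{0\}$ and a \emph{slice} $t \in B_{a'}$ with $\phi_U(t) = t + U$; granting this, the induction $f \mapsto f - (\text{leading coefficient}) \cdot t^{\deg_\phi f}$ terminates and yields $B_{a'} = A_{a'}[t] = {A_{a'}}^{[1]}$, with $t$ transcendental as above. When $\mathrm{char}\,B = 0$ one checks $n = 1$: the relation $\phi_V(c_{n-1}) = c_{n-1} + n a V$ combined with $c_{n-1} \in A$ (for $n \ge 2$) forces $na = 0$, impossible in characteristic zero with $a \ne 0$; hence $b$ is already a pre-slice and $t = b/a$ works with $a' = a$. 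The main obstacle is the positive-characteristic case, where $n$ may be a proper power of $\mathrm{char}\,B$ and no degree-one element need exist in any $B_{a'}$ a priori. Here I would follow Crachiola's refinement, peeling off the $\mathrm{char}\,B$-adic digits of the occurring degrees by applying Lucas' theorem to the binomial coefficients in the cocycle identity, thereby descending to a local slice. This delicate combinatorial descent is the crux of the argument; everything else is formal.
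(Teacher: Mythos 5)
Your parts (i) and (ii), and the characteristic-zero case of (iii), are correct and are essentially the standard degree-function argument; note that the paper itself gives no proof of this lemma but simply cites Crachiola (\cite[pp.\ 1291--1292]{Cr}), so the benchmark is that standard argument, which you have reconstructed faithfully up to the point noted below. In particular your derivation that the minimal positive degree $n$ has invariant coefficients $c_1,\dots,c_n\in A$, and your observation via $\phi_V(c_{n-1})=c_{n-1}+naV$ that $n=1$ in characteristic zero, are exactly right.

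The genuine gap is in your positive-characteristic endgame: the goal you set yourself --- produce $a'\in A\setminus\{0\}$ and a slice $t\in B_{a'}$ with $\phi_U(t)=t+U$ --- is unattainable, so no ``descent to a local slice'' can exist. Counterexample: $B=k[x]$ with ${\rm char}\,k=p$ and $\phi_U(x)=x+U^p$ (this is exponential since $(U+V)^p=U^p+V^p$). Here $A=k$, so $B_{a'}=B$ for every $a'\in A\setminus\{0\}$, and for any $t(x)\in B$ one has $\phi_U(t(x))=t(x+U^p)\in k[x][U^p]$, whose coefficient of $U^1$ vanishes; yet the conclusion $B_a={A_a}^{[1]}$ holds trivially with generator $x$. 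The correct endgame (and what Crachiola actually proves) is not a slice but the statement that the minimal-degree element $b$ itself generates: first, minimality of $n$ forces $\binom{n}{j}\equiv 0 \pmod p$ for $0<j<n$ (otherwise the coefficient $c_j$ would have degree exactly $n-j\in(0,n)$), whence $n=p^e$ by Lucas; next, for any $f$ with $m:=\deg_\phi f>0$, writing $\phi_U(f)=\sum_i f_iU^i$ and $r:=m \bmod p^e$, Lucas gives $\binom{m}{m-r}=\binom{m}{r}\equiv 1\pmod p$, so $f_{m-r}$ has degree exactly $r$; minimality then forces $r=0$, i.e.\ $n\mid m$ for \emph{every} occurring degree. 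With this divisibility, the induction $f\mapsto a^{q}f-\lambda b^{q}$ (where $m=qn$ and $\lambda=f_m\in A$ is the leading coefficient) strictly drops $\deg_\phi$ and yields $B_a=A_a[b]$ with $a=c_n$, even though $\phi_U(b)=b+c_1U+\dots+aU^n$ is not a translation. Relatedly, your standalone transcendence sketch does not terminate as stated when $n\ge 2$: the difference $a^mf^n-\lambda^n b^m$ has degree $<mn$ but not $<m$, so the induction on $\deg_\phi f$ is not well-founded; both $\td_AB=1$ and transcendence of $b$ are instead immediate consequences once $B_a=A_a[b]$ is established.
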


\section{A patching result}

In this section we prove a result on $\A^1$-patch (Lemma \ref{patch}).  
For convenience, we first state a few elementary results. 

\begin{lem}\label{ff}
Let  $R$ be a ring, $B$ an $R$-algebra and $A$ an $R$-algebra retract of $B$. 
Then the following statements hold:
\begin{enumerate}
 \item[\rm(i)] If $B$ is a finitely generated $R$-algebra, then $A$ is a finitely generated $R$-algebra.
\item[\rm(ii)] If $B$ is a faithfully flat $R$-algebra, then $A$ is a faithfully flat $R$-algebra.
\end{enumerate}
\end{lem}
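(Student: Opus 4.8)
The plan is to work throughout with the defining idempotent $\pi\colon B\to B$, which by hypothesis is an $R$-algebra homomorphism satisfying $\pi^2=\pi$ and $\pi(B)=A$. Two elementary consequences drive everything: first, $\pi$ restricts to the identity on $A$ (for $a=\pi(b)\in A$ one has $\pi(a)=\pi^2(b)=\pi(b)=a$); second, the co-restriction $\pi\colon B\to A$ is an $R$-linear surjection that splits the inclusion $\iota\colon A\hookrightarrow B$, i.e. $\pi\circ\iota=\mathrm{id}_A$. For part (i) I would simply transport generators through $\pi$: if $B=R[b_1,\dots,b_n]$, then because $\pi$ is an $R$-algebra homomorphism onto $A$, every element of $A$ has the form $\pi\bigl(f(b_1,\dots,b_n)\bigr)=f(\pi(b_1),\dots,\pi(b_n))$ for some $f$ with coefficients in $R$; hence $A=R[\pi(b_1),\dots,\pi(b_n)]$ is a finitely generated $R$-algebra.

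For part (ii), I would first isolate flatness. The relation $\pi\circ\iota=\mathrm{id}_A$ together with idempotency gives an internal $R$-module decomposition $B=A\oplus\ker\pi$ (write $b=\pi(b)+(b-\pi(b))$, noting $b-\pi(b)\in\ker\pi$ and $A\cap\ker\pi=0$). Since a faithfully flat algebra is in particular flat, $B$ is $R$-flat, and a direct summand of a flat module is flat; therefore $A$ is flat over $R$. This half is immediate once the splitting is recognized.

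The remaining and more delicate point is upgrading flatness to \emph{faithful} flatness. Here I would invoke the standard criterion that a flat $R$-algebra $A$ is faithfully flat exactly when the structure morphism $\Sp A\to \Sp R$ is surjective. The inclusions $R\subseteq A\subseteq B$ induce a factorization $\Sp B\to \Sp A\to \Sp R$; faithful flatness of $B$ makes the composite $\Sp B\to \Sp R$ surjective, and a surjection that factors through $\Sp A\to \Sp R$ forces the latter to be surjective as well. Combined with the flatness already obtained, this yields that $A$ is faithfully flat over $R$. (Alternatively, one can argue the faithfulness concretely at each maximal ideal $\m$ of $R$: faithful flatness of $B$ gives $B/\m B\neq 0$, and $\pi$ descends to an idempotent $R/\m$-endomorphism of $B/\m B$ whose image is a retract containing the field $R/\m$, hence nonzero; as that image is a quotient of $A/\m A$, we conclude $\m A\neq A$.)

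The genuine content lies entirely in this last step; the finite-generation and flatness assertions follow formally from pushing generators through $\pi$ and from the $R$-module splitting, respectively. I expect the main obstacle to be purely expository, namely to present the faithfulness argument cleanly without over-relying on $B$ being a domain, since Lemma~\ref{ct} is unavailable in this generality.
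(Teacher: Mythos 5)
Your proposal is correct and follows essentially the same route as the paper: part (i) by pushing generators through $\pi$ (i.e.\ $A$ as a homomorphic image of $B$), and part (ii) by noting $A$ is an $R$-module direct summand of the flat algebra $B$, then upgrading to faithful flatness from the inclusions $R\subseteq A\subseteq B$ with $B$ faithfully flat. The paper states this last step in one line, and your $\operatorname{Spec}$-surjectivity (or maximal-ideal) argument is just the standard justification of that same fact spelled out.
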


\begin{proof}
(i) The $R$-algebra $A$, being a quotient of $B$, is finitely 
generated over $R$.

(ii) $A$ is a direct summand of the faithfully flat $R$-algebra $B$. So $A$ is  flat over $R$. 
Since $R \subseteq A \subseteq B$, and $B$ is faithfully flat over $R$, it follows that $A$ is faithfully flat 
over $R$.
\end{proof}

The next result, pertaining to the patching conditions of an $\A^1$-patch, occurs in \cite[Lemma 2.2]{DGO}.

\begin{lem}\label{easy0}
Let $C$ be an integral domain, and let $x,y$ be non-zero elements of $C$.
Then the following statements are equivalent:
\begin{enumerate}
\item[\rm(I)] $y$ is a $(C/xC)$-regular element.
\item[\rm(II)] $C=C_x\cap C_y$.
\end{enumerate} 
 \end{lem}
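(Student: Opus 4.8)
The plan is to work entirely inside the fraction field $Q(C)$, regarding $C_x$ and $C_y$ as the subrings $C[1/x]$ and $C[1/y]$. Since $C \subseteq C_x \cap C_y$ holds automatically, statement (II) amounts to the reverse inclusion $C_x \cap C_y \subseteq C$, whereas statement (I) says precisely that for every $c \in C$, the condition $yc \in xC$ forces $c \in xC$. Reformulating both conditions this way reduces the lemma to two short cancellation arguments in $Q(C)$.

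First I would prove (II) $\Rightarrow$ (I). Suppose $yc = xd$ for some $c, d \in C$. Then the element $z := c/x = d/y$ of $Q(C)$ lies in $C_x \cap C_y$, which by hypothesis equals $C$; hence $c = xz \in xC$, so $y$ is a $(C/xC)$-regular element.

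Next, for (I) $\Rightarrow$ (II), I would first record the auxiliary fact that if $y$ is regular modulo $x$ then so is every power $y^n$: indeed, $y^n c \in xC$ gives $y(y^{n-1}c) \in xC$, so $y^{n-1} c \in xC$, and one descends by induction. Then, taking $z \in C_x \cap C_y$ and writing $z = a/x^m = b/y^n$ with $a, b \in C$, I would show $x^m \mid a$ by induction on $m$, which yields $z \in C$. Clearing denominators gives $a y^n = b x^m$, so $a y^n \in xC$; regularity of $y^n$ then forces $a = x a_1$. Cancelling $x$ (legitimate in the domain $C$) produces $a_1 y^n = b x^{m-1}$, so $z = a_1/x^{m-1}$ still lies in $C_x \cap C_y$ but with the exponent of $x$ lowered to $m-1$, and the inductive hypothesis (with base case $m = 0$ trivial) completes the argument.

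The whole proof is elementary, and I do not anticipate a genuine obstacle. The only point demanding care is the reduction step in the forward direction: one must apply the regularity hypothesis to the power $y^n$ rather than to $y$, and set up the induction on the exponent $m$ of $x$ rather than on $n$, so that each application of regularity strips off exactly one factor of $x$.
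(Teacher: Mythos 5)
Your proof is correct. One thing to note: the paper itself gives no proof of this lemma at all --- it is imported from \cite[Lemma 2.2]{DGO} (``The next result \dots occurs in [DGO, Lemma 2.2]''), so there is no in-paper argument to compare yours against; your write-up supplies a self-contained elementary proof of the cited fact. Both directions are sound. In (II) $\Rightarrow$ (I), packaging the hypothesis $yc = xd$ into the single element $z = c/x = d/y$ of $C_x \cap C_y$ is exactly the right move, and (II) then gives $c = xz \in xC$. In (I) $\Rightarrow$ (II), you correctly isolate the only delicate point: the regularity hypothesis concerns $y$, but the denominator of $z$ inside $C_y$ is a power $y^n$, so one must first promote regularity of $y$ modulo $x$ to regularity of $y^n$ modulo $x$ (your downward induction does this), and then induct on the exponent $m$ of $x$, using $ay^n = bx^m \in xC$ to extract $a = xa_1$ and cancelling $x$ in the domain $C$ to lower $m$ by one. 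The base case $m = 0$ and the descent are both handled correctly, so the argument is complete and is the natural proof of this statement.
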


The following result is crucial to the proof of our main patching result. 

\begin{lem}\label{ideal} Let $R$ be an integral domain, $A$ a flat $R$-algebra, $r,x$ non-zero elements of $R$
and $I=rR_x \cap R$. Then we have:
\begin{enumerate}
\item[\rm(i)] $IA=rA_x \cap A$.
\item[\rm(ii)] If $I$ is an invertible ideal of $R$, then $I^n=r^nR_x \cap R$ for every $n \ge 0$.
\end{enumerate}
\end{lem}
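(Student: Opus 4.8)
The plan is to obtain (i) as a direct consequence of the flat base-change formula for intersections (Lemma \ref{bourbaki}), and to reduce (ii) to a purely local statement that is then settled by invertibility together with a short induction. For (i), I would view $R$ and $rR_x$ as $R$-submodules of $R_x$, so that by definition $I = R \cap rR_x$. As $A$ is flat over $R$, Lemma \ref{bourbaki} (applied with $N = R_x$, $N_1 = R$, $N_2 = rR_x$ and $M = A$) yields
\[
(R \cap rR_x)\otimes_R A = (R\otimes_R A)\cap(rR_x\otimes_R A)
\]
inside $R_x\otimes_R A$. It then remains to identify the three tensor products. Localization commutes with tensoring, so $R_x\otimes_R A = A_x$; since $x$ is a nonzero element of the domain $R$, multiplication by $x$ is injective on $R$ and hence, by flatness, on $A$, so $R\otimes_R A = A$ embeds as $A\subseteq A_x$. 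Likewise, multiplication by the nonzero element $r$ is an isomorphism $R_x\xrightarrow{\sim}rR_x$, so tensoring with the flat module $A$ identifies $rR_x\otimes_R A$ with $rA_x\subseteq A_x$ and $I\otimes_R A$ with $IA\subseteq A$. Substituting these identifications into the displayed equality gives $IA = A\cap rA_x$, which is (i). The only delicate points here are the module identifications, each of which follows from flatness and the fact that $R$ is a domain.

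For (ii), the inclusion $I^n\subseteq r^nR_x\cap R$ always holds, since $I\subseteq rR_x$ forces $I^n\subseteq r^nR_x$ while $I^n\subseteq R$ is clear (the cases $n=0,1$ being trivial). For the reverse inclusion I would verify the equality after localizing at each maximal ideal $\m$ of $R$, which suffices because two ideals of $R$ agree once all their localizations at maximal ideals do. Writing $S:=R_\m$, the saturation description $r^nR_x\cap R=\{a\in R:x^j a\in r^n R\text{ for some }j\ge 0\}$ localizes, via the flatness of localization (Lemma \ref{bourbaki}), to $(r^nR_x\cap R)_\m=r^nS_x\cap S$ for every $n\ge 0$; in particular, taking $n=1$, the hypothesis localizes to $I_\m=rS_x\cap S$. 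Since $I$ is invertible, $I_\m$ is free of rank one over $S$, say $I_\m=tS$ with $t\neq 0$.

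The crux is then the local claim that $tS=rS_x\cap S$ implies $t^nS=r^nS_x\cap S$ for all $n$. From $tS=rS_x\cap S$ we get $r\in tS$ and $t\in rS_x$, so $t\mid r$ in $S$ and $r\mid t$ in $S_x$; hence $r$ and $t$ are associates in $S_x$, giving $r^nS_x=t^nS_x$ and reducing the claim to proving $t^nS_x\cap S=t^nS$, i.e. that $x$ is a nonzerodivisor on $S/t^nS$. For $n=1$ this is exactly the localized hypothesis $tS_x\cap S=rS_x\cap S=tS$, and a short induction in the domain $S$ propagates it: if $xz\in t^nS$ then $z\in t^{n-1}S$ by the inductive hypothesis, and cancelling $t^{n-1}$ reduces to the case $n=1$. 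Combining these, $(r^nR_x\cap R)_\m=r^nS_x\cap S=t^nS_x\cap S=t^nS=(I^n)_\m$ at every maximal ideal $\m$, whence $I^n=r^nR_x\cap R$.

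I expect the main obstacle to be the case $x\in\m$, where $x$ is not invertible in $S$, so that membership in $r^nR_x$ does not by itself yield membership in $r^nS$; the reformulation as ``$x$ is a nonzerodivisor modulo $t^n$'', bootstrapped from the $n=1$ hypothesis, is precisely what removes this difficulty. When $x\notin\m$, by contrast, $x$ is a unit in $S$, one has $I_\m=rS$, and both sides reduce at once to $r^nS$, so no further work is needed.
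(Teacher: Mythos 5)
Your proof is correct and follows essentially the same route as the paper's: part (i) is the identical application of Lemma \ref{bourbaki} with the same module identifications, and part (ii) is the paper's argument of localizing so that $I$ becomes principal, observing that the generator and $r$ are associates in the localization at $x$, and deducing $t^nS = t^nS_x \cap S$ from the fact that $x$ is regular modulo $t$ and hence modulo $t^n$. The only difference is cosmetic: you spell out the localization at each maximal ideal and prove the regularity of $x$ modulo $t^n$ by an explicit induction, where the paper simply says ``assume $R$ is local'' and quotes the standard fact.
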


\begin{proof}
(i) Since $A$ is $R$-flat, we have by Lemma \ref{bourbaki},
$$ I\otimes_R A= (rR_x \otimes_R A) \cap (R \otimes_R A),$$
considering all three as submodules of $R_x \otimes_R A$.  Hence, 
identifying $R_x \otimes_R A$ with $A_x$ and considering 
the images of the above three modules  in $A_x$, we have 
$IA=rA_x \cap A$.

(ii) Fix $n \ge 0$. Since $I= rR_x \cap R$, we have $I^n \subseteq r^nR_x \cap R$. 
To prove the equality, we assume that $R$ is a local ring, so that $I$ becomes principal.  
Let $I= c R$ for some $c\in R$. 
Then $cR_x=I_x=rR_x$ and $c^n R_x=I^nR_x=r^n R_x$. Since $I=I_x \cap R$  by construction, we have
 $cR= cR_x \cap R$ which shows that $x$ is $(R/cR)$-regular and hence $(R/c^nR)$-regular. Therefore, 
 $c^nR=c^nR_x \cap R$
 and hence $I^n =c^nR= c^nR_x \cap R=r^n R_x \cap R$. 
\end{proof}

\begin{lem}\label{patch}
Let $R$ be a Noetherian domain and  $B$ be  a faithfully flat $R$-algebra 
such that $R$ is factorially closed in $B$. 
Let $A$ be an $R$-subalgebra of $B$ such that $A$ is an $R$-algebra retract of $B$. 
Suppose there exist non-zero elements $x, y \in R$ such that
\begin{enumerate}
\item[\rm(i)] $y$ is an ($R/xR$)-regular element. 
 \item[\rm(ii)] $B_x={A_x}^{[1]}$.
 \item[\rm(iii)] $B_y={A_y}^{[1]}$.
\end{enumerate}
Then $B \cong {\rm Sym}_A(IA)$ for some invertible ideal $I$ of $R$. 
In particular, $B$ is faithfully flat over $A$. 
\end{lem}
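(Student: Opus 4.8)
The plan is to reconstruct $B$ from its two one-variable presentations $B_x$ and $B_y$ by a patching argument and to show that the gluing datum is a unit coming from $R$, so that $B$ becomes the symmetric algebra of an invertible ideal of $R$. I would first record the reductions. By Lemma \ref{ff}(ii), $A$ is faithfully flat over $R$; since $A$ is a retract of the domain $B$, Lemma \ref{ct}(i) shows $A$ is algebraically closed in $B$, and $A,B$ are integral domains. As $y$ is $(R/xR)$-regular and $A,B$ are $R$-flat, tensoring the injection $R/xR \xrightarrow{y} R/xR$ with $A$ and with $B$ shows that $y$ is $(A/xA)$- and $(B/xB)$-regular; hence Lemma \ref{easy0}, applied to $R$, $A$ and $B$, yields the three patching identities $R=R_x\cap R_y$, $A=A_x\cap A_y$ and $B=B_x\cap B_y$, all taken inside the respective localizations at $xy$.

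Next I would normalize the variables. Write $B_x=A_x[T_1]$ and $B_y=A_y[T_2]$, and let $\pi\colon B\to A$ be the retraction, so that $\pi$ is $R$-linear and restricts to the identity on $A$. Replacing $T_1$ by $T_1-\pi_x(T_1)$ and $T_2$ by $T_2-\pi_y(T_2)$ preserves the two presentations and arranges $\pi_x(T_1)=0=\pi_y(T_2)$. Over $A_{xy}$, the equality $A_{xy}[T_1]=B_{xy}=A_{xy}[T_2]$ of two one-variable polynomial rings over the domain $A_{xy}$ forces an affine relation $T_2=uT_1+v$ with $u\in A_{xy}^{*}$ and $v\in A_{xy}$; applying $\pi_{xy}$ and using $\pi_{xy}(T_1)=\pi_{xy}(T_2)=0$ kills $v$, so $T_2=uT_1$. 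The crucial point is that $u$ descends to $R$: factorial closedness of $R$ in $B$ localizes, so $R_{xy}$ is factorially closed in $B_{xy}$, whence $B_{xy}^{*}=R_{xy}^{*}$ and therefore $A_{xy}^{*}=R_{xy}^{*}$; thus $u\in R_{xy}^{*}$. Rescaling $T_1,T_2$ by suitable powers of $x\in R_x^{*}$ and $y\in R_y^{*}$ (which preserves the two presentations) I would normalize $u$ to an element $r\in R\setminus\{0\}$.

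It then remains to assemble the symmetric algebra and identify the module. Set $M:=A_xT_1\cap A_yT_2=(A_x\cap rA_y)T_1$, an $A$-submodule of $B_{xy}$ lying inside $B_x\cap B_y=B$, with $M_x=A_xT_1$ and $M_y=A_yT_2$ free of rank one. Using $A=A_x\cap A_y$ one checks $A_x\cap rA_y=rA_y\cap A$, and Lemma \ref{ideal}(i) (with the roles of $x$ and $y$ interchanged) identifies this with $IA$, where $I:=rR_y\cap R$ is an ideal of $R$. Granting that $I$ is invertible, Lemma \ref{ideal}(ii) gives $I^n=r^nR_y\cap R$, so that $I^nA=r^nA_y\cap A$ matches the degree-$n$ part of $B$ in each local presentation; assembling these identifications (and using the intersection identities so that the two local isomorphisms glue) yields an $A$-algebra isomorphism $B\cong\mathrm{Sym}_A(IA)$. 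The ``in particular'' is then immediate: since $IA$ is projective, $\mathrm{Sym}_A(IA)$, and hence $B$, is faithfully flat over $A$.

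The main obstacle is exactly the invertibility of $I$, equivalently that $M=IA$ is an invertible $A$-module. On $D(x)\cup D(y)$ this is automatic from $M_x,M_y$ being free of rank one; the genuine difficulty lies at the primes containing both $x$ and $y$, where the gluing datum degenerates and local freeness is \emph{not} formal. Since invertibility is a local condition, I would base-change along $R\to R_{\mathfrak m}$ (retracts and all three hypotheses are stable under this by Lemma \ref{ct}(ii) and flatness) and reduce to proving that $M_{\mathfrak m}$ is free of rank one, i.e. $B_{\mathfrak m}={A_{\mathfrak m}}^{[1]}$, for each maximal ideal $\mathfrak m$ of $A$; equivalently, one aims to verify the hypotheses of Theorem \ref{bcw} for $B$ viewed as an $A$-algebra. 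Establishing this local triviality at the bad primes, after which faithfully-flat descent of projectivity promotes $I$ to an honest invertible ideal of $R$, is the crux on which the whole argument rests.
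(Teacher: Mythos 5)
Your argument tracks the paper's own proof almost step for step through the normalization and patching stages: the same use of Lemma \ref{ff}(ii) and Lemma \ref{easy0} to get $R=R_x\cap R_y$, $A=A_x\cap A_y$, $B=B_x\cap B_y$; the same device of subtracting $\pi$-images so that the gluing relation $T_2=uT_1+v$ loses its constant term; the same use of factorial closedness to pull the unit down to $R$ (the paper phrases this via $ab=(xy)^n$ rather than via $B_{xy}^{*}=R_{xy}^{*}$, but these are equivalent); and the same identification of the degree-one piece of $B=\bigoplus_{n\ge 0}M_nT^n$ with $IA$ via Lemma \ref{ideal}(i). Up to interchanging the roles of $x$ and $y$, your $I=rR_y\cap R$ is the paper's $I=aR_x\cap R$.

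However, there is a genuine gap exactly where you say there is one: the invertibility of $I$. You write ``granting that $I$ is invertible'' and then propose to obtain it by proving $B_{\m}={A_{\m}}^{[1]}$ at every maximal ideal $\m$ of $A$ so as to invoke Theorem \ref{bcw}, conceding that local triviality at the primes containing both $x$ and $y$ ``is the crux on which the whole argument rests.'' That step is never carried out, and nothing in your argument (or in the paper's toolkit) delivers it directly; it is essentially as strong as the lemma itself. The paper closes the gap by a short flatness-descent argument that avoids any local analysis at the bad primes: from the grading $B=\bigoplus_{n\ge 0}M_nT^n$ that you already established, $M_1=IA$ is (isomorphic to) a direct summand of $B$ as an $R$-module, hence $R$-flat because $B$ is $R$-flat; since $A$ is $R$-flat, $I\otimes_R A\cong IA=M_1$, so $I\otimes_R A$ is a flat $R$-module; faithfully flat descent along $R\to A$ then makes the ideal $I$ itself $R$-flat, and a non-zero finitely generated flat ideal of a Noetherian domain is invertible. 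With invertibility in hand, Lemma \ref{ideal}(ii) and (i) give $M_n=I^nA$ and hence $B\cong{\rm Sym}_A(IA)$, exactly as you outlined. So the missing idea is precisely this exploitation of the direct-summand structure of $B$ to manufacture flatness of $I$, rather than any verification of local triviality at primes containing $x$ and $y$.
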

\begin{proof}
Let $F, G \in B$ be such that $B_x= A_x[F]$ and $B_y=A_y[G]$. 
Let $\pi: B \to A$ be an $R$-algebra retraction.
Replacing $F$ and $G$ by $F-\pi(F)$ and $G-\pi(G)$, respectively,
we may assume that $\pi(F)=\pi(G)=0$. Now 
$$
B_{xy}=A_{xy}[F]=A_{xy}[G]
$$
and hence $F= \lambda G+ \mu$, for some $\lambda \in {A_{xy}}^*$
and $\mu \in A_{xy}$.   Since $\pi(F)=\pi(G)=0$, 
considering the extended retraction $\pi: B_{xy} \rightarrow A_{xy}$,
we have $\mu=\pi (\mu)$ (as $\mu \in A_{xy}$) $=0$.
Further, since $F \in B \subseteq B_y=A_y[G]$, we have $\lambda \in A_y\cap {A_{xy}}^*$.
Let $\lambda = a/y^m$ for some $a \in A$. 
Note that as $a$ is a unit in $A_{xy}$, there exists $b \in A$ such that
$ab= (xy)^n$ for some integer $n \ge 0$. Since $R$ is factorially closed in $B$, it follows that $a, b \in R\cap {R_{xy}}^*$. 
Thus $\lambda = a/y^m \in R_y \cap {R_{xy}}^*$.

Now $B$ is faithfully flat over $R$ and hence $A$ is faithfully flat over $R$ by Lemma \ref{ff}(ii). 
Since $y$ is  $R/xR$-regular and both $A, B$ are $R$-flat, $y$ is also $A/xA$-regular and $B/xB$-regular. 
Therefore, by Lemma \ref{easy0},    
$$R= R_x \cap R_y, ~~ A= A_x \cap A_y {\rm ~~ and ~~} B= B_x \cap B_y.$$ 
Let $T= G/y^m$. Then $F= aT$ and  
$$
B= B_x \cap B_y=A_x[F] \cap A_y[G]= A_x[aT]\cap A_y[T] \subseteq A_{xy}[T].
$$
Hence 
\begin{equation}\label{B}
B= \left( \bigoplus_{n \ge 0} A_xa^nT^n\right) \bigcap \left(\bigoplus_{n \ge 0} A_yT^n\right)=\bigoplus_{n \ge 0}M_n T^n, 
\end{equation}
where $M_n= a^nA_x\cap A_y$ for every $n \ge 0$.  Note that $M_n \subseteq A_x \cap A_y=A$; thus $M_n=a^nA_x \cap A$
and $M_n$ is an ideal of $A$, for each $n \ge 0$.

Set $I:= aR_x \cap R_y$. 
Then $I$ ($\subseteq R_x \cap R_y=R$) is an ideal of $R$ and by Lemma \ref{ideal}(i), $M_1=IA$.

Now, as $B$ is a flat $R$-algebra and $M_1 (\cong M_1T)$ is isomorphic to a
direct summand of $B$, we see that $M_1$ is a flat $R$-module. Hence, as $A$ is $R$-flat,
$I\otimes_R A$ is isomorphic to $IA$ ($=M_1$), and hence $I\otimes_R A$ is a flat $R$-module. 
Since $A$ is a faithfully flat $R$-algebra, 
it follows that the ideal $I$ is a flat $R$-module. Since $R$ is Noetherian, it follows that $I$ is an invertible ideal of $R$.
Therefore, by Lemma \ref{ideal} (ii), $I^n = a^n R_x \cap R$ and hence by Lemma \ref{ideal}(i), 
 $M_n \cong (a^nR_x \cap R)A=I^nA$.

Now by (\ref{B}), we have $B=\bigoplus I^nAT^n\cong{\rm Sym}_A(IA)$ for the invertible ideal $I$ of $R$. 
\end{proof}

\section{Retracts and rings of invariants of  ${\mathbb G}_a$-actions}

In this section we shall mainly study the ring of invariants $A$ of an $R$-algebra exponential map of  
$B=R^{[n]}$ when $A$ occurs as a retract of $B$, especially the relationship between $B$ and $A$,
and associated results. When $R$ contains $\mathbb Q$, we get corresponding results for the kernel $A$ of 
a locally nilpotent $R$-derivation of $B=R^{[n]}$ when $A$ occurs as a retract of $B$.
  
We first record an elementary result on symmetric algebras:
\begin{lem}\label{sym}
 Let $A$ be an integral domain and $B$ an $A$-algebra such that
 $B\cong {\rm Sym}_A(Q)$ for some invertible ideal $Q$ of $A$.
Then $A$ is a retract of $B$ and  $A=B^{\phi}$ for some $\phi(\neq Id) \in \Exp_A(B)$.
 \end{lem}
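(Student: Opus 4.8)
The plan is to exploit the $\bN$-grading on the symmetric algebra. Write $B \cong {\rm Sym}_A(Q) = \bigoplus_{n \ge 0} {\rm Sym}^n_A(Q)$, with ${\rm Sym}^0_A(Q) = A$. For the first assertion I would take $\pi : B \to B$ to be the projection onto the degree-zero component, i.e.\ the composite $B \to B/B_+ \xrightarrow{\sim} A \hookrightarrow B$, where $B_+$ is the irrelevant ideal. Since $B_+$ is an ideal and $B/B_+ \cong A$ canonically, $\pi$ is an $A$-algebra endomorphism; it is clearly idempotent with image $A$, so $A$ is a retract of $B$.

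For the second assertion I would construct $\phi$ directly as an exponential map, without assuming $\bQ \subseteq A$. Since $Q$ is an ideal of $A$, the inclusion $c := (Q \hookrightarrow A)$ is a nonzero $A$-linear map. By the universal property of the symmetric algebra, the $A$-linear map $Q \to B[U]$ sending $q \mapsto q + c(q)U$ (with $q$ placed in degree $1$ and $c(q)$ in degree $0$) extends uniquely to an $A$-algebra homomorphism $\phi : B \to B[U]$. I would then verify the two exponential axioms on the generating module $Q$: the identity $\varepsilon_0\phi_U(q) = q$ gives axiom (i), and $\phi_V\phi_U(q) = q + c(q)(V+U) = \phi_{V+U}(q)$ gives axiom (ii); both identities propagate to all of $B$ because each side is an $A$-algebra homomorphism agreeing on generators. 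Non-triviality ($\phi \ne Id$) is immediate since $c \ne 0$.

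The substantive step is to identify $B^{\phi}$ with $A$. I would localize at $S := A \setminus \{0\}$ and put $K := Q(A)$. As $Q$ is invertible of rank one, $S^{-1}Q$ is a one-dimensional $K$-vector space, so $S^{-1}B = {\rm Sym}_K(S^{-1}Q) = K[T] = K^{[1]}$, with the grading preserved so that ${\rm Sym}^n_A(Q) \hookrightarrow KT^n$. Over $K[T]$ the extended map $\phi$ reads $T \mapsto T + \beta U$ with $\beta \ne 0$ (because $c \ne 0$), a nontrivial translation whose ring of invariants is exactly $K$. Hence $B^{\phi} \subseteq B \cap (S^{-1}B)^{\phi} = B \cap K$. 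Since each ${\rm Sym}^n_A(Q) \cong Q^n$ is torsion-free over $A$, the domain $B$ embeds grade-by-grade into $S^{-1}B$, so an element of $B$ lying in the degree-zero part $K$ must already be concentrated in degree zero; thus $B \cap K = {\rm Sym}^0_A(Q) = A$. Combined with the obvious inclusion $A \subseteq B^{\phi}$, this yields $B^{\phi} = A$.

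I expect the main obstacle to be ruling out invariants of positive degree: a nonzero $c$ could a priori have a nontrivial kernel contributing degree-one invariants. This is precisely why I would route the computation through the localization at $S$, where $S^{-1}c$ becomes multiplication by a \emph{nonzero} scalar over the field $K$ and the translation is visibly free of higher invariants, rather than attempting to analyze $\phi$ directly on $B$.
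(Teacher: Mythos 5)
Your proof is correct, and it takes a genuinely different route from the paper's at both key steps. For the construction of $\phi$, the paper localizes: since $Q$ is finitely generated, $B_a = A_a[F]$ for some $a \in A\setminus\{0\}$ and some $F$ transcendental over $A$, and one sets $\phi'(F) = F + a^mU$ on $B_a$, with $m$ chosen so large that $\phi'$ carries a finite set of $A$-algebra generators of $B$ into $B[U]$; the desired map is the restriction $\phi = \phi'|_B$. Your intrinsic construction via the universal property of ${\rm Sym}_A(Q)$, sending $q$ (in degree one) to $q + c(q)U$, avoids every choice --- of $a$, $F$, $m$, and generators --- and the axioms are verified cleanly on the generating module $Q$. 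For the identification $B^{\phi} = A$, the paper argues abstractly: $A \subseteq B^{\phi}$, $\td_A B = 1 = \td_{B^{\phi}}B$ by Lemma \ref{lnd}(iii), and $A$ is algebraically closed in $B$ by Lemma \ref{ct}(i) (using the retract property just established), which forces $B^{\phi} = A$. You instead compute: over $K = Q(A)$ the map becomes a translation $T \mapsto T + \beta U$ with $\beta \neq 0$, whose invariant ring is exactly $K$ in any characteristic (the coefficient of $U^{\deg f}$ in $f(T+\beta U)$ is $\beta^{\deg f}$ times the leading coefficient of $f$, so it cannot vanish), and the graded, componentwise torsion-free embedding $B \hookrightarrow K[T]$ gives $B \cap K = A$. (Incidentally, your worry about a kernel of $c$ is moot: $c$ is the inclusion $Q \hookrightarrow A$, hence injective, so there are visibly no homogeneous invariants of positive degree; but the localization argument is needed anyway for non-homogeneous elements.) The paper's route is shorter given its stock of preliminary lemmas; yours is more self-contained and explicit, replacing the transcendence-degree and algebraic-closedness machinery with a direct computation, and it produces a canonical exponential map rather than one depending on auxiliary choices.
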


\begin{proof}
Clearly $A$ is a retract of $B$. We now show that $A= B^{\phi}$ for some $\phi(\neq Id) \in \Exp_A(B)$.
Let $K$ denote the field of fractions of $A$ and $S=A\setminus\{0\}$. Then 
$S^{-1}Q=K$ and hence $$S^{-1}B \cong {\rm Sym}_K(K)=K^{[1]}= (S^{-1}Q)^{[1]}=(S^{-1}A)^{[1]}.$$
Therefore,
since $Q$ is a finitely generated $A$-module,  there exists
an element $a \in S$ such that $$B_a= {A_a}[F]$$ for some $F\in B$ which is transcendental over $A$. 

Let $x_1, \dots, x_n \in B$ generate $B$ as an algebra over $A$. Then
there exists an integer $m \ge 0$ such that $a^m x_i \in A[F]\subseteq B$ for every $i$, $1\le i\le n$.

Now, the $A_a$-algebra homomorphism $\phi':B_a (=A_a[F]) \to B_a[U]$ defined by 
$$\phi'(F)=F+a^mU$$
is clearly a non-trivial exponential map
satisfying $\phi'(B) \subseteq B[U]$.
Hence $\phi:=\phi'|_B$ is a nontrivial exponential map of $B$. 
Since $A \subseteq B^{\phi}$, $A$ is algebraically closed in $B$ and $\td_A B=1=\td_{B^{\phi}}B$ (by Lemma \ref{lnd} (iii)), we conclude that $A= B^{\phi}$. 
\end{proof}

The next result shows that if $\phi$ is a  nontrivial exponential map of a UFD $B$, 
and if there exists a retraction from $B$ to $A:=B^{\phi}$, then $B=A^{[1]}$.

\begin{prop}\label{ufdp}
Let $B$ be a UFD  and $A$ a subring of $B$.
Then the following statements are equivalent:
\begin{enumerate}
 \item [\rm (I)] $A$ is a retract of $B$ and $A=B^{\phi}$ for some $\phi(\neq Id) \in \Exp_A(B)$.
 \item [\rm (II)] $B=A^{[1]}$.
\end{enumerate}
In particular, if $R$ is a UFD  and $A$ an $R$-subalgebra of the polynomial ring $B=R^{[n]}$
satisfying ({\rm I}), then $B= A^{[1]}$. 
\end{prop}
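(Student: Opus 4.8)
The plan is to establish the two implications separately, with essentially all the content sitting in $\mathrm{(I)} \Rightarrow \mathrm{(II)}$. The implication $\mathrm{(II)} \Rightarrow \mathrm{(I)}$ is a one-line consequence of Lemma \ref{sym}: if $B = A^{[1]}$ then $B \cong {\rm Sym}_A(A)$, and the unit ideal $A$ is (trivially) an invertible ideal of $A$, so Lemma \ref{sym} immediately yields that $A$ is a retract of $B$ and that $A = B^{\phi}$ for some nontrivial $\phi \in \Exp_A(B)$.

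For $\mathrm{(I)} \Rightarrow \mathrm{(II)}$ the strategy is to pass to a localization where the $\A^1$-fibration is already trivial and then descend one prime at a time via the Russell--Sathaye/Das--Dutta criterion (Theorem \ref{rsdd}). First, by Lemma \ref{lnd}(i) the subring $A$ is factorially closed in $B$ and is itself a UFD, and by Lemma \ref{lnd}(iii) there is a nonzero $a \in A$ with $B_a = A_a^{[1]}$. Factoring $a$ in the UFD $B$, factorial closedness forces every prime factor of $a$ to lie in $A$, and a short argument (again using factorial closedness) shows each such factor is prime in $A$ as well. Thus, writing $q_1, \dots, q_s$ for the distinct prime factors of $a$, each $q_j$ is simultaneously prime in $A$ and in $B$, and since inverting $a$ is the same as inverting its radical we get $B_{q_1\cdots q_s} = B_a = A_{q_1 \cdots q_s}^{[1]}$.

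The heart of the proof is then a downward induction that strips off the $q_j$ one at a time. Set $A^{(j)} = A_{q_1\cdots q_j}$ and $B^{(j)} = B_{q_1 \cdots q_j}$, so that $B^{(s)} = (A^{(s)})^{[1]}$ while $B^{(0)} = B$ and $A^{(0)} = A$. Assuming $B^{(j)} = (A^{(j)})^{[1]}$, I apply Theorem \ref{rsdd} to $A^{(j-1)} \subseteq B^{(j-1)}$ with the prime $p = q_j$: condition (i) holds because $q_j$, being coprime to $q_1,\dots,q_{j-1}$, remains a (nonunit) prime in both $A^{(j-1)}$ and $B^{(j-1)}$; condition (ii) is exactly $B^{(j-1)}[1/q_j] = B^{(j)} = (A^{(j)})^{[1]} = (A^{(j-1)}[1/q_j])^{[1]}$; and condition (iii) holds because $B^{(j-1)}$, a localization of the UFD $B$, is again a UFD, whence $\bigcap_n q_j^n B^{(j-1)} = 0$. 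The retraction $\pi$ localizes to a retraction $B^{(j-1)} \to A^{(j-1)}$ by Lemma \ref{ct}(ii), supplying the retraction hypothesis of Theorem \ref{rsdd}. The theorem then gives $B^{(j-1)} = (A^{(j-1)})^{[1]}$, completing the inductive step; at $j=0$ this reads $B = A^{[1]}$.

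I expect the main obstacle to be the bookkeeping around primality: Theorem \ref{rsdd} demands a \emph{single} prime $p$ with $B[1/p] = A[1/p]^{[1]}$, whereas Lemma \ref{lnd}(iii) only produces an element $a$ that may involve several primes. Factorial closedness of $A$ in $B$ is precisely the tool that (a) guarantees the prime factors of $a$ lie in $A$ and are prime there, and (b) makes the descent work, so that at each stage the localized triviality $B[1/q_j] = A[1/q_j]^{[1]}$ is available at the correct localization. Finally, the ``in particular'' statement is immediate: for a UFD $R$ the polynomial ring $B = R^{[n]}$ is again a UFD, so the equivalence just established applies verbatim.
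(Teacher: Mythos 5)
Your proof is correct and follows essentially the same route as the paper's: both rest on Lemma \ref{lnd}(i),(iii) to get $B_a=A_a^{[1]}$, factor $a$ into elements that are prime in both $A$ and $B$ via factorial closedness, and then descend to $B=A^{[1]}$ by repeated application of Theorem \ref{rsdd}; your explicit downward induction simply spells out what the paper compresses into ``repeated application.'' The only cosmetic difference is in (II)$\Rightarrow$(I), where you invoke Lemma \ref{sym} while the paper just writes down the exponential map $\phi(T)=T+U$ directly.
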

\begin{proof}
${\rm (I)} \Rightarrow {\rm (II)}$: 
By Lemma \ref{lnd}(i) (also by Lemma \ref{ct}(iv)),  $A$ is a UFD. Since $A$ is factorially closed in $B$
(Lemma \ref{lnd}(i)), any prime element $p$ of $A$ remains a prime element in $B$. Moreover, since $B$ is a UFD, 
$\bigcap_{n\geqslant0} p^n B=(0)$. 

By Lemma \ref{lnd}(iii), there exists a non-zero element 
$a \in A$ such that $B_{a}={A_{a}}^{[1]}$. If $a \in A^{*}$, then $B=A^{[1]}$. Otherwise, 
let $a= p_1^{m_1}\cdots p_{\ell}^{m_\ell}$ be a prime factorization
of $a$ in $A$. Then, each $p_i$ is a prime element in $B$ and $\bigcap_{n\geqslant0} {p_i}^n B=(0)$ for $1\leqslant i\leqslant \ell$. 
Therefore, by repeated application of Theorem \ref{rsdd}, $B=A^{[1]}$.

\smallskip

${\rm (II)} \Rightarrow {\rm (I)}$: Clearly, $A$ is a retract of $B$. Now if $B=A[T]=A^{[1]}$, then the $A$-algebra homomorphism
$\phi: B \rightarrow B[U]$ defined by $\phi(T)= T+U$ is a nontrivial exponential map of $B$ with $A=B^{\phi}$.
\end{proof}

\begin{cor}\label{ufdpc}
Let $B$ be a UFD containing $\bQ$ and $A$ a subring of $B$.
Then the following statements are equivalent:
\begin{enumerate}
 \item [\rm (I)] $A$ is a retract of $B$ and $A=Ker ~ D$ for some $D(\neq 0) \in LND(B)$.
 \item [\rm (II)] $B=A^{[1]}$.
\end{enumerate}
In particular, if $R$ is a UFD containing $\bQ$ and $A$ an $R$-subalgebra of the polynomial ring $B=R^{[n]}$
satisfying (\rm I), then $B= A^{[1]}$. 
\end{cor}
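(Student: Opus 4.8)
The plan is to deduce this corollary directly from Proposition \ref{ufdp} via the standard dictionary between locally nilpotent derivations and exponential maps, which is available precisely because $\bQ \subseteq B$. Recall from Section 2 that when $B$ contains $\bQ$, every locally nilpotent derivation $D$ of $B$ determines an exponential map $\phi = \sum_{n \ge 0}\frac{D^n}{n!}T^n$, and conversely every exponential map arises in this way from a unique locally nilpotent derivation. Under this correspondence I would use two compatibilities: first, $D \ne 0$ if and only if $\phi \ne Id$, since the coefficient of $T$ in $\phi$ is exactly $D$; second, $Ker~D = B^{\phi}$, since $\phi(b) = b$ forces $D^n(b) = 0$ for all $n \ge 1$, which is in turn equivalent to $D(b) = 0$.

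First I would treat $(\mathrm{I}) \Rightarrow (\mathrm{II})$. Starting from a retraction of $B$ onto $A$ together with a nonzero $D \in LND(B)$ satisfying $A = Ker~D$, I would pass to the associated exponential map $\phi$. Because $A = Ker~D$, the derivation $D$ annihilates $A$, so $\phi$ is automatically an $A$-algebra exponential map; moreover $\phi \ne Id$ and $B^{\phi} = Ker~D = A$. Hence the hypotheses of part $(\mathrm{I})$ of Proposition \ref{ufdp} are satisfied, and that proposition yields $B = A^{[1]}$.

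For the reverse implication $(\mathrm{II}) \Rightarrow (\mathrm{I})$, writing $B = A[T]$, the $A$-algebra projection $T \mapsto 0$ exhibits $A$ as a retract of $B$, while the partial derivative $D := \partial/\partial T$ is a nonzero locally nilpotent $A$-derivation of $B$ with $Ker~D = A$. Alternatively, one may apply $(\mathrm{II}) \Rightarrow (\mathrm{I})$ of Proposition \ref{ufdp} to produce the exponential map and then recover $D$ as its $T$-linear part. The concluding ``in particular'' clause then follows at once, since for a UFD $R$ containing $\bQ$ the ring $B = R^{[n]}$ is again a UFD containing $\bQ$, so the equivalence applies verbatim.

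I do not anticipate a genuine obstacle, as the entire statement is merely a translation of Proposition \ref{ufdp} through the derivation--exponential-map correspondence. The only point demanding care is the verification of the two compatibilities of that dictionary, namely $D \ne 0 \Leftrightarrow \phi \ne Id$ and $Ker~D = B^{\phi}$; both, however, are immediate consequences of the explicit formula defining $\phi$, so there is no substantive difficulty to overcome.
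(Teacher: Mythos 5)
Your proposal is correct and matches the paper's intent exactly: the paper states Corollary \ref{ufdpc} without proof, precisely because it is the direct translation of Proposition \ref{ufdp} through the correspondence between locally nilpotent derivations and exponential maps recalled in Section 2 (valid since $\bQ \subseteq B$). Your verification of the two compatibilities ($D \neq 0 \Leftrightarrow \phi \neq Id$ and $Ker~D = B^{\phi}$, with $\phi$ automatically an $A$-algebra map since $D$ kills $A$) supplies exactly the routine details the paper leaves implicit.
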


The following example shows that in Proposition \ref{ufdp} or Corollary \ref{ufdpc}, we cannot relax the hypothesis that $B$ is a UFD.

\begin{ex}
{\em  
Consider the ring $$B:= \bC[X,Y,Z]/(XY-Z^2),$$ a Noetherian normal domain. Let $x$, $y$ and $z$ denote respectively
the images of $X$, $Y$ and $Z$ in $B$. Then $A:= \bC[x]$ is a retract of $B$ under the retraction map
$\pi:B \to A$ defined by $$\pi(y)=\pi(z):=0.$$ Also, the $\bC$-linear derivation $D:B\to B$ defined by
$$
D(x):=0,~~ D(z):=x {\rm ~~ and ~~ } D(y):=2z
$$ is a locally nilpotent derivation whose kernel is $A$. However $B\neq A^{[1]}$
since $B\neq \bC^{[2]}$. 
}
\end{ex}

We now prove our main theorem. 

\begin{thm}\label{rlnd1}
Let $R$ be a Noetherian normal domain and $A$ an $R$-subalgebra of the polynomial ring $B:=R^{[n]}$.
Then the following statements are equivalent:
\begin{enumerate}
 \item [\rm(I)] $A$ is a retract of $B$ and $A=B^{\phi}$ for some $\phi(\neq Id) \in \Exp_A(B)$.
\item[\rm(II)] $B\cong {\rm Sym}_A(IA)$ for some invertible ideal $I$ of $R$. 
\end{enumerate}
\end{thm}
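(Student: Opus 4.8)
The equivalence splits naturally into the two implications, and I would organize the proof accordingly. The direction $(\mathrm{II}) \Rightarrow (\mathrm{I})$ is the easy half: if $B \cong \mathrm{Sym}_A(IA)$ for an invertible ideal $I$ of $R$, then $IA$ is an invertible ideal of $A$ (since $A$ is $R$-flat, being a retract of the faithfully flat $R$-algebra $B = R^{[n]}$, by Lemma \ref{ff}(ii)), and Lemma \ref{sym} applies verbatim to give that $A$ is a retract of $B$ and $A = B^\phi$ for some nontrivial $\phi \in \Exp_A(B)$. So this direction is essentially a citation of Lemma \ref{sym}, once one checks that $IA$ is genuinely invertible over $A$.

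The substance is the direction $(\mathrm{I}) \Rightarrow (\mathrm{II})$, and here the strategy is to reduce to the patching result, Lemma \ref{patch}. The plan is to produce two nonzero elements $x, y \in R$ satisfying the three hypotheses of Lemma \ref{patch}, namely that $y$ is $(R/xR)$-regular, $B_x = {A_x}^{[1]}$, and $B_y = {A_y}^{[1]}$. By Lemma \ref{lnd}(iii), since $\phi$ is nontrivial there is a nonzero $a \in A$ with $B_a = {A_a}^{[1]}$. The difficulty is that this localizing element $a$ lives in $A$, not in $R$, so it does not directly feed into Lemma \ref{patch}, which requires the localizing elements to come from $R$. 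The main obstacle is therefore descending from the invariant ring $A$ to the base $R$: one must arrange for the $\mathbb{A}^1$-triviality to hold after inverting elements of $R$ itself.

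To overcome this, I would exploit that $R$ is a Noetherian normal domain, so $R$ is locally a UFD and in particular $R = \bigcap_{\mathrm{ht}\,\mathfrak{p} = 1} R_\mathfrak{p}$ is an intersection of DVRs. The idea is to work over the generic and codimension-one points of $\Sp R$: after passing to $Q(R)$, or to the localizations $R_\mathfrak{p}$ at height-one primes, the base becomes a UFD (or a DVR), so Proposition \ref{ufdp} gives $B \otimes_R R_\mathfrak{p} = (A \otimes_R R_\mathfrak{p})^{[1]}$ there. Collecting the finitely many height-one primes where the triviality might fail and clearing denominators, one should be able to find a single nonzero $x \in R$ such that $B_x = {A_x}^{[1]}$; then, using that $a \in A$ satisfies $B_a = {A_a}^{[1]}$ together with faithful flatness to track a suitable norm or coefficient of $a$ back into $R$ (as is done inside the proof of Lemma \ref{patch}, where $ab = (xy)^n$ forces factors into $R$ since $R$ is factorially closed in $B$), one produces a second element $y \in R$ giving $B_y = {A_y}^{[1]}$. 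The elements $x$ and $y$ can be chosen coprime enough that $y$ is $(R/xR)$-regular, using that $R$ is a normal (hence integrally closed) Noetherian domain.

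Once the two elements $x, y \in R$ are in hand with the three patching hypotheses verified, Lemma \ref{patch} applies directly and yields $B \cong \mathrm{Sym}_A(IA)$ for some invertible ideal $I$ of $R$, which is exactly $(\mathrm{II})$. Thus the whole argument funnels through Lemma \ref{patch}, and the genuinely hard step is the first one: manufacturing the required $R$-elements $x$ and $y$ from the $A$-element $a$ supplied by Lemma \ref{lnd}(iii), which is where normality of $R$ and factorial closedness of $R$ in $B$ do the essential work.
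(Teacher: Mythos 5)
Your overall architecture coincides with the paper's: (II) $\Rightarrow$ (I) via Lemma \ref{sym} (and your check that $IA$ is invertible over $A$ is fine), and (I) $\Rightarrow$ (II) funnelled through Lemma \ref{patch}, with Proposition \ref{ufdp} applied over UFD localizations of $R$ to manufacture the two elements $x,y\in R$. Your first element $x$ is obtained essentially as in the paper: localize at a single height-one prime $\p$ of $R$ (so $R_{\p}$ is a DVR and $B_{\p}$ a UFD), apply Proposition \ref{ufdp} to get $B_{\p}={A_{\p}}^{[1]}$, and clear denominators using finite generation of $B$ over $R$ --- note that one height-one prime suffices; there is no need (and a priori no finiteness) to ``collect the primes where triviality might fail''. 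The genuine gap is your construction of the second element $y$. You propose to extract $y$ from the element $a\in A$ of Lemma \ref{lnd}(iii) by tracking a ``norm or coefficient'' of $a$ into $R$ via factorial closedness, and then to choose $x$ and $y$ ``coprime enough'' so that $y$ is $(R/xR)$-regular. Neither step is an argument: inverting an element of $R$ extracted from $a$ does not invert $a$ (the invariant $a$ is in general a nonconstant polynomial, and nothing forces $B_y={A_y}^{[1]}$ for such a $y$), and ``coprime enough'' does not deliver $(R/xR)$-regularity. In fact the element $a$ plays no role at the level of the theorem; it is consumed entirely inside Proposition \ref{ufdp}.

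The mechanism your sketch is missing is the following. Set $\Delta:=\mathrm{Ass}_R(R/xR)$ and $S:=R\setminus\bigcup_{\p\in\Delta}\p$. Normality enters exactly here: a Noetherian normal domain satisfies Serre's condition $S_2$, so every $\p\in\Delta$ has height one, whence $S^{-1}R$ is a semilocal Dedekind domain, i.e.\ a PID, in particular a UFD. The retract and the exponential map localize (Lemma \ref{ct}(ii) and Lemma \ref{lnd}(ii)), so Proposition \ref{ufdp} applies a second time to give $S^{-1}B=(S^{-1}A)^{[1]}$, and clearing denominators yields $y\in S$ with $B_y={A_y}^{[1]}$. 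This single choice settles both remaining hypotheses of Lemma \ref{patch} at once, because the zerodivisors on $R/xR$ are precisely $\bigcup_{\p\in\Delta}\p$, so any $y\in S$ is automatically $(R/xR)$-regular. Your instinct that normality of $R$ is the essential input was correct, but it is used to control $\mathrm{Ass}_R(R/xR)$, not to arrange coprimality of $x$ and $y$.
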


\begin{proof}
${\rm (II)} \Rightarrow {\rm (I)}$: Follows from  Lemma \ref{sym}.

\smallskip

${\rm (I)} \Rightarrow {\rm (II)}$: 
Let $\p$ be a  height one prime ideal of $R$. Then $R_{\p}$ is a discrete valuation ring and hence a UFD. 
By Lemma \ref{lnd}, $\phi$ induces a nontrivial exponential map $\phi_\p$ on $B_\p$ with the ring of invariants
$A_\p$; and by Lemma \ref{ct}(ii), 
$A_\p$ is a retract of $B_\p$. Therefore, by Proposition \ref{ufdp}, $B_\p= {A_\p}^{[1]}$. 
Since $B$ is a finitely generated $R$-algebra, 
there exists an element $x\in R\setminus \p$ such that $B_x= {A_x}^{[1]}$.

Set $\Delta:={\rm Ass}_R(R/xR)$ and $S:= R \setminus \bigcup_{\p\in U}\p$. 
Since $R$ is a Noetherian normal domain, $\hgt \p=1$ for each $\p \in \Delta$. Hence
$S^{-1}R$ is a semilocal Dedekind domain and therefore a PID. 
By Lemma \ref{ct}(ii), $S^{-1}A$ is a retract of $S^{-1}B$. Also, by Lemma \ref{lnd}(ii), 
$\phi$ induces a nontrivial exponential map $S^{-1}\phi$ of $S^{-1}B$ with ring of invariants $S^{-1}A$. 
Therefore, by again applying Proposition \ref{ufdp}, we get $S^{-1}B=(S^{-1}A)^{[1]}$. 
Since $B$ is a finitely generated $R$-algebra, there exists an element $y \in S$ such that $B_y={A_y}^{[1]}$. 

Since $y$ is $(R/xR)$-regular, by Lemma \ref{patch}, 
we have $B\cong {\rm Sym}_A(IA)$ for some invertible ideal $I$ of $R$. 
\end{proof}

\begin{cor}\label{rlnd1c}
Let $R$ be a Noetherian normal domain containing $\bQ$ and $A$ an $R$-subalgebra of the polynomial ring $B:=R^{[n]}$.
Then the following statements are equivalent:
\begin{enumerate}
 \item [\rm(I)] $A$ is a retract of $B$ and $A=Ker ~ D$ for some $D(\neq 0) \in LND_{R}(B)$.
\item [\rm(II)] $B\cong {\rm Sym}_A(IA)$ for some invertible ideal $I$ of $R$. 
\end{enumerate}
\end{cor}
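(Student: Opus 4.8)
The plan is to derive Corollary \ref{rlnd1c} directly from Theorem \ref{rlnd1}, using the standard correspondence between locally nilpotent derivations and exponential maps that is available because $R$ (and hence $B$) contains $\bQ$. Since condition (II) is word-for-word identical in the two statements, I only need to verify that condition (I) of the Corollary is equivalent to condition (I) of Theorem \ref{rlnd1}; the equivalence (I) $\Leftrightarrow$ (II) will then be inherited from the Theorem.

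First I would invoke the dictionary recorded in Section 2: when $R \supseteq \bQ$, every $D \in LND_R(B)$ produces an exponential map $\phi_D := \sum_{n \ge 0} \frac{D^n}{n!} T^n \in \Exp_R(B)$, and conversely every exponential map is $\phi_D$ for a unique locally nilpotent derivation $D$. I would then note the two compatibilities that make the translation exact: $D \neq 0$ if and only if $\phi_D \neq Id$, and $Ker~D = B^{\phi_D}$, since $Db = 0$ holds precisely when $\phi_D(b) = b$. Thus, under this bijection, the condition ``$A = Ker~D$ for some nonzero $D$'' corresponds to ``$A = B^{\phi}$ for some $\phi \neq Id$''.

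The only subtlety I anticipate --- and it is merely bookkeeping rather than a genuine obstacle --- is the discrepancy in the base ring attached to the two operators: the Corollary speaks of $D \in LND_R(B)$, whereas Theorem \ref{rlnd1} speaks of $\phi \in \Exp_A(B)$. I would resolve this as follows. If $D \in LND_R(B)$ is nonzero with $A = Ker~D$, then $D$ kills $A$ by definition, so $D$ is in fact $A$-linear and $\phi_D$ is an $A$-algebra exponential map; hence $\phi_D \neq Id$ lies in $\Exp_A(B)$ with $B^{\phi_D} = A$, giving (I) of the Corollary $\Rightarrow$ (I) of the Theorem. Conversely, given $\phi \neq Id$ in $\Exp_A(B)$ with $A = B^{\phi}$, the associated derivation $D$ is $A$-linear, hence $R$-linear because $R \subseteq A$, is nonzero, and satisfies $Ker~D = B^{\phi} = A$; this gives (I) of the Theorem $\Rightarrow$ (I) of the Corollary. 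Combining these two directions with the equivalence furnished by Theorem \ref{rlnd1} completes the argument.
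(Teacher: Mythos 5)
Your proof is correct and is exactly the intended argument: the paper states Corollary \ref{rlnd1c} without a separate proof, treating it as an immediate consequence of Theorem \ref{rlnd1} via the correspondence $D \leftrightarrow \phi_D=\sum_{n\ge 0}\frac{D^n}{n!}T^n$ recorded in Section 2. Your extra care in checking that $D\in LND_R(B)$ with $Ker~D=A$ is automatically $A$-linear (so that $\phi_D\in \Exp_A(B)$), and conversely, is precisely the bookkeeping the paper leaves implicit.
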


The following is an explicit example of the situation 
where $R$ is a Dedekind domain,  $B=R[X,Y]$ and $A$ is an $R$-subalgebra retract of $B$ such that
$B \cong {\rm Sym}_A(IA)$ and $A \cong {\rm Sym}_R(J)$ for some non-principal invertible ideals $I$ and $J$ of $R$.
In Corollary \ref{ctd1cor2}, we shall see that any retract $A$ of $B$ satisfying $\td_RA=1$ will be the symmetric algebra of an invertible
ideal of $R$.

\begin{ex}\label{rlndex1}
{\em
Let $\bR[\alpha, \beta]=\bR^{[2]}$, 
$$
R:=\frac{\bR[\alpha, \beta]}{(\alpha^2 + \beta^2-1)}
$$ and $a, b$ denote, respectively,  the
images of $\alpha, \beta$ in $R$. Let $B:=R[X,Y]=R^{[2]}$ and $D \in LND_{R}(B)$ be defined by
$$D(X):=a {\rm~~~~and~~~~} D(Y):=b-1.$$ Set $A := Ker ~ D$, 
$$
u:= aY + (1-b)X ~ \text{and} ~ v:= (1+b)Y+aX.
$$ 
Let 
$$
F:= aY-(1+b)X  ~ \text{and} ~ G:= (1-b)Y-aX \in B.
$$
Then $u,v \in A$, 
$$B_{(1+b)}=R_{(1+b)}[v,F] {\rm~~~~and~~~~} B_{(1-b)}=R_{(1-b)}[u,G].$$ It follows that 
$$A_{(1+b)}=R_{(1+b)}[v] {\rm~~~~and~~~~} A_{(1-b)}=R_{(1-b)}[u].$$  Also
$$B_{(1+b)}=A_{(1+b)}[F] {\rm~~~~and~~~~} B_{(1-b)}=A_{(1-b)}[G].$$
Since $(1+b)$ and $(1-b)$ are comaximal ideals of $R$, we have
$$A=R[u,v]={\rm Sym}_{R}(J) {\rm~~~~where~~~~} J=(a,1-b)R$$
and 
$$B= A[F, G]= {\rm Sym}_{A}(IA), {\rm~~~~where~~~~} I=(a,1+b)R.$$ 
If $ \pi : B \rightarrow A$ is defined by
$$ \pi(X):=\frac{u}{2} {\rm~~~~and~~~~}  \pi(Y):=\frac{v}{2},$$ then $\pi$ is a retraction. 
Since $I$ and $J$ are not principal, $A \neq R^{[1]}$ and $B \neq A^{[1]}$ by Lemma \ref{eh}.  
}
\end{ex}

\begin{rem}
{\em Let ${\mathbb Q} \subseteq k\subseteq A \subseteq B= k^{[n]}$. 
Suppose $A$ is the kernel of a non-zero locally nilpotent derivation on $B$.
In general $B$ need not be flat over $A$. For example, consider the locally nilpotent derivation
$D: B = k[X,Y,Z,T] \to k[X,Y,Z,T]$ defined by 
$$D(X)=D(Y)=0, ~~ D(Z)=X {\rm~~~~and~~~~} D(T)=Y.$$ Then
$$A={\rm Ker}~D= k[X,Y, XT-YZ].$$ Clearly, $B$ is not flat over $A$ as going down fails because
$$(X,Y)B \cap A=(X,Y, XT-YZ)A \neq (X,Y)A.$$ 
In this case, $A$ is not a retract of $B$. 
Similarly, if $A$ is a retract of $B$, then $B$ need not be flat over $A$ (cf. Example \ref{nonflat}). 
However, Theorem \ref{rlnd1} shows that, if $A$ is both a retract and the kernel of a non-zero locally nilpotent derivation on $B$, 
then $B$ is faithfully flat over $A$. 
}
\end{rem}

\section{On the questions of Costa}
In this section, we discuss the following questions of D.L. Costa (\cite[Section 4]{C}).

\smallskip
\noindent
{\bf Question 1 :} What are the retracts of $R[X_1, X_2]$  when R is a normal domain, or even a Dedekind domain? 

\smallskip
\noindent
{\bf Question 2 :} What are the retracts of  $R[X_1, X_2, X_3]$ which have transcendence degree $2$ over $R$, 
where $R$ is a UFD or a field?

\smallskip
\noindent
{\bf Question 3 :} Is every retract of $k[X_1,X_2, \dots, X_n]$ a polynomial ring over $k$?

\smallskip

For convenience, we first record two well-known results on symmetric algebras and affine fibrations.

\begin{lem}\label{symretract}
Let $R$ be a ring and $A \cong {\rm Sym}_R (M)$ for a finitely generated projective $R$-module $M$.
Then $R$ is a retract of $A$ and $A$ is an $R$-algebra retract of $R^{[m]}$ for some integer $m$.    
\end{lem}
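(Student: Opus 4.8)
The plan is to treat the two assertions separately, both via the functoriality of the symmetric algebra construction, and neither requires anything beyond formal bookkeeping.

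First I would establish that $R$ is a retract of $A$. Identifying $A$ with ${\rm Sym}_R(M) = \bigoplus_{n \ge 0} {\rm Sym}^n_R(M)$, this is an $\bN$-graded $R$-algebra whose degree-zero part is $R$. The natural augmentation $\varepsilon: A \to R$, which is the identity on $R$ and kills the irrelevant ideal $A_+$, is an $R$-algebra homomorphism satisfying $\varepsilon \circ \iota = {\rm id}_R$, where $\iota: R \hookrightarrow A$ is the structural inclusion. Setting $\pi := \iota \circ \varepsilon: A \to A$, one checks that $\pi^2 = \iota \circ (\varepsilon \circ \iota) \circ \varepsilon = \iota \circ \varepsilon = \pi$ and $\pi(A) = \iota(R) = R$, so $\pi$ is a retraction of $A$ onto $R$.

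For the second assertion I would use that a finitely generated projective module is a direct summand of a free module. Choose $R$-linear maps $s: M \to R^m$ and $p: R^m \to M$ with $p \circ s = {\rm id}_M$, exhibiting $M$ as a direct summand of $R^m$. Since ${\rm Sym}_R(-)$ is a functor from $R$-modules to $R$-algebras, these induce $R$-algebra homomorphisms ${\rm Sym}_R(s): A \to {\rm Sym}_R(R^m)$ and ${\rm Sym}_R(p): {\rm Sym}_R(R^m) \to A$, where ${\rm Sym}_R(R^m) = R[X_1, \dots, X_m] = R^{[m]}$. Functoriality gives
$$ {\rm Sym}_R(p) \circ {\rm Sym}_R(s) = {\rm Sym}_R(p \circ s) = {\rm Sym}_R({\rm id}_M) = {\rm id}_A, $$
so $j := {\rm Sym}_R(s)$ embeds $A$ as an $R$-subalgebra of $R^{[m]}$, with ${\rm Sym}_R(p)$ as a left inverse.

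Finally I would verify that $\pi' := j \circ {\rm Sym}_R(p): R^{[m]} \to R^{[m]}$ is the desired retraction: since ${\rm Sym}_R(p) \circ j = {\rm id}_A$, we obtain $(\pi')^2 = j \circ ({\rm Sym}_R(p) \circ j) \circ {\rm Sym}_R(p) = j \circ {\rm Sym}_R(p) = \pi'$, while $\pi'(R^{[m]}) = j(A)$ because ${\rm Sym}_R(p)$ is surjective. Thus $A \cong j(A)$ is an $R$-algebra retract of $R^{[m]}$. I do not expect any genuine obstacle: the whole argument is purely formal, the only substantive inputs being the standard facts that a finitely generated projective module splits off a free module and that ${\rm Sym}_R$ carries a split surjection of modules to a split surjection of algebras.
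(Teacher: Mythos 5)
Your proof is correct and is essentially the paper's own argument, spelled out in full: the paper likewise disposes of the first claim via the obvious augmentation ("clearly $R$ is a retract of $A$") and obtains the second by noting that $M$, being finitely generated projective, is an $R$-module retract of $R^m$, whence ${\rm Sym}_R(M)$ is an $R$-algebra retract of ${\rm Sym}_R(R^m)=R^{[m]}$ by functoriality. The only difference is that you make explicit the splitting maps and the idempotence checks that the paper leaves to the reader.
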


\begin{proof}
Clearly $R$ is a retract of $A$. If $M$ is generated by $m$ elements over $R$, then 
clearly $M$ is an $R$-module retract of a free $R$-module $F=R^m$ and hence $A$ is an $R$-algebra retract
of ${\rm Sym}_R (F)=R^{[m]}$.
\end{proof}

The following generalisation of the above result is an easy consequence of Asanuma's structure theorem for 
${\A}^{n}$-fibrations (\cite[Theorem 3.4]{As}).

\begin{lem}\label{fibrationretract}
Let $R$ be a Noetherian ring and $A$ an ${\A}^{n}$-fibration over $R$. Then 
$R$ is a retract of $A$ and $A$ is an $R$-algebra retract of $R^{[m]}$ for some integer $m$.
\end{lem}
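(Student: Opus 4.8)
The plan is to reduce both assertions to the single fact that the symmetric algebra of a finitely generated projective module is a retract of a polynomial ring (Lemma \ref{symretract}), with Asanuma's structure theorem serving as the bridge between an arbitrary $\A^n$-fibration and such a symmetric algebra. First I would invoke \cite[Theorem 3.4]{As}: since $A$ is an $\A^n$-fibration over the Noetherian ring $R$, there is a finitely generated projective $R$-module $P$ of rank $n$ such that $A$ is an $R$-algebra retract of $C:={\rm Sym}_R(P)$. (If one prefers the ``stable'' form of the structure theorem, in which $A$ becomes such a symmetric algebra after adjoining finitely many variables, the retraction $A \hookrightarrow C$ is then obtained by evaluating the extra variables at $0$.) I would then fix an $R$-algebra retraction of $C$ onto $A$ and regard $R \subseteq A \subseteq C$.

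For the first assertion, I would use the augmentation map $\varepsilon: C = {\rm Sym}_R(P) \to R$ sending every element of $P$ to $0$; it is an $R$-algebra homomorphism which is the identity on $R$. Its restriction $\varepsilon|_A: A \to R$ is still an $R$-algebra homomorphism that is the identity on $R$, so the composite $A \xrightarrow{\varepsilon|_A} R \hookrightarrow A$ is an idempotent endomorphism of $A$ with image $R$. This exhibits $R$ as a retract of $A$.

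For the second assertion, I would first apply Lemma \ref{symretract} to $C = {\rm Sym}_R(P)$: as $P$ is a direct summand of some free module $R^m$, the algebra ${\rm Sym}_R(P)$ is an $R$-algebra retract of ${\rm Sym}_R(R^m) = R^{[m]}$. It then remains to observe that retracts compose. Concretely, if $\pi_1$ is an idempotent endomorphism of $C$ with image $A$ and $\pi_2$ an idempotent endomorphism of $R^{[m]}$ with image $C$, then $\pi_1 \circ \pi_2$ is an idempotent endomorphism of $R^{[m]}$ with image $A$ (using that $\pi_2$ restricts to the identity on its image $C \supseteq A$, one checks $(\pi_1\pi_2)^2 = \pi_1\pi_2$). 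Hence $A$ is an $R$-algebra retract of $R^{[m]}$, as required.

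Since both conclusions become purely formal once the structure theorem is available, I do not expect any genuine obstacle in the argument beyond correctly quoting and applying \cite[Theorem 3.4]{As}; the only point requiring care is the bookkeeping of the inclusions $R \subseteq A \subseteq C$, so that the augmentation $\varepsilon|_A$ and the composite of idempotents land in the intended subrings.
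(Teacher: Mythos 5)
Your overall strategy --- Asanuma's structure theorem, Lemma \ref{symretract}, and transitivity of retractions --- is exactly the paper's, and both your transitivity computation and the idea of using an augmentation for the first assertion are sound. The genuine problem is your quotation of \cite[Theorem 3.4]{As}. It does \emph{not} say that $A$ is an $R$-algebra retract of ${\rm Sym}_R(P)$ for a finitely generated projective $R$-module $P$ of rank $n$; it says that $A$ embeds as an $R$-subalgebra of a polynomial ring $B=R^{[\ell]}$ for some $\ell$, and that only \emph{stably}, $A^{[\ell]} \cong {\rm Sym}_{B}(M)$ for a finitely generated projective module $M$ \emph{over the polynomial ring} $B$, not over $R$. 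Your primary formulation is in fact false, not merely imprecise: over a discrete valuation ring $R$ every finitely generated projective module is free, so your version would make every $\A^n$-fibration $A$ over such $R$ a retract of ${\rm Sym}_R(P)=R^{[n]}$ with $\td_R A=n$, forcing $A=R^{[n]}$ by Theorem \ref{ct2}(iii) --- contradicting Asanuma's nontrivial $\A^2$-fibrations over a discrete valuation ring not containing $\bQ$ (\cite[Theorem 5.1]{As}, cf.\ Remark \ref{q3}(i)). Even your hedged ``stable form'' misplaces the base of the symmetric algebra: what one gets after adjoining variables is ${\rm Sym}_B(M)$, not ${\rm Sym}_R(P)$.

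Fortunately the repair is routine and turns your argument into the paper's proof. Since $A$ is a retract of $A^{[\ell]}\cong {\rm Sym}_B(M)$ (evaluating the extra variables at $0$, as you indicate), apply Lemma \ref{symretract} \emph{over the base $B$} --- the lemma is stated for an arbitrary ring --- to conclude that ${\rm Sym}_B(M)$ is a $B$-algebra retract of ${\rm Sym}_B(B^{s})=B^{[s]}=R^{[\ell+s]}$; your transitivity argument then exhibits $A$ as an $R$-algebra retract of $R^{[m]}$ with $m=\ell+s$, exactly as in the paper. For the first assertion, your augmentation must correspondingly be taken in two steps, ${\rm Sym}_B(M)\to B\to R$, restricted to $A$; the paper avoids this detour by using the other half of Asanuma's theorem directly: the inclusion $A\hookrightarrow R^{[\ell]}$ composed with an evaluation $R^{[\ell]}\to R$ already exhibits $R$ as a retract of $A$.
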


\begin{proof}
By Asanuma's Theorem \cite[Theorem 3.4]{As}, $A$ is an $R$-subalgebra of a polynomial ring $B=R^{[\ell]}$ for some $\ell$ 
such that $A^{[\ell]} \cong {\rm Sym}_{B} (M)$ for a certain projective $B$-module $M$.
Since $A \hookrightarrow R^{[\ell]}$, it follows that $R$ is a retract of $A$.  By Lemma \ref{symretract},
$A^{[\ell]}$ is a retract of a polynomial ring $B^{[s]}=R^{[\ell +s]}$ for some integer $s$. Set $m:=\ell +s$.
Thus, $A^{[\ell]}$ and hence $A$ itself is a retract of $R^{[m]}$.
\end{proof}

The following result on dimension of fibre rings is probably well-known. 
For the lack of a ready reference, we present a simple proof suggested to us by N. Onoda.

\begin{lem}\label{onoda}
Let $R$ be a Noetherian domain, $B=R^{[n]}$ and $A$ a retract of $B$. Fix a prime ideal $\p$ of $R$ and set $P:=\p A$.
Then the following statements hold:
\begin{enumerate}
\item[\rm(i)] $P$ is a prime ideal of $A$, $P \cap R=\p$ and ${\rm ht}(P)={\rm ht}(\p)$.
\item[\rm(ii)] $PB \cap A=P$.
\item[\rm(iii)] $\td_{\kappa({\p})}(A \otimes_{R} \kappa({\p}))= \td_{\kappa({\p})}\kappa(P)=\td_{R}A$.
\end{enumerate}
\end{lem}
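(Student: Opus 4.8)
The plan is to exploit the retraction $\pi:B\to A$ together with the defining embedding $A\subseteq B=R^{[n]}$, and to use the results of Lemma \ref{ct} and Lemma \ref{ff} freely. First I would record the basic setup: since $A$ is a retract of $B=R^{[n]}$, Lemma \ref{ff}(i) gives that $A$ is a finitely generated $R$-algebra, and since $B$ is a Noetherian domain, Lemma \ct{ct}(i) gives that $A$ is algebraically closed in $B$ (hence an integral domain). I would begin with part (ii), since (i) is essentially a consequence of it. To show $PB\cap A=P$ where $P=\p A$, the key observation is that $\pi$ restricts to the identity on $A$ and is an $R$-algebra endomorphism, so $\pi(\p B)=\p\pi(B)=\p A=P$. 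Hence if $z\in PB\cap A$, write $z\in \p B\subseteq PB$ and apply $\pi$: we get $z=\pi(z)\in\pi(\p B)=P$, giving the inclusion $PB\cap A\subseteq P$; the reverse inclusion is immediate. (One must check $\p B=PB$ as ideals of $B$, which holds since $P=\p A$ generates the same extended ideal in $B$ as $\p$ does.)

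Next I would deduce part (i). That $P\cap R=\p$ follows by localizing the retraction at $\p$: by Lemma \ref{ct}(ii), $A_\p$ is a retract of $B_\p=R_\p^{[n]}$, and $P A_\p=\p A_\p$ is a proper ideal because applying the localized retraction to $\p B_\p$ shows $1\notin \p B_\p\cap A_\p$; contracting to $R$ and using that $B$ is faithfully flat over $R$ (so $\p B\cap R=\p$) forces $P\cap R=\p$. To see that $P$ is prime and that $\hgt(P)=\hgt(\p)$, I would pass to the fibre: reduce modulo $\p$ using Lemma \ref{ct}(ii), which gives that $A/P=A\otimes_R\kappa(\p)$ localizes to a retract of $B\otimes_R\kappa(\p)=\kappa(\p)^{[n]}$; since a retract of a domain is a domain (being algebraically closed in it by Lemma \ref{ct}(i)), $A/P$ is a domain, so $P$ is prime. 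The height equality then follows from the faithful flatness of $B$ over $R$ and of $A$ over $R$ (Lemma \ref{ff}(ii)), which gives the going-down property between $R$ and $A$, together with the fact that the fibre over $\p$ is nonempty and equidimensional; concretely, $\hgt(P)\le\hgt(\p)$ by going-down applied along $R\hookrightarrow A$, and $\hgt(P)\ge\hgt(\p)$ since $P\cap R=\p$ and $A$ is flat over $R$.

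Finally, for part (iii) I would compute the transcendence degrees. The outer equality $\td_{\kappa(\p)}\kappa(P)=\td_R A$ is governed by the dimension formula for the faithfully flat (hence flat with the going-down property) finitely generated algebra $A$ over the Noetherian domain $R$: the generic fibre and the fibre over $\p$ have the same transcendence degree because $A$ is $R$-flat with all fibres of $R\hookrightarrow A$ of the same dimension, and this common dimension equals $\td_R A=\dim A-\dim R$. The middle equality $\td_{\kappa(\p)}(A\otimes_R\kappa(\p))=\td_{\kappa(\p)}\kappa(P)$ holds because $A\otimes_R\kappa(\p)=A/\p A=A/P$ is a domain (established in part (i)) with field of fractions $\kappa(P)$, so its Krull dimension—equivalently its transcendence degree over $\kappa(\p)$ as a finitely generated $\kappa(\p)$-algebra—agrees with $\td_{\kappa(\p)}\kappa(P)$.

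The step I expect to be the main obstacle is the height equality $\hgt(P)=\hgt(\p)$ in part (i): the inequality $\hgt(P)\le\hgt(\p)$ requires care in invoking going-down correctly across $R\hookrightarrow A$, and one must be careful that faithful flatness of $A$ over $R$ (Lemma \ref{ff}(ii)) is genuinely available here—it is, since $B=R^{[n]}$ is faithfully flat over $R$ and $A$ is a retract, so Lemma \ref{ff}(ii) applies. The transcendence-degree bookkeeping in part (iii) is then routine once the fibre $A/P$ is known to be a domain.
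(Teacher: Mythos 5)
Your parts (i) and (ii) are essentially sound and close in spirit to the paper's argument: the paper disposes of (ii) via the direct-summand decomposition $B=A\oplus \ker\pi$ (your application of $\pi$ to $PB=\p B$ is the same computation), and it proves primality of $P$ via the injections $A/\p A\hookrightarrow A\otimes_R\kappa(\p)\hookrightarrow B\otimes_R\kappa(\p)=\kappa(\p)^{[n]}$, which is what your fibre argument amounts to (with the small correction that $A\otimes_R\kappa(\p)$ is a localization of $A/P$, not equal to it). One local slip in (i): your two height inequalities carry interchanged justifications. Going-down (a consequence of flatness of $A$ over $R$), applied at the prime $P$ lying over $\p$, gives the lower bound $\hgt(P)\ge \hgt(\p)$ --- so both of your clauses in fact argue only this direction. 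The upper bound $\hgt(P)\le\hgt(\p)$ does not come from going-down nor from any ``equidimensionality of fibres''; it comes from the dimension formula for the flat local homomorphism $R_\p\to A_P$, whose closed fibre $A_P/\p A_P=\kappa(P)$ is a field and hence zero-dimensional, so that $\dim A_P=\dim R_\p$. This is exactly the Matsumura result the paper cites, and it is repairable in your write-up.

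The genuine gap is in (iii). You derive the outer equality $\td_{\kappa(\p)}\kappa(P)=\td_R A$ from the claims that flatness forces all fibres of $R\hookrightarrow A$ to have the same dimension and that this dimension equals $\dim A-\dim R=\td_R A$. Neither claim is available: constancy of fibre dimension is not a consequence of flatness and finite generation over a general Noetherian domain, and the dimension \emph{formula} $\hgt(P)+\td_{\kappa(\p)}\kappa(P)=\hgt(\p)+\td_R A$ requires $R$ to be universally catenary, which is not assumed. Over an arbitrary Noetherian domain one only has the dimension \emph{inequality}, which (given $\hgt(P)=\hgt(\p)$) yields $\td_{\kappa(\p)}\kappa(P)\le\td_R A$ --- one direction only; indeed proving the other direction is the whole content of (iii), so your appeal to constancy of fibre dimension is circular. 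The paper obtains the reverse inequality precisely by exploiting the ambient polynomial ring, which your (iii) never uses: since $PB=\p B$ and $\hgt(PB)=\hgt(\p)=\hgt(P)$, the dimension inequality applied to the extension $A\subseteq B$ of Noetherian domains gives $\td_{\kappa(P)}\kappa(PB)\le\td_A B$, and combining this with the two additivity relations $\td_{\kappa(\p)}\kappa(P)+\td_{\kappa(P)}\kappa(PB)=\td_{\kappa(\p)}\kappa(\p B)=n$ and $\td_R A+\td_A B=\td_R B=n$ forces $\td_{\kappa(\p)}\kappa(P)\ge\td_R A$. Without this sandwich through $B$ (or a catenarity hypothesis you do not have), the outer equality in (iii) is unproven. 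Your middle equality $\td_{\kappa(\p)}(A\otimes_R\kappa(\p))=\td_{\kappa(\p)}\kappa(P)$, via $A\otimes_R\kappa(\p)$ being a localization of the domain $A/P$ with fraction field $\kappa(P)$, is fine.
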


\begin{proof} (Onoda) (i) By Lemma \ref{ff}, $A$ is faithfully flat over $R$ and hence $A/{\p}A \hookrightarrow A \otimes_R \kappa({\p})$,
as $R/{\p} \hookrightarrow  \kappa({\p})$.
Since $A$ is a retract and hence a direct summand of $B$, $A \otimes_R \kappa({\p}) \hookrightarrow B \otimes_R \kappa({\p})
= {\kappa({\p})}^{[n]}$. Thus $A/P \hookrightarrow {\kappa({\p})}^{[n]}$; in particular, $P$ is a prime ideal of $A$.

The result $P \cap R= {\p}A \cap R=\p$ follows from faithful flatness of $A$ over $R$.

Since $A$ is faithfully flat over $R$, the induced map ${\rm Spec} (A) \rightarrow {\rm Spec} (R)$ is surjective (\cite[p. 28]{MA}) 
and the going-down theorem holds between $R$ and $A$ (\cite[p. 33]{MA}). Hence ${\rm ht}(P)={\rm ht}(\p A)={\rm ht}(\p)$ (\cite[p. 79]{MA}).

\smallskip

(ii) Follows from the fact that $A$ is a direct summand of $B$. 

\smallskip

(iii) Since ${\rm ht}(P)={\rm ht}(\p)$, applying the dimension inequality between $R$ and $A$ (\cite[p. 85]{MA}), we have 
\begin{equation}\label{inequality1}
\td_{\kappa({\p})}\kappa(P) \le \td_{R}A.
\end{equation}
 Note that, being a retract of the Noetherian ring $B$,
$A$ is Noetherian. Therefore, as $${\rm ht}(PB)={\rm ht}(\p B) = {\rm ht}(\p)= {\rm ht}(P),$$ applying the
dimension inequality between $A$ and $B$, we get 
\begin{equation}\label{trdeg1}
\td_{\kappa({P})}\kappa(PB) \le \td_{A}B.
\end{equation}
Now 
\begin{equation}\label{equality1}
\td_{\kappa({\p})}\kappa(P) + \td_{\kappa({P})}\kappa(PB)=\td_{\kappa({\p})}\kappa(\p B)=n
\end{equation}
and 
\begin{equation}\label{equality2}
 \td_{R}A+\td_{A}B= \td_{R}B=n.
\end{equation}
Hence, by (\ref{trdeg1}), (\ref{equality1}) and (\ref{equality2}),
$$n- \td_{\kappa({\p})}\kappa(P) \le n- \td_{R}A,$$ i.e., 
\begin{equation}\label{inequality2}
\td_{\kappa({\p})}\kappa(P) \ge \td_{R}A.
\end{equation}
 Therefore, by (\ref{inequality1}) and (\ref{inequality2}), 
$$\td_{\kappa({\p})}\kappa(P) = \td_{R}A.$$ Hence the result.

\end{proof}

We now discuss Question 1 over a general Noetherian domain $R$. Note that, by Lemma \ref{ct}(i),   
retracts of $R[X_1, X_2]$ of transcendence degree zero or two are $R$ and $R[X_1, X_2]$ respectively.
The following result shows that retracts of $R[X_1, X_2]$ of transcendence degree one are ${\A}^{1}$-fibrations;
in fact, it characterizes retracts of polynomial $R$-algebras of transcendence degree one.

\begin{thm}\label{ctd1}
Let $R$ be a Noetherian domain and $A$ an integral domain containing $R$ with $\td_{R}A=1$. Then the following statements are equivalent.
\begin{enumerate}
\item[{\rm (I)}] $A$ is an $R$-algebra retract of $R^{[m]}$ for some integer $m$.
\item[{\rm (II)}] $A$ is an ${\A}^{1}$-fibration over $R$.
\end{enumerate}
\end{thm}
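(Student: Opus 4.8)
The plan is to prove the two implications separately, using the machinery already assembled in the excerpt. The direction $({\rm II}) \Rightarrow ({\rm I})$ should be the easy one: if $A$ is an $\A^1$-fibration over $R$, then Lemma \ref{fibrationretract} (which is derived from Asanuma's structure theorem) immediately gives that $A$ is an $R$-algebra retract of $R^{[m]}$ for some integer $m$. So the real content lies in $({\rm I}) \Rightarrow ({\rm II})$.

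For $({\rm I}) \Rightarrow ({\rm II})$, recall that an $\A^1$-fibration over $R$ is a finitely generated flat $R$-algebra $A$ such that $A \otimes_R \kappa(\p) = \kappa(\p)^{[1]}$ for every prime $\p$ of $R$. First I would dispose of the structural hypotheses: by Lemma \ref{ff}(i), $A$ is a finitely generated $R$-algebra, and by Lemma \ref{ff}(ii), $A$ is faithfully flat over $R$ (since $B = R^{[m]}$ is faithfully flat over $R$ and $A$ is a retract). Thus the flatness and finite generation required of an $\A^1$-fibration come for free. It remains to check the fibre condition, namely that for each prime $\p$ of $R$ the fibre ring $A \otimes_R \kappa(\p)$ is a polynomial ring in one variable over $\kappa(\p)$.

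To verify the fibre condition, fix a prime $\p$ of $R$ and write $k' := \kappa(\p)$ and $\bar A := A \otimes_R k'$. By Lemma \ref{ct}(ii), applied after base change along $R \to k'$, the ring $\bar A$ is a $k'$-algebra retract of $B \otimes_R k' = k'^{[m]}$. Moreover, Lemma \ref{onoda}(iii) gives $\td_{k'} \bar A = \td_R A = 1$, so $\bar A$ is a retract of a polynomial ring over the field $k'$ of transcendence degree one over $k'$. The natural tool here is Theorem \ref{ct2}(ii): over a UFD (in particular a field) $R_0$, a retract of $R_0^{[n]}$ of transcendence degree one is $R_0^{[1]}$. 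Applying this with $R_0 = k'$ yields $\bar A = k'^{[1]}$, which is precisely the fibre condition. Hence $A$ is an $\A^1$-fibration over $R$.

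The main obstacle I anticipate is ensuring that the base-changed retract $\bar A$ is genuinely an integral domain (or at least that Theorem \ref{ct2} applies to it as stated), and that its transcendence degree over $k'$ is correctly identified as $1$ rather than possibly dropping. The integral-domain issue is exactly what Lemma \ref{onoda}(i) and (iii) are designed to control: they show $\p A$ is prime with $A/\p A \hookrightarrow k'^{[n]}$ and that the transcendence degree is preserved on the fibre, so no collapse occurs. One subtlety to watch is that Theorem \ref{ct2}(ii) is stated for $R_0$ a UFD and a field is a UFD, so the application is legitimate; the care needed is purely in verifying that the retraction $\pi$ descends to an idempotent $k'$-endomorphism of $k'^{[m]}$ with image $\bar A$, which is exactly the content of Lemma \ref{ct}(ii). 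Once these bookkeeping points are settled, the argument closes.
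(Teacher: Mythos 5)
Your proposal is correct and follows essentially the same route as the paper: Lemma \ref{ff} for finite generation and faithful flatness, Lemma \ref{ct}(ii) to pass the retraction to each fibre $A\otimes_R\kappa(\p)$, Lemma \ref{onoda} to see the fibre is a domain of transcendence degree one over $\kappa(\p)$, Theorem \ref{ct2}(ii) to conclude the fibre is $\kappa(\p)^{[1]}$, and Lemma \ref{fibrationretract} for the converse. The extra care you take over the domain property and non-collapse of transcendence degree of the fibre is exactly what Lemma \ref{onoda}(i) and (iii) supply, as in the paper.
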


\begin{proof}
\noindent 
(I) $\Rightarrow$ (II).
By Lemma \ref{ff}, $A$ is finitely generated and faithfully flat over $R$. 
 Now by Lemma \ref{ct}(ii), for each prime ideal ${\p}$ of $R$, $A \otimes_{R} \kappa({\p})$ is a
 retract of $B \otimes_{R} \kappa({\p})(={\kappa({\p})}^{[n]})$ and
by Lemma \ref{onoda}, $\td_{\kappa({\p})} (A \otimes_{R} \kappa({\p}))=1$.
 Therefore, by Theorem \ref{ct2}, 
 $A \otimes_{R} \kappa({\p})={\kappa({\p})}^{[1]}$. Hence $A$ is an ${\A}^{1}$-fibration over $R$.

\smallskip

\noindent
(II) $\Rightarrow$ (I) is a special case of Lemma \ref{fibrationretract}.
\end{proof}

\begin{rem}\label{ctd1remark1} 
{\em Since there exist non-trivial ${\A}^{1}$-fibrations over Noetherian local domains like $R={\bC}[[t^2, t^3]]$ 
(for instance, \cite[Example 2.5]{BD}), 
Theorem \ref{ctd1} is perhaps the best conclusion that one can
obtain over a general Noetherian domain $R$.    
}
\end{rem}

Recall that any ${\A}^{1}$-fibration over a Noetherian seminormal domain $R$ is  
isomorphic to a symmetric algebra of an invertible ideal of $R$ (cf. \cite[Theorem 3.10]{BD}), 
Hence, as a consequence of Proposition \ref{ctd1}, we have the following result on the precise structure
of retracts of $R^{[n]}$ of transcendence degree one, which had been shown earlier by C. Greither (\cite[Theorem 2.3]{Gr}).  

\begin{cor}\label{ctd1cor2} Let $R$ be a Noetherian seminormal domain, 
$B=R^{[n]}$ and $A$ a retract of $B$ such that $\td_{R}A=1$.
Then $A \cong {\rm Sym}_{R}(J)$ for some invertible ideal $J$ of $R$.
\end{cor}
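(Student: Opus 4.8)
The plan is to deduce this corollary directly from Theorem~\ref{ctd1} together with the cited structure theorem for $\A^1$-fibrations over seminormal domains. First I would observe that a retract $A$ of $B=R^{[n]}$ with $\td_R A=1$ satisfies the hypotheses of Theorem~\ref{ctd1}: indeed $A$ is an integral domain containing $R$ (being a subring of the domain $B$), and $A$ is an $R$-algebra retract of $R^{[m]}$ with $m=n$. Hence by the implication $(\mathrm{I})\Rightarrow(\mathrm{II})$ of Theorem~\ref{ctd1}, $A$ is an $\A^1$-fibration over $R$.

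Next I would invoke the quoted result of Bhatwadekar--Dutta, namely \cite[Theorem 3.10]{BD}, which asserts that any $\A^1$-fibration over a Noetherian seminormal domain $R$ is isomorphic to the symmetric algebra ${\rm Sym}_R(J)$ of an invertible ideal $J$ of $R$. Since $R$ is assumed to be a Noetherian seminormal domain, this applies verbatim to our $A$, yielding $A \cong {\rm Sym}_R(J)$ for some invertible ideal $J$ of $R$, which is exactly the desired conclusion.

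In short, the corollary is a two-line composition: Theorem~\ref{ctd1} upgrades ``retract of a polynomial ring of transcendence degree one'' to ``$\A^1$-fibration,'' and the seminormality hypothesis then upgrades ``$\A^1$-fibration'' to ``symmetric algebra of an invertible ideal'' via the external structure theorem. There is essentially no genuine obstacle here, since both inputs are already in hand; the only point requiring a moment's care is confirming that the transcendence-degree and containment hypotheses of Theorem~\ref{ctd1} are met so that it may be applied, and that the external theorem \cite[Theorem 3.10]{BD} is stated for precisely the class of rings (Noetherian seminormal domains) assumed in the corollary. Both checks are immediate.

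I note that the corollary statement refers to ``Proposition~\ref{ctd1}'' whereas the result is labelled a Theorem; in writing the proof I would simply cite Theorem~\ref{ctd1} to avoid the mislabelling. No further machinery from Sections~2--4 (such as the patching Lemma~\ref{patch} or the symmetric-algebra criteria) is needed, since all the heavy lifting has already been done in proving Theorem~\ref{ctd1} itself.
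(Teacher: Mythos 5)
Your proof is correct and is exactly the paper's own argument: the paper deduces the corollary by applying Theorem~\ref{ctd1} (implication (I)\,$\Rightarrow$\,(II)) to conclude that $A$ is an $\A^1$-fibration over $R$, and then invoking the Bhatwadekar--Dutta structure theorem \cite[Theorem 3.10]{BD} for $\A^1$-fibrations over Noetherian seminormal domains. Your side remark about the ``Proposition~\ref{ctd1}'' citation is also apt, as the paper itself commits that mislabelling in the sentence preceding the corollary.
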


\begin{rem}\label{ctd1remark2}
 {\em  Let $R$ be a Dedekind domain which is not a PID and let $J$ be an invertible ideal which is not principal. Since 
$J$ is generated by two elements, $J$ is an $R$-module retract of $R^2$ and hence $A = {\rm Sym}_{R}(J)$ is an $R$-algebra 
retract of $B={\rm Sym}_{R}(R^2)=R^{[2]}$. But $A \ne R^{[1]}$ by Lemma \ref{eh}. (Example \ref{rlndex1} is an explicit illustration.) 
Thus, Corollary \ref{ctd1cor2} is the best possible answer to Question 1 even for a Dedekind domain.
}
\end{rem}

The following theorem answers Question 2 affirmatively in the case when  $R$ is a field  of characteristic zero.
This result has been independently observed by T. Nagamine in \cite{T}.

\begin{thm}\label{ftd2}
 Let $k$ be a field of characteristic zero, $B=k^{[n]}$ and $A$ is a retract of $B$ with $\td_kA=2$. Then 
 $A=k^{[2]}$. In particular, any retract of $k^{[3]}$ is isomorphic to a polynomial ring over $k$.
\end{thm}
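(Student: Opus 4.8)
The plan is to deduce the result from the Fujita--Miyanishi-Sugie--Russell characterization (Theorem \ref{r}). That theorem takes as input a finitely generated regular $k$-subalgebra of $k^{[2]}$ of dimension $2$ whose base change to $\bar{k}$ is a UFD and which sits inside $k^{[2]}$ via a separable function-field extension, and concludes that the subalgebra equals $k^{[2]}$. So the whole proof reduces to placing $A$ inside a copy of $k^{[2]}$ and checking these hypotheses one at a time.

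First I would record the structural properties of $A$ that come for free from its being a retract. By Lemma \ref{ff}(i), $A$ is a finitely generated $k$-algebra; being a subring of the domain $B=k^{[n]}$, it is a domain; and by Lemma \ref{ct}(v), $A$ is regular. Since $A$ is a finitely generated domain over the field $k$, its Krull dimension equals its transcendence degree, so $\dim A=\td_k A=2$. Because $A\subseteq B=k^{[n]}$ with $\dim A=2$, Eakin's cutting-down lemma (Lemma \ref{e}) embeds $A$ into a polynomial ring $B':=k^{[2]}$, exhibiting $A$ as a finitely generated regular $k$-subalgebra of $k^{[2]}$ of dimension $2$. The separability hypothesis of Theorem \ref{r} is automatic here: $\td_k A=\td_k B'=2$ forces $Q(B')/Q(A)$ to be algebraic, and in characteristic zero every algebraic extension is separable (and $k$ is perfect since $\operatorname{char}k=0$).

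The one genuinely non-formal hypothesis is that $\bar{k}\otimes_k A$ is a UFD, and this is where the retract structure is used. Base changing the retraction $\pi:B\to B$ along the flat extension $k\hookrightarrow\bar{k}$ yields an idempotent $\bar{k}$-algebra endomorphism of $B\otimes_k\bar{k}=\bar{k}^{[n]}$ whose image is $A\otimes_k\bar{k}$; thus $A\otimes_k\bar{k}$ is a retract of the UFD $\bar{k}^{[n]}$. Since $\bar{k}^{[n]}$ is an integral domain, Lemma \ref{ct}(iv) applies and gives that $\bar{k}\otimes_k A$ is a UFD. This property is intrinsic to $A$ and so is unaffected by the re-embedding into $k^{[2]}$ produced by Eakin's lemma.

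With all hypotheses verified, Theorem \ref{r} yields $A=k^{[2]}$, proving the main assertion. For the final ``in particular'', let $A$ be any retract of $k^{[3]}$; then $\td_k A\in\{0,1,2,3\}$. Applying Theorem \ref{ct2} over the UFD $R=k$ disposes of the cases $\td_k A=0,1,3$ (yielding $k$, $k^{[1]}$, and $k^{[3]}$ respectively), while the case $\td_k A=2$ is exactly the statement just established. Hence every retract of $k^{[3]}$ is a polynomial ring over $k$.
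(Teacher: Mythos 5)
Your proof is correct and follows essentially the same route as the paper's: Eakin's lemma (Lemma \ref{e}) to embed $A$ in $k^{[2]}$, Lemma \ref{ct}(v) for regularity, Lemma \ref{ct}(iv) applied to the base-changed retract $\bar{k}\otimes_k A\subseteq\bar{k}^{[n]}$ for the UFD hypothesis, automatic separability in characteristic zero, and then Theorem \ref{r}. Your explicit remark that the UFD and regularity properties are intrinsic to $A$ and hence survive the re-embedding, and your spelled-out treatment of the ``in particular'' via Theorem \ref{ct2}, are points the paper leaves implicit, but the argument is the same.
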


\begin{proof}
 Being a quotient of $k^{[n]}$, $A$ is an affine $k$-domain. Using Lemma \ref{e}, we may assume that $A$ can be 
 embedded in a polynomial ring $C=k^{[2]}$ as a $k$-algebra. 
By Lemma \ref{ct}(v), $A$ is regular. Let $\bar{k}$ be an algebraic closure of $k$. 
 Then $\bar k\otimes_{k} A$, being a retract 
 of $\bar k^{[n]}$, is a UFD by Lemma \ref{ct}(iv). Further, $Q(C)|_{Q(A)}$ is a 
 separable algebraic extension as $k$ has characteristic zero and $\td_{k}A=2$. Now it follows from Theorem \ref{r} 
 that $A=k^{[2]}$. 
\end{proof}

The following result addresses Question 2 when ${\rm dim~} R \ge 1$.  

\begin{thm}\label{td2n}
Let $R$ be a Noetherian domain containing $\bQ$ and $A$ an integral domain containing $R$
for which $\td_{R}A=2$. Then the following statements are equivalent.
\begin{enumerate}
\item[{\rm (I)}] $A$ is an $R$-algebra retract of $R^{[m]}$ for some integer $m$.
\item[{\rm (II)}] $A$ is an ${\A}^{2}$-fibration over $R$.
\end{enumerate}
\end{thm}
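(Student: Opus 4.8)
The plan is to mirror the structure of the proof of Theorem \ref{ctd1} for the transcendence degree one case, with Theorem \ref{ftd2} now playing the role that Theorem \ref{ct2}(ii) played there. The implication (II) $\Rightarrow$ (I) should be immediate: an $\A^2$-fibration over the Noetherian ring $R$ is, by Lemma \ref{fibrationretract}, an $R$-algebra retract of $R^{[m]}$ for some integer $m$. So essentially all the work lies in the forward direction.

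For (I) $\Rightarrow$ (II), first I would invoke Lemma \ref{ff} to conclude that $A$ is a finitely generated and faithfully flat $R$-algebra, where $A$ is realized as a retract of some $B := R^{[n]}$. The remaining task is to control the fibers $A \otimes_R \kappa(\p)$. For each prime ideal $\p$ of $R$, Lemma \ref{ct}(ii) shows that $A \otimes_R \kappa(\p)$ is itself a retract of $B \otimes_R \kappa(\p) = \kappa(\p)^{[n]}$, and Lemma \ref{onoda}(iii) gives $\td_{\kappa(\p)}(A \otimes_R \kappa(\p)) = \td_R A = 2$.

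The decisive step is then to identify each fiber. Since $R$ contains $\bQ$, every residue field $\kappa(\p)$ has characteristic zero, so $A \otimes_R \kappa(\p)$ is a retract of a polynomial ring over a field of characteristic zero, of transcendence degree $2$ over that field. Hence Theorem \ref{ftd2} applies verbatim to the fiber and yields $A \otimes_R \kappa(\p) = \kappa(\p)^{[2]}$. Combined with the finite generation and faithful flatness of $A$ over $R$ established above, this is exactly the assertion that $A$ is an $\A^2$-fibration over $R$, completing (I) $\Rightarrow$ (II).

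The main obstacle is precisely this fiber identification, and the substantive input is Theorem \ref{ftd2}, which itself rests on the Miyanishi--Sugie--Fujita characterization of polynomial subrings of $k^{[2]}$ (Theorem \ref{r}). This is exactly where the hypothesis $\bQ \subseteq R$ is indispensable: it is what forces every residue field to have characteristic zero, and Theorem \ref{ftd2} can fail in positive characteristic (as the counterexamples to the Zariski Cancellation Problem indicate), so the characteristic zero assumption on $R$ cannot simply be dropped. Everything else in the argument — flatness, the retract structure passing to fibers, and the transcendence degree count — is formal and parallels the $\td_R A = 1$ case already handled in Theorem \ref{ctd1}.
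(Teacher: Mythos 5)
Your proposal is correct and follows essentially the same route as the paper's own proof: Lemma \ref{ff} for finite generation and faithful flatness, Lemma \ref{ct}(ii) and Lemma \ref{onoda} to show each fiber $A \otimes_R \kappa(\p)$ is a retract of $\kappa(\p)^{[n]}$ of transcendence degree $2$, and Theorem \ref{ftd2} (applicable since $\bQ \subseteq R$ forces ${\rm char}\,\kappa(\p)=0$) to identify the fibers, with Lemma \ref{fibrationretract} handling the converse. Your remarks on where the characteristic zero hypothesis enters match the paper's discussion as well.
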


\begin{proof} 
\noindent 
(I) $\Rightarrow$ (II).
Since $A$ is a retract of $B$, by Lemma \ref{ff}, $A$ is finitely generated and faithfully flat over $R$. 
Now by Lemma \ref{ct}(ii), for each prime ideal ${\p}$ of $R$, $A \otimes_{R} \kappa({\p})$ is a retract of 
$B \otimes_{R} \kappa({\p})(={\kappa({\p})}^{[n]})$; and hence 
by Lemma \ref{onoda}, $\td_{\kappa({\p})} (A \otimes_{R} \kappa({\p}))=2$.
Therefore, by 
Theorem \ref{ftd2}, $A \otimes_{R} \kappa({\p})={\kappa ({\p})}^{[2]}$. Hence $A$ is an ${\A}^2$-fibration over $R$.

\smallskip

\noindent
(II) $\Rightarrow$ (I) is a special case of Lemma \ref{fibrationretract}. 
\end{proof}

As a consequence, we have the following response to Question 2 for a Dedekind domain $R$.
  
\begin{cor}\label{td2ncor1}
Let $R$ be a Dedekind domain containing $\bQ$, $B:=R[X_{1}, \dots ,X_{n}](=R^{[n]})$ and $A$ a retract of $B$ such that 
$\td_{R}A=2$. Then $A\cong {\rm Sym}_{R}(M)$ for some finitely generated projective $R$-module $M$
of rank two.
\end{cor}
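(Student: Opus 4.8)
The plan is to combine the just-proved Theorem \ref{td2n} with Sathaye's local triviality theorem (Theorem \ref{sth}) and then globalize via the Bass--Connell--Wright/Suslin theorem (Theorem \ref{bcw}).

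First I would invoke Theorem \ref{td2n}. Since $R$ is a Noetherian domain containing $\bQ$ and $A$ is a retract of the polynomial ring $B=R^{[n]}$ (the special case $m=n$ of that theorem) with $\td_R A=2$, the implication (I) $\Rightarrow$ (II) shows that $A$ is an $\A^2$-fibration over $R$. In particular, by Lemma \ref{ff}(i), $A$ is a finitely generated $R$-algebra; as $R$ is Noetherian, $A$ is finitely presented over $R$.

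Next I would pass to the local rings. Fix a maximal ideal $\m$ of $R$. Since $R$ is a Dedekind domain, $R_\m$ is a discrete valuation ring, and it contains $\bQ$. The localization $A_\m=A\otimes_R R_\m$ remains finitely generated and flat over $R_\m$, and for every prime $\p\subseteq\m$ of $R$ the fibre is unchanged, $A_\m\otimes_{R_\m}\kappa(\p)=A\otimes_R\kappa(\p)=\kappa(\p)^{[2]}$; hence $A_\m$ is an $\A^2$-fibration over $R_\m$. Now Sathaye's Theorem \ref{sth} applies and yields $A_\m={R_\m}^{[2]}$. Finally, having established $A_\m={R_\m}^{[2]}$ for every maximal ideal $\m$, I would apply Theorem \ref{bcw} with $n=2$ to the finitely presented $R$-algebra $A$, concluding that $A\cong{\rm Sym}_R(M)$ for a finitely generated projective $R$-module $M$ of rank two, as required.

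The substantive input is Sathaye's theorem guaranteeing triviality of $\A^2$-fibrations over a discrete valuation ring containing $\bQ$; this is where the hypothesis $\bQ\subseteq R$ is genuinely used, and where positive-characteristic phenomena would obstruct the argument. The remaining steps---reducing to an $\A^2$-fibration, localizing while preserving the fibration property, and globalizing by descent---are formal once Theorems \ref{td2n}, \ref{sth}, and \ref{bcw} are in hand, so I do not expect a serious obstacle beyond correctly marshalling these three inputs.
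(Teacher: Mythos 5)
Your proof is correct and follows essentially the same route as the paper: both arguments chain Theorem \ref{td2n} (retract $\Rightarrow$ $\A^2$-fibration), Sathaye's Theorem \ref{sth} at each localization $R_\m$, and then Theorem \ref{bcw} to globalize to ${\rm Sym}_R(M)$ with $M$ projective of rank two. The only (immaterial) difference is that you apply Theorem \ref{td2n} once globally and then localize the fibration, whereas the paper localizes first and invokes Theorem \ref{td2n} over each $R_\m$.
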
 

\begin{proof}
For each maximal ideal $\m$ of $R$, $R_{\m}$ is a discrete valuation ring and therefore, by Theorem \ref{td2n},  $A_{\m}$ is
an ${\A}^{2}$-fibration over $R_{\m}$.  Hence, 
by Theorem \ref{sth}, $A_{\m}={R_{\m}}^{[2]}$. The result now follows from  Theorem \ref{bcw}.
\end{proof}

\begin{rem}\label{q3}
{\em  
(i) In \cite[Theorem 5.1]{As}, Asanuma showed that even over a  discrete valuation ring not containing $\bQ$,
there are $\A^2$-fibrations which are not polynomial rings. Therefore, Theorem \ref{td2ncor1} seems to be the best possible result
in general.  When $R$ is a regular local ring and $A$ is an $\A^2$-fibration over $R$, 
then Asanuma has shown  that $A^{[m]} =R^{[m+2]}$ for some integer $m$ (\cite[Corollary 3.5]{As}).
For a more general statement of Asanuma on the structure of affine fibrations over Noetherian rings, see \cite[Theorem 3.4]{As}.

(ii) Let $R$ be an integral domain having a non-free projective module $M$ of rank $n$ (for instance, if $R$ has a non-trivial Picard group)  
and $A={\rm Sym}_R(M)$. Then $\td_RA =n$,  $A$ is a retract of a polynomial algebra $B$ over $R$ (by Lemma \ref{symretract})
but $A$ itself is not  a polynomial algebra over $R$ (by Lemma \ref{eh}). 
Thus, a result like Corollary \ref{td2ncor1} seems to be the best possible even over a Dedekind domain.
}
\end{rem}

We now discuss Question 3. The following remark shows that over a field of positive characteristic, 
Question 3 does not have an affirmative answer in general.

\begin{rem}\label{neena}
{\em Let $k$ be any field of positive characteristic. Using Asanuma's example of a non-trivial $\A^2$-fibration over $k^{[1]}$, 
 the fourth author has proved that there exist stably polynomial rings over $k$ (and hence retracts 
of polynomial rings over $k$) which are not themselves polynomial rings over $k$ (\cite{G}).  
In fact, using the examples of \cite{G2}, it can be shown that for any 
$n\geqslant 4$, there exist retracts $A$ of $k^{[n]}$, satisfying $3 \leqslant \td_k A\leqslant n-1$ which are not polynomial 
rings.}  
\end{rem}

In the context of Question 3, the next result gives a sufficient condition for a retract of $R[X_1, \dots ,X_n]$
to be a polynomial ring. 
Recall that for a graded ring $B=\bigoplus_{i \ge 0}B_{i}$, the ideal of $B$ generated by the 
homogeneous elements of positive degree is denoted by $B_{+}$.

\begin{thm}\label{gradedr}
Let $R$ be an integral domain and $A$ a graded $R$-subalgebra of the polynomial ring 
$B:=R[X_1, \dots ,X_n]$ with standard grading. Suppose that there exists a retraction $\pi : B\rar A$ such that $\pi(B_+) \subseteq B_+$. 
Then $A = {\rm Sym}_R(M)$ for some finitely generated projective 
$R$-submodule $M$ of the free $R$-module $B_1=RX_1\oplus RX_2\oplus \cdots\oplus RX_n$.
\end{thm}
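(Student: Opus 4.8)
\textbf{Proof proposal for Theorem \ref{gradedr}.}

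The plan is to exploit the grading to pin down $A$ inside $B$ degree by degree, and in particular to show that $A$ is generated as an $R$-algebra by its degree-one piece, which will be a projective $R$-module. First I would write $A = \bigoplus_{i \ge 0} A_i$, where $A_i = A \cap B_i$, and observe that since $\pi$ is an $R$-algebra retraction with $\pi(B_+) \subseteq B_+$, the map $\pi$ must respect the grading enough to force $A_0 = R$: indeed $A_0 \subseteq B_0 = R$, and since $R \subseteq A$ we get $A_0 = R$. The key structural input is the hypothesis $\pi(B_+) \subseteq B_+$, which I would use to produce a \emph{graded} retraction. The natural approach is to replace $\pi$ by its degree-one component: for each homogeneous $f \in B_i$, decompose $\pi(f) = \sum_j \pi(f)_j$ into its homogeneous parts and define $\bar\pi(f)$ to be the degree-$i$ component $\pi(f)_i$. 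The condition $\pi(B_+)\subseteq B_+$ guarantees that $\bar\pi$ kills nothing it shouldn't, and one checks $\bar\pi$ is again an idempotent $R$-algebra endomorphism whose image is a graded subalgebra contained in $A$; the main point to verify is multiplicativity of $\bar\pi$, which follows because the top-degree component of a product is the product of top-degree components.

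Granting a graded retraction, I would next focus on $M := A_1 = A \cap B_1$, a submodule of the free module $B_1 = RX_1 \oplus \cdots \oplus RX_n$. The retraction $\bar\pi$ restricted to $B_1$ is an idempotent $R$-linear endomorphism of $B_1$ with image $M$, so $M$ is an $R$-module retract of the free module $B_1$, hence a finitely generated \emph{projective} $R$-module. This gives the module-theoretic half of the conclusion essentially for free once the graded retraction is in place.

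The heart of the argument is then to show $A = R[M] = {\rm Sym}_R(M)$, i.e.\ that $A$ is generated in degree one and that there are no relations. For generation in degree one: there is a natural surjection ${\rm Sym}_R(M) \to R[M] \subseteq A$, and I would argue that $A_i = M^i$ (the $R$-span of degree-$i$ products of elements of $M$) by a degree induction. The surjectivity of ${\rm Sym}_R(M)\to A$ should follow from applying the retraction $\bar\pi$ to the standard generators: any $f\in A_i\subseteq B_i$ is a polynomial in $X_1,\dots,X_n$, and applying $\bar\pi$ expresses $f = \bar\pi(f)$ in terms of $\bar\pi(X_j)\in M$, showing $A_i\subseteq M^i$. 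For injectivity (freeness of the symmetric algebra), I expect to invoke that $A \subseteq B$ is a subring of a polynomial ring together with Lemma \ref{eh}, or alternatively a direct argument: since $M$ is projective, ${\rm Sym}_R(M)$ is a well-understood graded $R$-algebra, and the composite ${\rm Sym}_R(M)\to A \hookrightarrow B$ is graded and injective in degree one, which combined with the domain hypothesis on $R$ forces injectivity throughout.

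The main obstacle I anticipate is establishing that the passage to the top-degree component $\bar\pi$ is genuinely a well-defined idempotent \emph{ring} homomorphism --- the multiplicativity $\bar\pi(fg) = \bar\pi(f)\bar\pi(g)$ requires that no cancellation among lower-degree terms interferes with the top degree, which is exactly where the hypothesis $\pi(B_+)\subseteq B_+$ (ruling out $\pi$ sending positive-degree elements into $R = B_0$) is indispensable. Once $\bar\pi$ is secured as a graded retraction, the rest is bookkeeping with the grading and an appeal to projectivity of retracts of free modules.
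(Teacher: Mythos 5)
Your overall route is viable and, in its second half, genuinely different from the paper's, but both crucial steps are justified incorrectly as written and need repair.

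First, the multiplicativity of $\bar\pi$. Your stated reason --- ``the top-degree component of a product is the product of top-degree components'' --- is not the relevant fact: for homogeneous $f \in B_i$ you are extracting the degree-$i$ part of $\pi(f)$, which is its \emph{lowest} candidate component, not its top one, and a priori $\pi(f)$ could have components in degrees between $1$ and $i-1$, whose cross terms would contaminate the degree-$(i+j)$ part of $\pi(f)\pi(g)$. What saves the construction is an observation you never make: since the grading is standard, $B_i \subseteq {B_+}^i$, so $\pi(B_i) \subseteq \pi(B_+)^i \subseteq {B_+}^i = \bigoplus_{j \ge i} B_j$; that is, $\pi$ never lowers degree. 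Granting this, writing $\pi(f)_a$ for the degree-$a$ component, every cross term $\pi(f)_a\pi(g)_b$ with $a+b=i+j$ and $(a,b)\neq(i,j)$ has $a<i$ or $b<j$ and hence vanishes, giving $\bar\pi(fg)=\bar\pi(f)\bar\pi(g)$ (no domain hypothesis is even needed). You should also say explicitly that $\bar\pi(B)\subseteq A$ because $A$ is a \emph{graded} subring (homogeneous components of $\pi(f)\in A$ again lie in $A$) and that $\bar\pi$ fixes $A$ elementwise; idempotency is then automatic rather than something to verify separately. Second, the injectivity of ${\rm Sym}_R(M)\to A$: ``graded and injective in degree one, plus $R$ a domain, forces injectivity throughout'' is not a valid inference (for non-projective $M$ the symmetric algebra can have torsion), and Lemma \ref{eh} is not applicable --- it compares symmetric algebras of two modules and says nothing about injectivity of this map. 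The correct fix is already in your hands: since $\bar\pi|_{B_1}$ is idempotent, $B_1=M\oplus N$ with $N=\ker(\bar\pi|_{B_1})$, so ${\rm Sym}_R(B_1)\cong {\rm Sym}_R(M)\otimes_R{\rm Sym}_R(N)$ and the natural map ${\rm Sym}_R(M)\to B$ is split injective with image $R[M]$; combined with your generation argument $A_i\subseteq M^i$ (which is correct once multiplicativity is secured), this yields $A=R[M]\cong{\rm Sym}_R(M)$.

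For comparison: the paper constructs only the degree-one idempotent, as $\tilde\pi=\theta^{-1}\bar\pi\theta$ on $B_1$ via $F=B_+/{B_+}^2$ --- under the identification $\theta: B_1\to F$ this is exactly your degree-one-component map --- and obtains $M=A_1$ projective just as you do. But it then proves $A\subseteq{\rm Sym}_R(M)$ by a local argument: reduce to $R$ local so that $M$ and its complement $N$ are free, choose coordinates $Y_1,\dots,Y_n$ with $B=R[Y_1,\dots,Y_n]$, and derive a contradiction from $\pi(Y_i)\in{B_+}^2$ for $i>d$ applied to a homogeneous $g\in A\setminus R[Y_1,\dots,Y_d]$. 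Your global graded retraction, once correctly justified, buys a localization-free generation step and the extra information that the retraction may be taken graded; the paper's local-coordinate route avoids having to verify multiplicativity at all.
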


\begin{proof} 
Since $A$ is a graded subring of $B$, we have $A_+= A \cap B_+$ and $A_1= A \cap B_1$.
 Set $F:= B_+/{B_+}^2$. The  quotient map $\eta: B_+\to F$  restricts to an $R$-linear isomorphism
$$
\theta: B_1 \to F
$$  
of free $R$-modules given by 
$$
\theta(X_i)= (X_i+ {B_+}^2)/{B_+}^2  {\rm~~for~~} 1\le i \le n.  
$$
As $A$ is a graded subring $B$, we have $A_1+{B_+}^2=A_++{B_+}^2$ and hence under the isomorphism $\theta$, we have 
\begin{equation}\label{a1}
\theta(A_1)= (A_1+{B_+}^2)/{B_+}^2=(A_++{B_+}^2)/{B_+}^2. 
\end{equation}
Since $\pi(B_+) \subseteq B_+$,  $\pi$ induces an idempotent endomorphism 
$\bar \pi$ of $F$ given by 
$$
\bar{\pi}(g~{\rm mod} {B_+}^2)= \pi(g)~ {\rm mod} {B_+}^2,
$$ 
i.e., $\bar{\pi}$ satisfies $\bar{\pi}\theta=\eta\pi$. 
Hence we have an induced idempotent endomorphism $\tilde{\pi}$ of $B_1$ given by 
$$
\tilde{\pi}= \theta^{-1}\bar{\pi}\theta. {\rm ~i.e.~}, {\theta}\tilde\pi=\bar{\pi}\theta=\eta\pi.
$$
Since $\pi(B_+) \subseteq B_+$, $A_+\subseteq \pi(B_+)$, $A=\pi(B)$ and  $A_+= A \cap B_+$, 
we have $\pi(B_+)=A_+$ and hence, by (\ref{a1}), 
$$
\bar{\pi}(F)=(A_++{B_+}^2)/{B_+}^2=\theta(A_1).
$$  
Let $M= \tilde{\pi}(B_1)$, $N= $ker$(\tilde{\pi})$ and rank$(\tilde{\pi})=$ rank$(\bar{\pi})=d$.
Since $\tilde{\pi}$ is idempotent, $M$ and $N$ are projective $R$-submodules of $B_1$ and $B_1= M \oplus N$. 
Since $\eta\pi(N)={\theta}\tilde\pi(N)=0$, we have $\pi(N) \subseteq {B_+}^2$. 
Further,
$$
M=\tilde{\pi}(B_1)= \theta^{-1}\bar{\pi}\theta(B_1)=\theta^{-1}\bar{\pi}(F)=\theta^{-1} \theta(A_1)=A_1 \subset A.
$$
Hence ${\rm Sym}_R(M) \subseteq A$.
We now prove that $A\subseteq {\rm Sym}_R(M)$. It is enough to prove the statement locally. Thus,  we assume that $R$ is a local ring
and therefore both $M$ and $N$ are free $R$-modules of rank $d$ and $n-d$ respectively, say $M= RY_1\oplus \dots\oplus RY_d$
and $N= RY_{d+1}\oplus \dots\oplus RY_{n}$.    Then, we have  $B=R[Y_1, \dots ,Y_n]$ and $R[Y_1, \dots, Y_d]\subseteq A$. 

Since $M\subseteq A$ and  $\pi(N) \subseteq {B_+}^2$, we have
\begin{equation}\label{yi}
Y_i=\pi(Y_i) {\rm~~if~~} 1\leqslant i\leqslant d ~~{\rm and~~} \pi(Y_i) \in {B_+}^2 {\rm~~if~~} d+1\leqslant i\leqslant n. 
\end{equation}
We now show  that $A\subseteq R[Y_1, \dots ,Y_d]$.
Suppose not. Then there exists a homogeneous polynomial 
$g(Y_1,\dots,Y_n) \in A \setminus R[Y_1, \dots ,Y_d]$. But then $\pi(g)\neq g$ as $\pi(Y_i) \in {B_+}^2$ for all 
$i>d$ by (\ref{yi}), a contradiction. Therefore $A=R[Y_1, \dots ,Y_d]= R^{[d]}$. 
This completes the proof.
\end{proof}

\begin{rem}
{\em
We note that Theorem \ref{gradedr} does not require the retraction map $\pi$ to be a graded homomorphism.
For example, let $A:=k[X] \subseteq B:=k[X,Y]$ be a retract with a retraction map sending $Y$ to a non-constant polynomial  
$f(X)$ which is not homogeneous and $f(0)=0$. Then $A$ is a graded subring of $B$ and 
$\pi(B_+)\subseteq B_+$. However, $\pi$ is not a graded homomorphism. 
}
\end{rem}

As a consequence of Theorem \ref{gradedr}, we have the following result over fields.

\begin{cor}\label{gradedc}
Let $A$ be graded $k$-subalgebra of the polynomial ring 
$B:=k[X_1, \dots ,X_n]$ with standard grading. Suppose that there exists a retraction 
$\pi : B\rar A$ such that $\pi(B_+) \subseteq B_{+}$. Then there exists a matrix $\sigma \in {\rm Gl}_n(k)$
such that $A=k[\sigma(X_1). \dots, \sigma(X_d)]$ for some $d \le n$. In particular, 
$A$ is isomorphic to a polynomial ring over $k$.
\end{cor}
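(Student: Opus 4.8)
The plan is to deduce Corollary \ref{gradedc} as a direct specialization of Theorem \ref{gradedr} to the case $R=k$, and then to translate the structural conclusion (that $A$ is the symmetric algebra of a finitely generated submodule $M$ of $B_1$) into the stated matrix form. First I would apply Theorem \ref{gradedr} with $R=k$ a field: since every $k$-vector space is free and every submodule is a direct summand, the finitely generated projective $R$-submodule $M$ of $B_1$ produced by the theorem is simply a $d$-dimensional $k$-subspace of the $n$-dimensional space $B_1=kX_1\oplus\cdots\oplus kX_n$, and the theorem gives $A={\rm Sym}_k(M)$.

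The next step is purely linear-algebraic. Choose a basis $v_1,\dots,v_d$ of the subspace $M\subseteq B_1$ and extend it to a basis $v_1,\dots,v_n$ of all of $B_1$. Each $v_j$ is a $k$-linear combination of $X_1,\dots,X_n$, so writing $\sigma(X_j):=v_j$ defines a $k$-linear change of the degree-one variables; since $v_1,\dots,v_n$ is a basis, the corresponding coefficient matrix $\sigma\in {\rm Gl}_n(k)$ is invertible. Because the standard grading on $B$ is generated in degree one, this linear substitution extends to a graded $k$-algebra automorphism of $B$, and under it $M=kv_1\oplus\cdots\oplus kv_d=k\,\sigma(X_1)\oplus\cdots\oplus k\,\sigma(X_d)$. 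Consequently
$$
A={\rm Sym}_k(M)=k[\sigma(X_1),\dots,\sigma(X_d)],
$$
which is the asserted form, and in particular $A\cong k^{[d]}$ is a polynomial ring over $k$.

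I do not expect any serious obstacle here, since the hard analytic content—constructing the submodule $M$ with $A={\rm Sym}_R(M)$ and $M=A_1\subseteq B_1$ via the idempotent $\tilde\pi$ on $B_1$—has already been carried out in the proof of Theorem \ref{gradedr}. The only point that requires a word of care is confirming that a basis of the abstract $d$-dimensional space $M$ can be realized concretely as the images $\sigma(X_1),\dots,\sigma(X_d)$ of an invertible linear substitution on the original variables; this is immediate from basis-extension in finite-dimensional vector spaces, so the mild bookkeeping of assembling $\sigma$ is the most involved part of an otherwise routine corollary.
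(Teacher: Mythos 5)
Your proposal is correct and follows essentially the same route as the paper: both deduce the corollary from Theorem \ref{gradedr} by noting that over a field the projective module $M$ (and its complement in $B_1$) is free, and then assemble $\sigma \in {\rm Gl}_n(k)$ from a basis of $B_1$ adapted to $M$. Your explicit basis-extension step is just a slightly more spelled-out version of the paper's appeal to the decomposition $B_1 = M \oplus N$ from the theorem's proof, so there is no substantive difference.
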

\begin{proof}
Since projective modules over a field are free, we may assume as in the proof of Theorem \ref{gradedr}, that
$M= RY_1\oplus \dots\oplus RY_d$ and $N= RY_{d+1}\oplus \dots\oplus RY_{n}$. Thus, there exists a matrix $\sigma \in {\rm Gl}_n(k)$
such that $\sigma (X_i)=Y_i$ for $1\le i \le n$. Hence the result follows from Theorem \ref{gradedr}.
\end{proof}

The following remark summarises the status of Question 3.

\begin{rem}\label{status}
{\em Let $A$ be a retract of $k^{[n]}$. If $\td_k A=0, 1 ~{\rm or}~ n$, then by Theorem \ref{ct2}, 
$A$ is also a polynomial ring over $k$.  If $k$ is a field of characteristic zero and $\td_k A=2$,  
then it follows from Theorem \ref{ftd2} that $A$ is again a polynomial ring. 
We have also seen in Remark \ref{neena} that if $k$ is a field of positive characteristic,  $n\geqslant 4$ and $3 \leqslant \td_k A\leqslant n-1$, then $A$ need not be a polynomial ring. Question 3 therefore reduces to the following two questions.}
\end{rem}

\smallskip
\noindent
{\bf Question 3.1 :} Let $k$ be a field of characteristic zero, $n \geqslant 4$ and $A$
a retract of $k[X_1, \cdots, X_n]$ with $\td_k A \ge 3$. Does it follow that $A$ is a polynomial ring over $k$? 

\smallskip
\noindent
{\bf Question 3.2 :} Let $k$ be a field of positive characteristic,  $n \geqslant 3$ and $A$
a retract of $k[X_1, \cdots, X_n]$ with $\td_k A =2$. Does it follow that $A$ is a polynomial ring over $k$? 

\smallskip

In the context of Question 3, we ask the following weaker question.

\smallskip
\noindent
{\bf Question 4 :} Suppose $A$ is a retract of $k^{[n]}$. 
Does it follow that the field of fractions of $A$ is a purely transcendental extension of $k$? 

\begin{rem} 
{\em Question 4 has an affirmative answer whenever Question 3 has an affirmative answer. 
Over a field of positive characteristic, 
the counterexamples to the Zariski Cancellation Problem (\cite{G} and \cite{G2}) are counterexamples to Question 3 for each $n \geqslant 4$.  
However, in each of the examples in \cite{G} and \cite{G2}, the 
field of fractions are purely transcendental extensions of $k$ and thus are not counterexamples to Question 4.
Therefore, Question 4 is open for any field and any integer $n \geqslant3$.
}
\end{rem}

\section{Miscellaneous results on retracts}

Let $A$ be a ring and $B:=A[X_{1}, X_{2}, \dots, X_{n}]$.
Then $A$ is (trivially) a retract  of $B$ under the natural retraction map which sends each $X_{i}$ 
to $0$. Now for any ideal $\q$ of $B$, contained in $(X_{1}, X_{2}, \dots, X_{n})B$, $A$ is also a 
retract of $B/\q$.  So even if $A$ is a very nice ring, say a polynomial ring over
a field $k$, the ring $B/\q$ can 
be `virtually anything'. Therefore, we do not consider ascent properties under a retraction. Instead, we only 
focus on some nice properties of $B$ and check whether they are preserved under a retraction.
In this section, we record a few results of this type. 
For an ideal $I$ of a ring $R$, we denote the minimal number of generators of $I$ by $\mu(I)$. 
For a local ring $R$, we use ${\m}_{R}$ to 
denote the unique maximal ideal of $R$. 

\smallskip

\begin{lem}\label{retract1} 
Let $A$ be a subring of $B$. If there exists a retraction $\pi:B \rar A$, then the following results hold.
 \begin{enumerate}
  \item [\rm (i)]If $J$ is an ideal of $A$ then $\mu(JB)=\mu(J)$.
  \item [\rm (ii)]For any ideal $\q$ of $B$ with $\q \subseteq \text{Ker }\pi$, $A$ is also a retract of $B/\q$.
  \item [\rm (iii)]Let $\p$ be an ideal of $B$ with $\text{Ker }\pi \subseteq \p$. Then $\pi(\p)=\p\cap A$. If further  
 $\p \in {\rm Spec~}B$, then $A/(\p \cap A)$ is a retract of $B/\p$ and 
 $A_{\p\cap A}$ is a retract of $B_\p$.
  \end{enumerate}
\end{lem}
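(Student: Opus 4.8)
The plan is to prove each of the three parts by directly exploiting the defining property of a retraction, namely that $\pi$ is an $A$-algebra homomorphism (so $\pi|_A = \mathrm{id}_A$) with $\pi^2 = \pi$ and $\pi(B) = A$. For part (i), the inequality $\mu(JB) \le \mu(J)$ is immediate since generators of $J$ over $A$ generate $JB$ over $B$. For the reverse inequality, I would take any generating set $b_1, \dots, b_r$ of $JB$ as an ideal of $B$ and apply $\pi$: since $J \subseteq A$ we have $\pi(JB) = J$ (because $\pi$ fixes $J$ and sends the coefficients from $B$ into $A$), so $\pi(b_1), \dots, \pi(b_r) \in J$ generate $J$ over $A$. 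This gives $\mu(J) \le \mu(JB)$, hence equality.

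For part (ii), given an ideal $\q \subseteq \ker\pi$, the retraction $\pi$ descends to a well-defined $A$-algebra endomorphism $\bar\pi$ of $B/\q$: sending $b + \q \mapsto \pi(b) + \q$ is well-defined precisely because $\q \subseteq \ker\pi$, and one checks it is idempotent with image $(A + \q)/\q \cong A$ (the last isomorphism holding because $A \cap \q \subseteq A \cap \ker\pi = 0$, as $\pi|_A$ is the identity). Thus $A$ is realized as a retract of $B/\q$ via $\bar\pi$.

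For part (iii), the key identity $\pi(\p) = \p \cap A$ when $\ker\pi \subseteq \p$ is the crux. First I would show $\pi(\p) \subseteq \p \cap A$: clearly $\pi(\p) \subseteq A$, and since $\ker\pi \subseteq \p$ we can write any $b \in \p$ as $b = \pi(b) + (b - \pi(b))$ with $b - \pi(b) \in \ker\pi \subseteq \p$, so $\pi(b) = b - (b - \pi(b)) \in \p$; hence $\pi(\p) \subseteq \p \cap A$. The reverse inclusion $\p \cap A \subseteq \pi(\p)$ holds because $\pi$ fixes $A$ pointwise, so any $a \in \p \cap A$ satisfies $a = \pi(a) \in \pi(\p)$. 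Once $\pi(\p) = \p \cap A$ is established, the retract statements for the prime case follow from part (ii) applied after suitable reductions: for $A/(\p \cap A) \hookrightarrow B/\p$, I would note $\p \cap A = \ker(\pi) \cap \p$-adjusted image and invoke Lemma \ref{ct}(ii) (taking the quotient by the ideal $Q = \p \cap A$ of $A$, so $A/Q$ is a retract of $B/QB$, then further quotient by $\p/QB$). Similarly, for the localization statement, I would apply Lemma \ref{ct}(ii) with the multiplicatively closed set $S = A \setminus (\p \cap A)$, giving $A_{\p \cap A} = S^{-1}A$ as a retract of $S^{-1}B$, and then check that $S^{-1}B$ localized further at $\p$ yields $B_\p$ compatibly.

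I expect the main obstacle to be the bookkeeping in part (iii): verifying that the compositions of quotients and localizations from Lemma \ref{ct}(ii) genuinely produce retractions of $B/\p$ and $B_\p$ (rather than of intermediate rings), and that the induced maps remain idempotent $A$-algebra endomorphisms landing in the correct subring. The identity $\pi(\p) = \p \cap A$ itself is elementary once the decomposition $b = \pi(b) + (b - \pi(b))$ is used, but translating it into clean statements about the quotient and localized rings requires care to ensure the hypothesis $\ker\pi \subseteq \p$ is correctly propagated at each stage.
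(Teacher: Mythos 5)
Your proof is correct and follows essentially the same route as the paper: parts (i) and (ii) use exactly the paper's observations ($\pi(JB)=J$ for (i); $\q\cap A\subseteq\pi(\q)=(0)$ giving the inclusion $A\subseteq B/\q$, plus factoring $\pi$ through $B/\q$, for (ii)). For part (iii) the paper simply writes ``Trivial,'' and your decomposition $b=\pi(b)+(b-\pi(b))$, together with the reductions via Lemma \ref{ct}(ii) and the propagation of the hypothesis $\text{Ker }\pi\subseteq\p$, correctly supplies the details the paper omits.
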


\begin{proof}
(i) Since any generating set of $J$ also generates $JB$, $\mu(JB)\leqslant\mu(J)$. Similarly, $\mu(J)\leqslant\mu(JB)$ as 
$\pi(JB)=J$. Thus $\mu(JB)=\mu(J)$.

\smallskip

(ii) Since $\q \cap A \subseteq \pi(\q)=(0)$, we get an inclusion $A \subseteq B/\q$. As 
$\q \subseteq \text{Ker }\pi$, $\pi :B \rar A$ factors through $B/\q$, inducing a retraction $\bar \pi :B/\q \rar A$. 

\smallskip

(iii) Trivial.
\end{proof}

The next example shows that if $A$ is a retract of $B$, then
for an arbitrary prime ideal $\p$ of $B$, $A/(\p \cap A)$ need not be a retract of $B/\p$.

\begin{ex}
{\em
Let $A=k[X]$ and $B=k[X,Y]=A[Y]$, with the retraction map $\pi:B \to A$ being the $A$-algebra map defined by $\pi (Y)= 0$. 
Let $\p=(Y^2-X^3)B$. Then $\p \cap A=(0)$ and $B/\p=A[y]$, where $y^2=X^3$. Here $y \in B/\p$ is algebraic over $A$ but $y \not \in A$.
Hence $A$ is not algebraically closed in $B/\p$. Therefore, by Lemma \ref{retract1} (i),
$A$ cannot be a retract of $B/\p$. 
}
\end{ex}

The next example shows that if $A$ is a retract of $B$, then
for an arbitrary prime ideal $\p$ of $B$, $A_{(\p \cap A)}$ need not be a retract of $B_{\p}$.

\begin{ex}
{\em
Let $A=k[X]$ and $B=k[X,Y,Z]=A[Y,Z]$, with the retraction map 
$\pi:B \to A$ being the $A$-algebra map defined by $\pi(Y)= 0, \pi(Z)=0$.
Let $\p=YB$. Then $A_{(\p \cap A)}=k(X)$ and $B_{\p}=k[X,Y,Z]_{(Y)}$. 
Suppose, if possible, that there exists a retraction $\phi: B_\p \to A_{(\p \cap A)}$. Let $\phi(Z)=f(X)$.
Then $\phi(Z-f(X))=0$, which is not possible as $Z-f(X)$ is a unit in $B_{\p}$. Thus, 
$A_{(\p \cap A)}$ cannot be a retract of $B_{\p}$.
}
\end{ex}

The next result gives conditions under which a prime element of a Noetherian domain remains prime under retraction.

\begin{lem}\label{primee}
 Let $B$ be an integral domain satisfying ascending chain condition on principal ideals and $p$  a prime 
  element of $B$. Let $A$ be a retract of $B$ and $\pi: B \to A$ be a retraction map. 
If $\pi(p)$ is not a unit in $A$ then either $pB \cap A=(0)$ or $pB\cap A=\pi(p)A$. In particular, if 
  $pB \cap A \neq (0)$ then $\pi(p) $ is a prime element of $A$.
\end{lem}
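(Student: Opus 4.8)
The plan is to prove the two inclusions of the dichotomy separately, the easy one holding unconditionally and the harder one requiring the ACCP hypothesis. First I would note that, since $\pi$ is idempotent with image $A$, it restricts to the identity on $A$; hence $pB \cap A$ is an ideal of $A$, and for any $x \in pB \cap A$, writing $x = pb$ with $b \in B$ and applying $\pi$ gives $x = \pi(x) = \pi(p)\pi(b) \in \pi(p)A$. This shows $pB \cap A \subseteq \pi(p)A$ \emph{unconditionally}, so either $pB \cap A = (0)$ or $pB \cap A \ne (0)$. In the latter case the goal reduces to the reverse inclusion, for which it suffices to show $\pi(p) \in pB$: since $\pi(p) \in A$ already, this places $\pi(p)$ in the ideal $pB \cap A$ of $A$, whence $\pi(p)A \subseteq pB \cap A$.

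The heart of the argument is therefore to show that when $pB \cap A \ne (0)$ and $\pi(p)$ is not a unit in $A$, one has $p \mid \pi(p)$ in $B$. I would argue by contradiction, assuming $p \nmid \pi(p)$, via an ACCP descent. Starting from a nonzero $x = p b_0 \in pB \cap A$, applying $\pi$ yields the key relation $p b_0 = \pi(p)\pi(b_0)$; since $p$ is prime and $p \nmid \pi(p)$, it must divide $\pi(b_0)$, say $\pi(b_0) = p b_1$. Cancelling $p$ gives $b_0 = \pi(p) b_1$, and applying $\pi$ once more reproduces a relation of the same shape, $p b_1 = \pi(p)\pi(b_1)$. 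Iterating produces a sequence $(b_i)$ with $\pi(b_i) = p b_{i+1}$ and $b_i = \pi(p) b_{i+1}$ for all $i$. The relations $b_i = \pi(p) b_{i+1}$ give an ascending chain of principal ideals $b_0 B \subseteq b_1 B \subseteq \cdots$, which by ACCP stabilizes, say $b_N B = b_{N+1} B$, so $b_{N+1} = c\, b_N$ for some $c \in B$. Since $b_0 = \pi(p)^i b_i$ shows every $b_i$ is nonzero, cancelling $b_N$ in $b_N = \pi(p) b_{N+1} = \pi(p) c\, b_N$ gives $\pi(p) c = 1$; applying $\pi$ then yields $\pi(p)\pi(c) = 1$ in $A$, making $\pi(p)$ a unit in $A$ and contradicting the hypothesis.

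With $\pi(p) \in pB \cap A$ established, the dichotomy $pB \cap A = \pi(p)A$ in the nonzero case follows at once from the two inclusions. For the final assertion, $pB$ is a prime ideal of $B$, so its contraction $pB \cap A$ to the subring $A$ is a prime ideal of $A$; being equal to the nonzero principal ideal $\pi(p)A$ with $\pi(p)$ a non-unit, this exhibits $\pi(p)$ as a prime element of $A$. (Alternatively I could check primality directly: if $\pi(p) \mid ab$ in $A$ then $ab \in \pi(p)A \subseteq pB$, so $p$ divides one factor in $B$, placing it in $pB \cap A = \pi(p)A$.)

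I expect the main obstacle to be setting up the descent cleanly: verifying that $(b_i)$ is well-defined at each stage, that the principal ideals form an \emph{ascending} chain (so ACCP applies), and that the nonvanishing of the $b_i$ needed for cancellation in the domain is guaranteed throughout. The three hypotheses enter at distinct, essential points of this loop — primality of $p$ drives the divisibility step, ACCP forces stabilization, and the non-unit hypothesis on $\pi(p)$ converts stabilization into the contradiction.
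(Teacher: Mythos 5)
Your proof is correct, and its central mechanism is genuinely different from the paper's. Both arguments start with the same unconditional inclusion $pB \cap A \subseteq \pi(p)A$ (apply $\pi$ to $x = pb$, using that $\pi$ is the identity on $A$). For the reverse inclusion the paper stays inside $A$: it invokes Costa's result that ACCP descends to retracts (Lemma \ref{ct}(iii)), so $A$ is atomic; it then picks an irreducible element $x$ in the nonzero prime ideal $q = pB\cap A$, and since $x \in \pi(p)A$ with $\pi(p)$ a non-unit, irreducibility forces $\pi(p)A = xA \subseteq q$, giving $q = \pi(p)A$ at once. You instead work entirely inside $B$: assuming $p \nmid \pi(p)$, you run the descent $\pi(b_i) = p\,b_{i+1}$, $b_i = \pi(p)\,b_{i+1}$, let ACCP in $B$ force stabilization of the ascending chain $b_0B \subseteq b_1B \subseteq \cdots$, and cancel in the domain to make $\pi(p)$ a unit, a contradiction; this yields $\pi(p) \in pB$, hence $\pi(p)A \subseteq pB \cap A$. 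Your route is more self-contained: it uses ACCP only in $B$, where it is hypothesized, and needs neither the nontrivial fact that ACCP passes to retracts nor the implication ACCP $\Rightarrow$ atomicity. The paper's route is shorter granted those background facts, and it produces the irreducible generator of $pB\cap A$ directly. Both proofs finish identically, reading primality of $\pi(p)$ off the fact that $pB \cap A$ is a prime ideal of $A$; your bookkeeping of where each hypothesis (primality of $p$, ACCP, $\pi(p)$ not a unit) enters is accurate.
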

\begin{proof}
Let $q=pB \cap A$. If $q=(0)$, then we are done. So we assume that $q \neq (0)$. 
Since $B$ satisfies ascending chain condition on principal ideals, so does $A$ and hence, as $q$ is a prime ideal, 
there exists a non-zero irreducible element $x \in q$. Now $q=\pi(q)= pB \cap A \subseteq \pi(pB)=\pi(p)A$ and 
$\pi(p)A$ is a proper ideal.   Hence, by irreducibility of $x$, we have $\pi(p)A= xA$. Therefore, $q= \pi(p)A$
and hence $\pi(p)$ is a prime element of $A$.
\end{proof}

The following example shows that the hypothesis that $\pi(p)$ is not a unit in $A$ is crucial in the above result.

\begin{ex}\label{rprimee1}
 {\em 
Let $A=k[XY,XZ]$, $B=k[X,Y,Z]$
and let $\pi: B \to A$ be the retraction map defined by $\pi(X):=1,\pi(Y):=XY$ and $\pi(Z):=XZ$. Then $X\in B$ is a prime 
element of $B$, but $XB \cap A=(XY,XZ)A$ has height two. Note that here $\pi(X)$ is a unit in $A$.
} 
\end{ex}

The next example shows that in Lemma \ref{primee}, even if $\pi(p)$ is not a unit in $A$, it is possible that 
 $\pi(p)A \neq pB \cap A$ and $\pi (p)$ is not prime in $A$.

\begin{ex}\label{rprimee2}
 {\em 
 Let $A=k[X]_{(X)}$, $B:=k[X,Y]_{(X,Y)}$ and 
 $\pi: B \to A$ be the retraction map defined by $\pi(Y)=0$. Then $Y+X^2$ 
 is a prime element of $B$ whereas $\pi(Y+X^2)=X^2$ is not a prime element of $A$. Here $(Y+X^2)B \cap A=(0)$. 
Thus in general, $\pi(p)A \neq pB \cap A$ for  a prime element $p$ of $B$.

} 
\end{ex}

We shall now describe a few situations where a ring $B$ is faithully flat over its retract $A$. The first result below,
an analogue of Theorem \ref{gradedr}, shows that the retract of a power series ring over  a field $k$ is always a power series ring.

\begin{thm}\label{complocalt}
Let $B:=k[[X_1, \dots ,X_n]]$ be a power series ring in $n$ indeterminates over $k$ and let 
$A$ be a subring of $B$  with a retraction map $\pi:B \rar A$. Then there exists a set of indeterminates $Y_1, \dots,Y_d,\dots,Y_n \in B$ such that $B=k[[Y_1, \dots ,Y_n]]$, 
 $A= k[[Y_1, \dots ,Y_d]]$ and $\pi(Y_i)=0$ for all $i>d$. In particular, $B$ is faithfully flat over $A$. 
\end{thm}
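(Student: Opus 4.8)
The plan is to emulate the proof of Theorem \ref{gradedr}, replacing the grading by the $\m_B$-adic filtration and the cotangent space $B_+/B_+^2$ by $\m_B/\m_B^2$. Throughout, $\pi$ is taken to be a $k$-algebra retraction (the standing convention for retracts over $k$), so $k\subseteq A$ and $\pi$ fixes $A$ pointwise. First I would establish that $\pi$ is a \emph{local} homomorphism and that $A$ is a complete Noetherian local ring. Writing $K:=\ker\pi$, one has $B=A\oplus K$ as $A$-modules with $K$ an ideal of $B$, so $\pi$ induces an isomorphism $B/K\xrightarrow{\sim}A$. As $B$ is local with maximal ideal $\m_B=(X_1,\dots,X_n)$ and $K$ is proper, $B/K$ — hence $A$ — is local with $\m_A=\m_B\cap A=\pi(\m_B)$; being a retract of the Noetherian ring $B$, $A$ is Noetherian (Lemma \ref{ct}(iii)), and being a quotient of the complete ring $B$ it is $\m_A$-adically complete. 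In particular $\pi(\m_B^r)\subseteq\m_A^r\subseteq\m_B^r$, so $\pi$ is $\m_B$-adically continuous.

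Next, set $\m:=\m_B$ and pass to the $k$-vector space $V:=\m/\m^2$ of dimension $n$. Then $\pi$ induces a $k$-linear idempotent $\bar\pi:V\to V$; put $\bar M:=\operatorname{im}\bar\pi=(\m_A+\m^2)/\m^2$, $\bar N:=\ker\bar\pi$ and $d:=\dim_k\bar M$, so $V=\bar M\oplus\bar N$. I would choose $Y_1,\dots,Y_d\in\m_A\subseteq A$ whose residues form a basis of $\bar M$ (so $\pi(Y_i)=Y_i$ for $i\le d$), and lifts $\tilde Y_{d+1},\dots,\tilde Y_n\in\m$ of a basis of $\bar N$. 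Since each such residue lies in $\ker\bar\pi$, we have $\pi(\tilde Y_j)\in\m^2$; replacing $\tilde Y_j$ by $Y_j:=\tilde Y_j-\pi(\tilde Y_j)$ leaves the residue unchanged (as $Y_j-\tilde Y_j\in\m^2$) while arranging $\pi(Y_j)=0$ for $j>d$. The residues of $Y_1,\dots,Y_n$ then form a basis of $V$, so $Y_1,\dots,Y_n$ is a regular system of parameters of $B$; since $B$ is a complete regular local ring with coefficient field $k$, the Cohen structure theorem gives $B=k[[Y_1,\dots,Y_n]]$.

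It remains to identify $A$ with $k[[Y_1,\dots,Y_d]]$. The inclusion $k[[Y_1,\dots,Y_d]]\subseteq A$ holds because each $Y_i\ (i\le d)$ lies in $A$ and $A$ is $\m_A$-adically complete, so every power series in $Y_1,\dots,Y_d$ converges in $A$. For the reverse inclusion, take $f\in A$ and write $f=\sum_\alpha c_\alpha Y^\alpha$ in $B=k[[Y_1,\dots,Y_n]]$. Since $\pi$ is a continuous $k$-algebra map, $\pi(f)=\sum_\alpha c_\alpha\,\pi(Y_1)^{\alpha_1}\cdots\pi(Y_n)^{\alpha_n}$; as $\pi(Y_j)=0$ for $j>d$, every monomial involving some $Y_j\ (j>d)$ dies, whence $\pi(f)=f(Y_1,\dots,Y_d,0,\dots,0)\in k[[Y_1,\dots,Y_d]]$. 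But $f\in A$ forces $\pi(f)=f$, so $f\in k[[Y_1,\dots,Y_d]]$. Thus $A=k[[Y_1,\dots,Y_d]]$ and $B=A[[Y_{d+1},\dots,Y_n]]$; the latter is flat over the Noetherian ring $A$, and the structure map $A\hookrightarrow B$ splits via $\pi$, so $\operatorname{Spec}B\to\operatorname{Spec}A$ is surjective, giving faithful flatness.

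I expect the main obstacle to be the passage from the residues of $Y_1,\dots,Y_n$ to the global identity $B=k[[Y_1,\dots,Y_n]]$: this is where completeness and the coefficient-field form of Cohen's theorem genuinely enter, replacing the elementary linear-algebra change of variables used in Theorem \ref{gradedr}. The secondary point requiring care — that $\pi$ commutes with infinite sums, used decisively in the last paragraph — is resolved once $\pi$ is shown to be local, hence $\m$-adically continuous; with these two topological ingredients in place the rest is a faithful transcription of the graded argument.
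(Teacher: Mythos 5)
Your proof is correct and follows essentially the same route as the paper's: both pass to the idempotent $\bar\pi$ induced on $\m/\m^2$, lift its image/kernel decomposition to elements $Y_1,\dots,Y_n$ generating $\m$ with $\pi(Y_i)=Y_i$ for $i\leqslant d$ and $\pi(Y_i)=0$ for $i>d$, and invoke the Cohen structure theorem to conclude $B=k[[Y_1,\dots,Y_n]]$. The only differences are minor: where the paper identifies $A=k[[Y_1,\dots,Y_d]]$ by noting that $A$ is a complete regular local ring (Lemma \ref{ct}) with maximal ideal $\pi(\m)=(Y_1,\dots,Y_d)A$, you verify the two inclusions directly using completeness of $A$ and the $\m$-adic continuity of $\pi$; and your standing assumption that $\pi$ is a $k$-algebra retraction (equivalently, that $k\subseteq A$) is precisely what the paper's final step also uses implicitly.
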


\begin{proof} 
By Lemma \ref{ct}, $A$ is an equicharacteristic complete regular local ring with a residue field $k$. 
Let $\m= (X_1, \dots, X_n)$ be the maximal ideal of $B$ and $\n= \pi(\m)$. Then $\n$ is the maximal ideal of $A$.  
Since $B$ is a local ring, the retraction map $\pi$ induces an idempotent 
endomorphism, say $\bar \pi$, of the $n$-dimensional $k$-vector space 
$\m/\m^2$ with $\bar{\pi}(\m)=\n/(\n \cap \m^2)= (\n+\m^2)/\m^2$. Let $d=$ rank $\bar \pi$.
Since $\bar{\pi}$ is an idempotent endomorphism, there exist $Z_1, \dots, Z_n \in \m$ such that 
$\m=(Z_1, \dots, Z_n)$, 
$$
Z_i-\pi(Z_i) \in \m^2 {\rm ~if~} 1\leqslant i\leqslant d {\rm ~ and~} \pi(Z_i)\in \m^2 {\rm ~if~} d+1\leqslant i\leqslant n.
$$
Since $B$ is a complete local ring, we then have $B=k[[Z_1, \dots, Z_n]]$ (cf. \cite[proof of Theorem 29.4]{M}).  
Let $Y_1, \dots ,Y_n \in B$ be defined by 
$$
Y_i:=\pi(Z_i) {\rm ~if~} 1\leqslant i\leqslant d {\rm ~ and~} Y_i:=Z_i-\pi(Z_i) {\rm ~if~} d+1\leqslant i\leqslant n.
$$
Then  $\pi(Y_i)= Y_i$ for $1\leqslant i\leqslant d $ and $\pi(Y_i)=0$ for $d+1 \leqslant i\leqslant n$ and  
$(Z_1, \dots, Z_n)+\m^2=(Y_1, \dots, Y_n)+\m^2$. Hence $\m= (Y_1, \dots, Y_n)$ and $\n=\pi(\m)=(Y_1, \dots, Y_d)$.
Hence, as both $B$ and $A$ are complete
regular local rings, we have  $B=k[[Y_1,\dots ,Y_n]]$ and $A=k[[Y_1, \dots ,Y_d]]$. 
\end{proof}

In the above theorem, $B$ is a complete regular equicharacteristic Noetherian local ring. The next result shows that
faithful flatness is preserved even when $B$ is not complete.

\begin{prop}\label{regular}
Let $B$ be an equicharacteristic Noetherian regular local ring and $A$ a retract of $B$. Then $B$ is faithfully flat over 
$A$.
\end{prop}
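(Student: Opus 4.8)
The plan is to reduce to the complete case settled in Theorem \ref{complocalt} and then to descend faithful flatness along the completion maps.

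First I would check that $A$ is a regular local ring and that $A\hookrightarrow B$ is a local homomorphism. Let $\pi:B\to B$ be the idempotent endomorphism with $\pi(B)=A$ and let $\m_B$ be the maximal ideal of $B$. The key point is that $\pi$ is local, that is, $\pi(\m_B)\subseteq\m_B$: if some $x\in\m_B$ had $\pi(x)\notin\m_B$, then $\pi(x)$ would be a unit of $B$ lying in $A$, hence a unit of $A$ (its inverse is fixed by $\pi$), so that $z:=\pi(x)^{-1}x\in\m_B$ would satisfy $\pi(z)=1$; then $1-z$ would be a unit of $B$ with $\pi(1-z)=0$, contradicting $\pi(1-z)\,\pi((1-z)^{-1})=\pi(1)=1$. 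Granting this, $\m_B\cap A=\pi(\m_B)$, and the idempotent induced by $\pi$ on the residue field $B/\m_B$ has image $A/(\m_B\cap A)$, which is therefore a field; since every non-unit of $A$ must lie in $\m_B$, the ring $A$ is local with maximal ideal $\m_A=\m_B\cap A$ and $A\hookrightarrow B$ is local. By Lemma \ref{ct}(v), $A$ is regular, hence a regular local ring, and it is equicharacteristic because $A/\m_A\hookrightarrow B/\m_B$.

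Next I would pass to the $\m_A$-adic and $\m_B$-adic completions. As $\pi$ and the inclusion $\iota:A\hookrightarrow B$ are local homomorphisms, they extend to $\hat\pi:\hat B\to\hat A$ and $\hat\iota:\hat A\to\hat B$ with $\hat\pi\hat\iota=\mathrm{id}_{\hat A}$ (this being the completion of $\pi\iota=\mathrm{id}_A$); thus $\hat A$ is a retract of $\hat B$. Since $\hat B$ is a complete equicharacteristic regular local ring, Cohen's structure theorem gives $\hat B=k[[X_1,\dots,X_n]]$ with $k=B/\m_B$, so Theorem \ref{complocalt} applies to the retract $\hat A$ of $\hat B$ and shows that $\hat B$ is faithfully flat over $\hat A$. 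Then I would descend: the composite $A\to\hat A\to\hat B$ is flat, being the completion map $A\to\hat A$ followed by the flat map just obtained, and by commutativity of the square relating $\iota$ and $\hat\iota$ it coincides with $A\to B\to\hat B$. Thus in the tower $A\to B\to\hat B$ the outer map is flat while $B\to\hat B$ is faithfully flat; hence $A\to B$ is flat, for if $M\hookrightarrow N$ is an injection of $A$-modules, then applying the faithfully flat functor $-\otimes_B\hat B$ to $M\otimes_A B\to N\otimes_A B$ yields $M\otimes_A\hat B\to N\otimes_A\hat B$, which is injective because $A\to\hat B$ is flat, so the original map is injective too. Finally, $A\hookrightarrow B$ is a flat local homomorphism with $\m_A B\subseteq\m_B\neq B$, and is therefore faithfully flat, which is the assertion.

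The genuine obstacle, I expect, is the first step: one must show that the abstract retraction is a local homomorphism and that $A$ is truly local, so that completion is meaningful and Theorem \ref{complocalt} becomes applicable. Once the situation has been transported to the complete level the descent of flatness is a routine instance of faithfully flat base change (alternatively one could invoke the flatness criterion of Lemma \ref{akl}), and faithful flatness then follows formally from locality.
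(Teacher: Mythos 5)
Your proof is correct and takes essentially the same route as the paper's: establish that $A$ is local and the maps are local homomorphisms, pass to the $\m$-adic completions so that $\hat{A}$ becomes a retract of $\hat{B}\cong L^{[[n]]}$, apply Theorem \ref{complocalt}, and then descend flatness along the faithfully flat map $B \to \hat{B}$, concluding by locality. The only difference is one of detail: you prove directly that the retraction and inclusion are local homomorphisms, whereas the paper dispatches this by noting that $A$, being a quotient of the local ring $B$, is local with the same residue field.
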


\begin{proof}
 Let $\iota:A \rar B$ and $\pi:B \rar A$ be the natural inclusion and the retraction map respectively. Since $A$ is a quotient ring of $B$,
$A$ is a local ring and $A/{\m}_{A}=B/{\m}_{B}$. If $\hat A$  and $\hat B$ denote the completion of $A$ and $B$ respectively, 
 then we have induced maps of complete local rings $\hat \iota:\hat A \rar \hat B$ and 
 $\hat \pi : \hat B \rar \hat A$. 
 \[
\begin{tikzcd}
A \arrow[r, "\iota"]\arrow[d] & B \arrow[r, "\pi"] \arrow[d] & A \arrow[d] \\
 \hat{A}\arrow[r, "\hat{\iota}"]&\hat{B}\arrow[r,"\hat{\pi}"] & \hat{A}
\end{tikzcd}
 \]
 Since $\pi\circ \iota =id_A$, $\hat \pi \circ \hat \iota=id_{\hat A}$ which implies that 
 ${\hat \iota} :\hat A \rar \hat B$ is injective and $\hat A \subseteq \hat B$ is a retract. Since $\hat B$ is an equicharacteristic 
 complete regular local ring, $\hat B\cong L^{[[n]]}$ where $L=B/{\m}_{B}$ and $n:=\text{dim }B$. 
 Therefore by Theorem \ref{complocalt},  $\hat B$ is faithfully flat over $\hat A$.
 Since $\hat A$ is faithfully flat over $A$, it follows that $\hat B$ is faithfully flat over $A$.   Since $\hat B$ is faithfully flat over both $A$ and $B$, it follows that $B$ is faithfully flat over $A$.
\end{proof}

\begin{cor}\label{ffl}
 Let $A$ be a subring of a polynomial ring $B:=k^{[n]}$ for which there exists a retraction $\pi:B\rar A$. Then, 
 for any maximal ideal $\m$ of $ B$ containing  $\text{ker }\pi$,  $B_\m$ is  
 faithfully flat over $A_{\m\cap A}$.
\end{cor}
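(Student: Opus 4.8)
The plan is to reduce Corollary~\ref{ffl} directly to Proposition~\ref{regular}, by verifying that the two hypotheses of that proposition hold for the localized pair $A_{\m\cap A}\subseteq B_\m$. I do not expect any genuine obstacle here, since both ingredients are already in place; the work is entirely in checking that the global data localize correctly.

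First I would record that $B_\m$ is an equicharacteristic Noetherian regular local ring. Indeed, $B=k^{[n]}$ is a regular Noetherian ring, so its localization $B_\m$ at the maximal ideal $\m$ is again Noetherian, local, and regular; and since $k\subseteq B_\m$, the characteristic of $B_\m$ agrees with that of its residue field, so $B_\m$ is equicharacteristic. This gives the first hypothesis of Proposition~\ref{regular}.

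Next I would establish that $A_{\m\cap A}$ is a retract of $B_\m$. This is precisely where the assumption $\text{ker }\pi\subseteq\m$ is used: since $\m$ is a prime (indeed maximal) ideal of $B$ containing $\text{ker }\pi$, Lemma~\ref{retract1}(iii) applies verbatim and yields that $A_{\m\cap A}$ is a retract of $B_\m$. This gives the second hypothesis.

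With both hypotheses verified, I would apply Proposition~\ref{regular} with $B_\m$ in place of $B$ and $A_{\m\cap A}$ in place of $A$, concluding that $B_\m$ is faithfully flat over $A_{\m\cap A}$. The only step that requires a moment's care is the passage from the global retraction $\pi:B\to A$ to a retraction on the localizations, but this is handled cleanly by Lemma~\ref{retract1}(iii), so the proof is essentially a combination of that lemma with Proposition~\ref{regular}.
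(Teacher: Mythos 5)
Your proof is correct and follows exactly the paper's argument: Lemma~\ref{retract1}(iii) gives the induced retraction $B_\m \to A_{\m\cap A}$ from the hypothesis $\text{ker}\,\pi \subseteq \m$, and Proposition~\ref{regular} then yields faithful flatness since $B_\m$ is an equicharacteristic Noetherian regular local ring. Your only addition is the explicit (and correct) verification that $B_\m$ satisfies the hypotheses of Proposition~\ref{regular}, which the paper leaves implicit.
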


\begin{proof}
Let $\m$ be a maximal ideal of $B$ such that $\text{ker }\pi \subseteq \m$. Then by Lemma \ref{retract1} (iii),
$A_{\m\cap A}$  is also a retract of  $B_\m$ under the induced map. The result now follows from Proposition \ref{regular}.
\end{proof}

The following example shows that, in general, a polynomial ring $B=k^{[n]}$  need not be faithfully flat over its retract, 
that the hypothesis that $\text{ker }\pi \subseteq \m$ is necessary in Corollary \ref{ffl}, and that the going-down property
may not hold for an extension $A \subset B$ with $A$ being a retract of $B$, even when $B=k^{[n]}$.

\begin{ex}\label{nonflat}
{\em  
Consider the polynomial ring $B=k[X,Y,Z]$ and its subring $A=k[XY,XZ]$. Then $A$ is a retract of $B$ with the retraction map 
$\pi : B \rar A$ defined by 
$\pi(X):=1$, $\pi(Y):=XY$ and $\pi(Z):=XZ$. But $B$ is not faithfully flat over $A$ as $XB\cap A= (XY, XZ)A$ leads to the failure 
of the going-down property. 
 
For ${\m}=(X,Y,Z)B$ and $\n= \m \cap A$, we see that $\text{ker }\pi \nsubseteq \m$ and $A_{\n}$ is not a retract of $B_\m$.
 
}
\end{ex}

In the above example, $\td_{k}A=2$.  The next result shows that if $B=R^{[n]}$ over a Noetherian domain $R$ and if $A$ is
a retract of $B$ with $\td_{R}A=1$, then $B$ is faithfully flat over its retract $A$.

\begin{thm}\label{fftd1}
Let $R$ be a Noetherian domain, $B=R^{[n]}$ and $A$ a retract of $B$ such that $\td_{R}A=1$.
Then $B$ is faithfully flat over $A$.
\end{thm}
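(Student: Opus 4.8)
The plan is to prove that $B$ is flat over $A$ and that the induced map $\Sp(B)\to\Sp(A)$ is surjective; together these give faithful flatness. Surjectivity is immediate from the retraction $\pi:B\to A$: since $\pi|_A=\Id_A$, for any prime $\q$ of $A$ the ideal $\pi^{-1}(\q)$ is a prime of $B$ contracting to $\q$. Hence the whole problem reduces to flatness, which I would check locally on $B$: it suffices to show that for every prime $\n$ of $B$, writing $\q=\n\cap A$ and $\p=\n\cap R$, the local ring $B_\n$ is flat over $A_\q$.

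For this I would apply Lemma \ref{akl} to the chain of local homomorphisms $R_\p\to A_\q\to B_\n$ of Noetherian local rings (note $A$ is Noetherian by Lemma \ref{ct}(iii)). First, $B=R^{[n]}$ is free, hence faithfully flat, over $R$, so $A$ is faithfully flat over $R$ by Lemma \ref{ff}(ii); in particular $A_\q$ is flat over $R_\p$ and $B_\n$ is flat over $R_\p$, which is hypothesis (i) of Lemma \ref{akl}. It then remains to verify hypothesis (ii): that the fibre $B_\n\otimes_{R_\p}\kappa(\p)$ is flat over $A_\q\otimes_{R_\p}\kappa(\p)$.

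To analyse the fibre I would invoke Theorem \ref{ctd1}: since $A$ is a retract of $B=R^{[n]}$ with $\td_R A=1$, $A$ is an $\A^1$-fibration over $R$, so $A\otimes_R\kappa(\p)=\kappa(\p)^{[1]}$, while $B\otimes_R\kappa(\p)=\kappa(\p)^{[n]}$, and the former is a retract of the latter by Lemma \ref{ct}(ii). Now $A_\q\otimes_{R_\p}\kappa(\p)$ is a localization of $\kappa(\p)^{[1]}$ at a prime, hence a DVR or a field, and in either case a PID; and $B_\n\otimes_{R_\p}\kappa(\p)$ is a (nonzero) localization of the domain $\kappa(\p)^{[n]}$, hence a torsion-free module over that PID, since the injection of fibres localizes to an injection into a domain. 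A torsion-free module over a PID is flat, so hypothesis (ii) holds. Lemma \ref{akl} then yields that $B_\n$ is flat over $A_\q$, and letting $\n$ range over all primes of $B$ gives that $B$ is flat over $A$.

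I expect the only real subtlety to be the fibre step: correctly identifying $A_\q\otimes_{R_\p}\kappa(\p)$ and $B_\n\otimes_{R_\p}\kappa(\p)$ as localizations of $A\otimes_R\kappa(\p)$ and $B\otimes_R\kappa(\p)$ respectively, and confirming that the reduction to ``torsion-free over a PID implies flat'' applies. Everything else — surjectivity of spectra, faithful flatness of $A$ over $R$, and the passage from local to global flatness — is routine.
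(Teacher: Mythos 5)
Your proposal is correct and follows essentially the same route as the paper: surjectivity of $\operatorname{Spec} B \to \operatorname{Spec} A$ from the retraction, reduction to local flatness, identification of the fibre $A\otimes_R\kappa(\p)$ as $\kappa(\p)^{[1]}$ via Theorem \ref{ctd1}, a torsion-free-over-a-PID argument for the fibre extension, and Lemma \ref{akl} to conclude. The only (harmless) difference is that you run the PID/torsion-free argument directly on the localized fibres $A_\q\otimes_{R_\p}\kappa(\p)\to B_\n\otimes_{R_\p}\kappa(\p)$, whereas the paper establishes flatness of $B\otimes_R\kappa(\p)$ over $A\otimes_R\kappa(\p)$ and then localizes.
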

\begin{proof}
Since $A$ is a retract of $B$, the induced map $\text{Spec }B \rightarrow \text{Spec }A$ is surjective. So all we need is to 
show that $B$ is flat over $A$. By (\cite[3.J]{MA}), it is enough to prove that $B$ is locally flat over $A$.
Let $\q$ be a prime ideal of $B$, $\p':=\q \cap A$ and $\p:=\p'\cap R$. Then we get the following local homomorphisms: 
$$
 R_{\p} \longrightarrow A_{\p'} \longrightarrow B_{\q} .
$$
By (i), $A \otimes_{R} \kappa(\p)={\kappa(\p)}^{[1]}$, i.e., a PID. Since $B \otimes_{R} \kappa(\p)$ is a 
torsion-free module over the PID $A \otimes_{R} \kappa(\p)$, it is flat over 
$A \otimes_{R} \kappa(\p)$. 
Therefore, it follows that $B_{\q} \otimes_{R_{\p}} \kappa(\p)$ is flat over $A_{\p'} \otimes_{R_{\p}} \kappa(\p)$. 
Since $B=R^{[n]}$, $B_{\q}$ is flat over $R_{\p}$. Therefore, by Lemma \ref{akl}, 
$B_{\q}$ is flat over $A_{\p'}$. 
\end{proof}

Example \ref{nonflat} shows that going-down property may fail under a retraction.
It is easy to see that going-up property too may fail under a retraction from $B$ to $A$, 
even when $B=k^{[n]}$, as the following well-known example shows (cf. \cite[p. 37]{MA}).

\begin{ex}
{\em
Let $A=k[X]$, $ B=k[X, Y]$  and $\pi: B \to A$ be the retraction map sending $Y$ to $0$. Then the prime
ideal $\p:=(XY-1)B$ contracts to $(0)$ in $A$ but there does not exist any prime ideal $\q$ of $B$ containing $\p$ which lies over 
the prime ideal $XA$. 
}
\end{ex}

If $A$ is the kernel of a locally nilpotent derivation on $B$, then any field $L$ contained in $B$ is also contained in $A$.
However, the following example shows that, in general, if $A$ is a retract of $B$ and $B$ contains a field $L$ then $L$ may not be contained in $A$.

\begin{ex}\label{rf}
{\em 
Let $A=k(X)$ and $B:=k[X,Y]_{(Y)}$. Then $A$ is a retract of $B$ under the retraction map sending $Y$ to $0$. 
The field $L=k(X+Y)$ is contained in $B$, but not in $A$.
}
\end{ex}

The following lemma gives a criterion for a retract $A$ of a ring $B$ to contain every field which is contained in $B$. 

\begin{lem}\label{retract2}
 Let $A$ be a subring of a commutative ring $B$ and let $\pi:B\rar A$ be a retraction. If the set of fields contained 
 in $B$ forms a directed set under set inclusion, i.e., for any two fields $F_1,F_2 \subseteq B$ there exists another field 
 $E \subseteq B$ such that $F_1\cup F_2 \subseteq E$, then $B$ contains a largest field, say $K$, and $K \subseteq A$. 
 In particular, if $k$ is a field and $A$ is a retract of $k^{[n]}$, then $k \subseteq A$.
\end{lem}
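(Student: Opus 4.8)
The plan is to construct the largest field explicitly as the union of all subfields of $B$, and then to show that the retraction $\pi$ fixes it pointwise. Assuming, as the statement implicitly requires, that $B$ contains at least one subfield, I would set $K := \bigcup F$, the union taken over all subfields $F$ of $B$. To see that $K$ is itself a field, I would take $a,b \in K$ with $b \neq 0$, choose subfields $F_1, F_2 \subseteq B$ with $a \in F_1$ and $b \in F_2$, and invoke the directedness hypothesis to obtain a subfield $E \subseteq B$ with $F_1 \cup F_2 \subseteq E$. Then $a - b$ and $ab^{-1}$ (the inverse being computed in $E$, where it is also an inverse in $B$) both lie in $E \subseteq K$. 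Since $0, 1 \in K$, this shows $K$ is a subfield of $B$, and by construction it contains every subfield of $B$, so it is the largest one.

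Next I would analyze the restriction $\pi|_K$. Because $K$ is a field and $\pi(1) = 1 \neq 0$ in $A \subseteq B$, the ring homomorphism $\pi|_K : K \to B$ is nonzero, hence injective; therefore its image $\pi(K)$ is a subfield of $B$ isomorphic to $K$ and contained in $\pi(B) = A$. By maximality of $K$ we get $\pi(K) \subseteq K$, so $\pi$ restricts to an injective endomorphism $\phi := \pi|_K : K \to K$. The key observation is that $\phi$ is idempotent: for $x \in K$ we have $\pi(x) \in K$, whence $\phi(\phi(x)) = \pi(\pi(x)) = \pi(x) = \phi(x)$, using $\pi^2 = \pi$. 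An injective idempotent self-map must be the identity, since applying injectivity to $\phi(\phi(x)) = \phi(x)$ yields $\phi(x) = x$ for all $x$. Hence $\phi = \mathrm{id}_K$, which gives $K = \pi(K) \subseteq A$, establishing the main assertion.

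For the ``in particular'' statement with $B = k^{[n]}$, I would note that the units of $B$ are exactly $k^{*}$, so any subfield $F \subseteq B$ satisfies $F \setminus \{0\} \subseteq B^{*} = k^{*}$, forcing $F \subseteq k$; since $k$ is itself a subfield of $B$, the collection of subfields is directed (every pair lies inside $k$) and $K = k$. The first part then yields $k = K \subseteq A$.

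I expect the step requiring the most care to be the idempotency-plus-injectivity argument identifying $\phi$ with the identity, and in particular the verification that $\pi$ carries $K$ into $K$ so that $\phi$ is a genuine self-map of $K$; by contrast, the union construction of $K$ and the computation $B^{*} = k^{*}$ for the polynomial ring are routine.
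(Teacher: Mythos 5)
Your proof is correct, and it takes a genuinely different route from the paper's at both stages. For the existence of the largest field, you build $K$ explicitly as the union of all subfields and verify closure using directedness, whereas the paper invokes Zorn's lemma (with directedness then upgrading a maximal subfield to a largest one); your construction avoids Zorn entirely. More substantially, for the inclusion $K \subseteq A$ the paper argues by contradiction: it sets $L := K \cap A$, notes $L$ is a field (non-units of $A$ stay non-units in $B$, Lemma \ref{ct}(i)), picks $t \in K \setminus L$ with $s = \pi(t)$, observes that $\pi$ carries the field $L(t)$ isomorphically onto $L(s) \subseteq A$, and applies directedness a \emph{second} time to find a field $E \supseteq L(s) \cup L(t)$, in which $s - t$ is a nonzero element, hence a unit of $B$ --- yet $\pi(s-t) = s - s = 0$, which is absurd. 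Your route instead notes that $\pi(K)$ is a subfield of $B$ (injectivity of $\pi|_K$, since a nonzero homomorphism out of a field is injective), hence $\pi(K) \subseteq K$ by largeness of $K$, and then that the injective idempotent self-map $\pi|_K$ must be the identity, so $\pi$ fixes $K$ pointwise. Both arguments ultimately rest on the same fact --- a unital homomorphism cannot kill a unit --- but yours uses directedness only once, dispenses with Zorn and the element chase, and is arguably cleaner, while the paper's exhibits a concrete witness ($s-t$) to the contradiction. You also supply the verification $B^{*} = k^{*}$ underlying the ``in particular'' clause, a detail the paper leaves implicit; your handling of the degenerate case (assuming $B$ has at least one subfield) is appropriate, since otherwise the assertion is vacuous.
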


\begin{proof}
 The first assertion that $B$ contains a largest field $K$ follows from {\it Zorn's lemma}. Let $L:=K \cap A$. Then $L$ 
 is a field since any non-unit in $A$ remains a non-unit in $B$ (cf. Lemma \ref{ct}(i)). 
 
 If possible suppose
 $L \neq K$ and let $t \in K \setminus L$ and  $s=\pi(t)$. Then $\pi$ induces an isomorphism of the fields $L(s)$ and $L(t)$, 
 where $L(s) \subseteq A$. 
 By our hypothesis, there exists a field  $E\subseteq B$ containing both $L(s)$ and $L(t)$.  Note that $s-t \neq 0$ but $\pi(s-t) =0$. 
 This is a contradiction since $s-t$ is a unit in $E$ and therefore also in $B$. 
\end{proof}

We have seen (Lemma \ref{ct}) that the property of being a UFD or a regular ring or a normal domain is preserved under retractions. 
It is also easy to see that a retract $A$ of a seminormal domain $B$ is also a seminormal domain
as $A= B \cap Q(A)$. However, the following example shows that a retract of a Cohen-Macaulay ring may not be Cohen-Macaulay. 
\begin{ex}
{\em Let 
\[
A:=\left(\dfrac{k[X,Y]}{(X^2,XY)}\right)_{(X,Y)} \text{~and~} B:=\left(\dfrac{k[X,Y,Z]}{(X^2,XY,YZ)}\right)_{(X,Y,Z)}.
\] 
Let $\pi: B\to A$ be the retraction map defined by $\pi(z)=0$. Then $B$ is a Cohen-Macaulay 
ring since $B$ is one-dimensional and $y+z$ is a $B$-regular element. However, $A$ is not a Cohen-Macaulay ring
as $\dim A=1$, but ${\rm depth}~ A=0$.
}
\end{ex}

The following example shows that a retract of a Gorenstein ring may not be Gorenstein.

\begin{ex}
{\em  
Let
 \[A:=\dfrac{k[X,Y]}{(X^2,Y^2,XY)} \text{~and ~} B:=\dfrac{A[Z,W]}{(Z^2,W^2,ZW,xW,yZ,xZ-yW)}.\]
 Let $\pi:B \to A$  be a retraction map defined by  $\pi(z)=\pi(w)=0$. Then the vector space dimension of $A$ (over $k$) is $3$ 
 and that of $B$ is $6$. Now $A$ is not a Gorenstein ring 
since $\text{ann}_A~x=\text{ ann}_A~y=\m_A$. However, one can check that $xz$ is the only element in $B$, up to units, whose 
annihilator is equal to $\m_B$, implying that $B$ is Gorenstein.
}
\end{ex}

{\bf Acknowledgements.} 
The fourth author acknowledges Department of Science and Technology for their SwarnaJayanti Fellowship.

\bibliographystyle{amsplain}

\begin{thebibliography}{12}
\bibitem{As} T. Asanuma,
{\it Polynomial fibre rings of algebras over Noetherian rings},
Invent. Math. {\bf 87} (1987) 101--127. 

\bibitem{BCW} H. Bass, E. H. Connell and D. L. Wright, 
{\it Locally polynomial algebras are symmetric algebras}, 
Invent. Math. {\bf 38} (1977) 279--299. 


\bibitem{BD} S. M. Bhatwadekar and A. K. Dutta, {\it On affine fibrations},
Commutative Algebra (Trieste, 1992), World Sci. Publ., River Edge, NJ (1994) 1--17.

\bibitem{BD2} S. M. Bhatwadekar and A. K. Dutta,
{\it Kernel of locally nilpotent $R$-derivations of $R[X,Y]$},
Trans. Amer. Math. Soc. {\bf 349(8)} (1997) 3303--3319.

\bibitem{C}
D. L. Costa, {\it Retracts of polynomial rings},  J. Algebra {\bf 44(2)} (1977) 492--502.


\bibitem{Cr} 
A. J. Crachiola, 
{\it The hypersurface $x + x^2y + z^2 + t^3 = 0$ over a field of arbitrary
characteristic}, Proc. Amer. Math. Soc. {\bf 134(5)} (2005) 1289--1298.


\bibitem{DD} P. Das and A. K. Dutta,
{\it On codimension-one ${\A}^{1}$-fibration with retraction},
J. Commut. Algebra {\bf 3(2)} (2011) 207--224. 

\bibitem{DO} A. K. Dutta and N. Onoda, {\it Some results on codimension-one $\A^1$-fibrations}, 
J. Algebra {\bf 313(2)} (2007) 905--921.  
 
\bibitem{DGO} A. K. Dutta, N. Gupta and N. Onoda, 
{\it Some patching results on
           algebras over two-dimensional factorial domains}, 
					 J. Pure Appl. Algebra {\bf 216} (2012)  1667--1679.


\bibitem{E} P. Eakin, {\it A note on finite dimensional subrings of polynomial rings}, Proc. Amer. Math. Soc. {\bf 31} 
(1972) 75--80.
 
 \bibitem{EH} P. Eakin and W. Heinzer,
 {\it A cancellation problem for rings},
 Conference on Commutative Algebra (Univ. Kansas, Lawrence, Kan., 1972), eds. J. W. Brewer and E. A. Rutter,  
 Lecture notes in Math., Vol. {\bf 311}, Springer, Berlin (1973) 61--77.

\bibitem{Fu} T. Fujita, {\it On Zariski problem}, Proc. Japan Acad. (A) {\bf 55} (1979) 106--110.

\bibitem{Gr} C. Greither, {\it Seminormality, Projective Algebras, and Invertible Algebras}, J. Algebra {\bf 70} (1981) 316--338.

\bibitem{G} N. Gupta, {\it On the cancellation problem for the affine space $\A^3$
in characteristic $p$}, Invent. Math. {\bf 195} (2014) 279--288.  

\bibitem{G2}
N. Gupta, {\it On Zariski's cancellation problem in positive characteristic}, Adv. Math. {\bf 264} (2014) 296--307.

\bibitem{LS} Dayan Liu and Xiaosong Sun, {\it A class of retracts of polynomial algebras},
J. Pure and Appl. Algebra, {\bf 222} (2018) 382--386. 

\bibitem{MA} H. Matsumura,
{\it Commutative Algebra}, Second Edition,
Mathematics Lecture Notes Series, Benjamin/Cummings Publishing Company, Inc (1980).

\bibitem{M} H. Matsumura,
{\it Commutative Ring Theory}, 
Cambridge Studies in Advanced Mathematics, {\bf 8}. Cambridge University Press, Cambridge, (1986). xiv+320.

\bibitem{MS} M. Miyanishi and T. Sugie, {\it Affine surfaces containing cylinderlike open sets}, 
J. Math. Kyoto Univ. {\bf 20} (1980) 11--42. 

\bibitem{T}
T. Nagamine, {\it A note on retracts of polynomial rings in three variables}, 
J. Algebra, {\bf 534} (2019) 339--343.

\bibitem{R} 
P. Russell, {\it On affine-ruled rational surfaces}, Math. Annalen {\bf 255} (1981) 287--302. 

\bibitem{RS}
P. Russell and A. Sathaye, {\it On Finding and Cancelling Variables in k[X, Y, Z]}, J. Algebra
{\bf 57(1)} (1979) 151--166.

\bibitem{S}
A. Sathaye, {\it Polynomial ring in two variables over a DVR: a criterion}, Invent. Math. {\bf 74(1)} (1983) 159--168.

\bibitem{SUS} A. A. Suslin,
{\it Locally polynomial rings and symmetric algebras (Russian)},
Izv. Akad. Nauk SSSR Ser. Mat. {\bf 41(3)} (1977) 503--515. 

\end{thebibliography}

\end{document}